\documentclass[table]{amsart}
\usepackage[margin=1.1in]{geometry}
\usepackage{amscd,amsmath,amsxtra,amsthm,amssymb,xr,mathrsfs,mathtools,enumerate,orcidlink}
\usepackage{tikz}
\usetikzlibrary{arrows.meta,positioning}

\usetikzlibrary{calc}
\usepackage{tikz-3dplot}
\usepackage{stmaryrd}
\usepackage{multirow}
\usepackage{xcolor}
\usepackage{commath}
\usepackage{comment}
\usepackage{svg}
\usepackage{graphics}
\usepackage{longtable} 
\usepackage{pdflscape} 
\usepackage{booktabs}
\usepackage{hyperref}
\definecolor{vegasgold}{rgb}{0.77, 0.7, 0.35}
\definecolor{darkgoldenrod}{rgb}{0.72, 0.53, 0.04}
\definecolor{gold(metallic)}{rgb}{0.83, 0.69, 0.22}
\hypersetup{
 colorlinks=true,
 linkcolor=darkgoldenrod,
 filecolor=brown,      
 urlcolor=gold(metallic),
 citecolor=darkgoldenrod,
 }
\newtheorem{lthm}{Theorem}

\usepackage[all,cmtip]{xy}

\DeclareFontFamily{U}{wncy}{}
\DeclareFontShape{U}{wncy}{m}{n}{<->wncyr10}{}
\DeclareSymbolFont{mcy}{U}{wncy}{m}{n}
\DeclareMathSymbol{\Sh}{\mathord}{mcy}{"58}
\usepackage[T2A,OT2,T1]{fontenc}

\newtheorem{theorem}{Theorem}[section]
\newtheorem{lemma}[theorem]{Lemma}

\newtheorem*{theorem*}{Theorem}
\newtheorem*{ass*}{Assumption}
\newtheorem{definition}[theorem]{Definition}

\newtheorem{proposition}[theorem]{Proposition}

\newcommand{\Z}{\mathbb{Z}}
\newcommand{\Q}{\mathbb{Q}}
\newcommand{\F}{\mathbb{F}}

\newcommand{\cC}{\mathcal{C}}
\newcommand{\cO}{\mathcal{O}}

\newcommand{\Hom}{\mathrm{Hom}}

\newcommand{\Gr}{\mathrm{Gr}}

\newcommand{\R}{\mathbb{R}}
\providecommand{\C}{}
\renewcommand{\C}{\mathbb{C}}

\newcommand{\cM}{\mathcal{M}}

\newcommand{\Gal}{\operatorname{Gal}}
\newcommand{\Tr}{\operatorname{Tr}}
\newcommand{\disc}{\operatorname{disc}}
\newcommand{\sh}{\operatorname{sh}}
\newcommand{\eps}{\varepsilon}

\newcommand{\Aut}{\operatorname{Aut}}

\title[Shapes of tame abelian number fields]{Shapes of tame abelian number fields}

\author[A.~Ray]{Anwesh Ray\, \orcidlink{0000-0001-6946-1559}}
\address{Chennai Mathematical Institute, Chennai, India}
\email{anwesh@cmi.ac.in}

\subjclass[2020]{11R16, 11R18, 11R29, 11R33}
\keywords{shapes of number fields, abelian extensions, normal integral bases, Gaussian periods, character conductors, distribution of discriminants}

\begin{document}

\begin{abstract}
The shape of a number field $K$ of degree $n\geq2$ is the similarity
class of the lattice obtained by projecting $\cO_K$ orthogonally
away from the rational line in Minkowski space.
It defines a point in the space of shapes of rank-$(n-1)$ lattices,
$\mathscr S_{n-1}:=\mathrm{GL}_{n-1}(\Z)\backslash
\mathrm{GL}_{n-1}(\R)/\mathrm{GO}_{n-1}(\R)$,
where $\mathrm{GO}_{n-1}(\R)$ is the group of orthogonal similitudes. Let $G$ be a fixed nontrivial finite abelian group, and let $K/\Q$ vary over tamely ramified Galois extensions with Galois group isomorphic to $G$. We study the variation of the shapes of such number fields in $\mathscr S_{n-1}$, where $n=|G|$.
\end{abstract}

\maketitle

\section{Introduction}

The Minkowski embedding realizes the ring of integers of a degree-$n$ number field $K$ as a lattice of rank $n$ in a Euclidean space. Its covolume is determined by the absolute discriminant. To define the \emph{shape} of $K$, one projects this lattice orthogonally away from the line generated by $1$ and takes the resulting lattice up to orthogonal transformations and positive scalar dilations. This records the geometry of the projected lattice up to similarity. For a prescribed arithmetic family, one can then ask which shapes occur and how they are distributed when the fields are ordered by discriminant.

The study of this invariant began with Terr's work on cubic orders \cite{Terr1997}. Bhargava and H subsequently proved equidistribution results for shapes in large families of cubic, quartic and quintic number fields \cite{BhargavaHarron2016}. In Galois families, the symmetries of the field impose additional restrictions on the lattice. Cyclic cubic fields, for example, have a fixed hexagonal shape. H and Harron determined the shapes of Galois quartic fields and also described their distribution in the biquadratic case \cite{HarronHarron2020}. Their results exhibit families whose shapes vary in spaces of much smaller dimension than the ambient space of lattice shapes. In earlier work \cite{JakharRayMultiquadratic2026}, we studied totally real multiquadratic fields in which $2$ is unramified and found that positive diagonal parameters govern the variation of their shapes. 

Let $G$ be a fixed nontrivial finite abelian group and consider the family of isomorphism classes of tamely ramified $G$-extensions $K$ over $\Q$. In this setting, the Hilbert--Speiser theorem supplies a normal integral basis. Two features of tame abelian extensions are especially useful for the shape problem. The first concerns the integral lattice. If $N$ is the conductor of $K$ and $M=\Q(\zeta_N)$, then the Gaussian period $\alpha_K:=\Tr_{M/K}(\zeta_N)$ is a normal integral basis generator. Its conjugates therefore identify $\cO_K$ with the integral group ring $\Z[G]$, through the map $a\mapsto a\alpha_K$. Moreover, $\Tr_{K/\Q}(\alpha_K)=\mu(N)$ is equal to $1$ or $-1$.

The second feature concerns the metric. The regular representation of an abelian group is diagonalized by its characters, so the Minkowski form pulled back to $\R[G]$ can be studied by Fourier analysis. The Fourier coefficient attached to a character $\chi$ is the resolvent
\[
        R_\chi(\alpha_K)
        =\sum_{g\in G}\overline{\chi(g)}g(\alpha_K).
\]
Since $N$ is squarefree, its absolute square is precisely the primitive conductor $f_\chi(K)$ of the character.

Let $\widehat G=\Hom(G,\C^\times)$ and let $\Omega(G)$ be the set of $\Gal(\overline\Q/\Q)$-orbits in $\widehat G\setminus\{1\}$. For $\omega\in\Omega(G)$, let $V_\omega$ denote the corresponding irreducible rational summand of $\Q[G]$. All characters in $\omega$ have the same conductor, which we denote by $f_\omega(K)$. Let $\eps:\Z[G]\to\Z$ be the augmentation map, given by $\eps(\sum_ga_gg)=\sum_ga_g$, and write $N_G=\sum_{g\in G}g$. We shall use the fixed lattice
\begin{equation}\label{eq:LG}
       L_G:=\{\,|G|a-\eps(a)N_G:a\in\Z[G] \,\}
       \subset \ker(\eps)\otimes_\Z\Q.
\end{equation}
For every $\omega$, let $Q_\omega$ be the restriction to $V_\omega\otimes_\Q\R$ of the standard Euclidean form on $\R[G]$ for which the elements of $G$ form an orthonormal basis.

\begin{lthm}\label{thm:intro-main}
Let $G$ be a nontrivial finite abelian group and let $K/\Q$ be a tamely ramified Galois extension with $\Gal(K/\Q)\simeq G$. Then the shape of $K$ is the similarity class of the fixed lattice $L_G$ equipped with the metric
\[
        \bigoplus_{\omega\in\Omega(G)} f_\omega(K)Q_\omega.
\]
In particular, the shape of $K$ is determined by the conductors of the nontrivial rational characters of $G$.
\end{lthm}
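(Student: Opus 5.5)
We would identify $\cO_K$ with $\Z[G]$ via $\phi:a\mapsto a\alpha_K$, where $\alpha_K=\Tr_{M/K}(\zeta_N)$ is the Gaussian period normal integral basis from the introduction. We then pull back the Minkowski form and the projection away from $1$, and diagonalize the result by characters. Normalize the Minkowski form as $\langle x,y\rangle=\sum_{\sigma:K\to\C}\sigma(x)\overline{\sigma(y)}$, which is real and positive definite on $K\otimes\R$; in the real/complex coordinates this differs from the usual form only by a fixed scalar, which is irrelevant for similarity classes.

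\textbf{Step 1: the pulled-back metric in Fourier coordinates.} Fix one embedding and identify the embeddings with $G$. For $a=\sum_h a_h h\in\R[G]$ put $\hat a(\chi)=\sum_h a_h\chi(h)$, which is multiplicative in $a$. A direct computation gives
\[
\textstyle\sum_{g}\overline{\chi(g)}\,g(a\alpha_K)=\hat a(\chi)\,R_\chi(\alpha_K).
\]
By Plancherel on $G$, we get $\langle a\alpha_K,b\alpha_K\rangle=\frac1{|G|}\sum_{\chi}\hat a(\chi)\overline{\hat b(\chi)}\,|R_\chi(\alpha_K)|^2$. Using the fact quoted in the introduction that $|R_\chi(\alpha_K)|^2=f_\chi(K)$ for $N$ squarefree, this becomes
\[
\textstyle\langle a\alpha_K,b\alpha_K\rangle=\frac1{|G|}\sum_\chi f_\chi(K)\,\hat a(\chi)\overline{\hat b(\chi)},\qquad f_1=1.
\]
The standard form on $\R[G]$ is $\frac1{|G|}\sum_\chi\hat a(\chi)\overline{\hat b(\chi)}$, and $V_\omega\otimes\R$ is exactly the span of the $\chi$-isotypic lines for $\chi\in\omega$. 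Moreover $f_\chi$ is constant on $\omega$, which one checks from the conductor being Galois invariant. Hence the pulled-back form is $Q_{\mathrm{triv}}\oplus\bigoplus_\omega f_\omega(K)Q_\omega$, where the decomposition is orthogonal for the standard form.

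\textbf{Step 2: the projection.} Under $\phi$, the element $1\in\cO_K$ corresponds to some $u\in\Z[G]$ with $u\alpha_K=1$. Since $\phi$ is $G$-equivariant, $u$ is $G$-invariant, so $u=cN_G$. Applying the trace gives $|G|=c\,\Tr(\alpha_K)=c\mu(N)$, so $u=\mu(N)N_G/|G|$ and the line $\R\cdot1$ corresponds to $\R N_G$. The line $\R N_G$ is the trivial isotypic part, so it is orthogonal to every $V_\omega$ for the pulled-back form as well. The orthogonal projection away from $1$ is therefore $a\mapsto a-\frac{\eps(a)}{|G|}N_G$, the same map for all $K$. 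The image of $\Z[G]$ is $\frac1{|G|}L_G$, and rescaling by $|G|$ does not change the similarity class. Restricted to $\ker(\eps)\otimes\R=\bigoplus_\omega V_\omega\otimes\R$, the metric is $\bigoplus_\omega f_\omega(K)Q_\omega$, which proves the theorem. The final assertion follows because $L_G$ and the $Q_\omega$ do not depend on $K$.

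\textbf{Main obstacle.} The substantive input is the resolvent identity $|R_\chi(\alpha_K)|^2=f_\chi(K)$. If it needs justification, we would compute $R_\chi(\alpha_K)$ by viewing $\chi$ as a Dirichlet character modulo $N$. Unfolding the trace $\Tr_{M/K}$ gives $R_\chi(\alpha_K)=\sum_{t\in(\Z/N)^\times}\overline{\chi}(t)\zeta_N^t$. This is a Gauss sum of the possibly imprimitive character $\overline\chi$ modulo the squarefree $N$. Write $N=f_\chi N'$ with $\gcd(f_\chi,N')=1$ by squarefreeness. By CRT the sum factors as the primitive Gauss sum modulo $f_\chi$, of absolute value $\sqrt{f_\chi}$, times a root unit $\mu(N')\chi(\cdot)$. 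Hence the absolute square is $f_\chi$. The remaining bookkeeping is to be careful with normalizations: the embedding-coordinate form versus the real Minkowski form, and the sign $\mu(N)$.
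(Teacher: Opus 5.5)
Your proposal is correct and follows essentially the paper's argument: the Gaussian-period normal integral basis, the CRT factorization of the imprimitive Gauss sum giving $|R_\chi(\alpha_K)|^2=f_\chi(K)$, and diagonalization of the pulled-back Minkowski form on the character lines. The main difference is that you do the diagonalization in one step via Plancherel, whereas the paper uses $G$-invariance together with the computation $B_K(v_\chi,v_\chi)=nf_\chi(K)$. There is one harmless slip: since $N_G\alpha_K=\Tr_{K/\Q}(\alpha_K)=\mu(N)$, the preimage of $1$ is $u=\mu(N)N_G\in\Z[G]$, not $\mu(N)N_G/|G|$; you only use that $\R\cdot1$ corresponds to $\R N_G$, so nothing downstream is affected.
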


Theorem~\ref{thm:intro-main} is proved in a slightly more precise form as Theorem~\ref{thm:main}. Simultaneously multiplying all the $f_\omega$ by the same positive number does not change the shape, so the natural parameter space is the positive torus $T_G:=(\R_{>0})^{\Omega(G)}/\R_{>0}$. Its logarithmic coordinates form $E_G:=\R^{\Omega(G)}/\R\mathbf1$. Theorem~\ref{thm:intro-main} defines a continuous map $\Phi_G:E_G\to\mathscr S_{|G|-1}$ by assigning to $[y]$ the shape of $L_G$ with metric $\bigoplus_\omega e^{y_\omega}Q_\omega$. The tame shape locus lies in the image of this map. Let $\cC(G)$ denote the set of nontrivial cyclic subgroups of $G$. After choosing an identification of the Galois group with $G$, define, for $C\in\cC(G)$,
\begin{equation}\label{eq:qC}
       q_C(K):=\prod_{\substack{p\text{ ramified in }K\\ I_p=C}}p.
\end{equation}
The integers $q_C(K)$ are squarefree and pairwise coprime. A tame character is ramified at $p$ exactly when it is nontrivial on $I_p$, and its conductor exponent is then one. Consequently, if $\chi\in\omega$,
\begin{equation}\label{eq:conductor-inertia-intro}
       f_\omega(K)=
       \prod_{\substack{C\in\cC(G)\\ \chi|_C\neq 1}}q_C(K).
\end{equation}
Taking logarithms turns this multiplicative relation into a linear one. Define $A_G=(a_{\omega,C})_{\omega\in\Omega(G),\,C\in\cC(G)}$ where
\begin{equation}\label{eq:AG-intro}
       a_{\omega,C}=
       \begin{cases}
       1,&\chi|_C\neq1,\\
       0,&\chi|_C=1,
       \end{cases}
\end{equation}
where $\chi$ is any member of the orbit $\omega$. The matrix $A_G$ is shown to be square and nonsingular. The detailed argument appears in Section~\ref{sec:inertia-coordinates}.

Let $\ell$ be the smallest prime divisor of $|G|$. Put $\cM(G)=\{C\leq G:|C|=\ell\}$ and $s_G=|\cM(G)|=(|G[\ell]|-1)/(\ell-1)$. Let $\pi:\R^{\Omega(G)}\to E_G$ be the quotient map. For the column $A_C$ of $A_G$, write $v_C=\pi(A_C)$. The vectors $v_C$ for $C\in\cM(G)$ are affinely independent, and hence determine a simplex
\[
       \mathscr P_G=\operatorname{conv}\{v_C:C\in\cM(G)\}\subset E_G.
\]
For a marking $\iota:\Gal(K/\Q)\xrightarrow{\sim}G$, the labeled logarithmic parameter is $\lambda(K,\iota)=\pi((\log f_\omega(K,\iota))_\omega)$. Changing the marking acts through $\Aut(G)$, so its class in $E_G/\Aut(G)$ is intrinsic to the field. The map $\Phi_G$ is invariant under this action and therefore descends to that finite quotient. The quotient removes the choice of marking; its identification with the full lattice-shape locus is not needed for the statements below.

\begin{lthm}\label{thm:intro-distribution}
Let $\ell$ be the smallest prime divisor of $|G|$, and put $a_0=|G|(1-1/\ell)$ and $\alpha_G=1/a_0$. Let $\mathscr F_G^{\rm mark}(X)$ be the marked tame $G$-extensions of absolute discriminant at most $X$. Then
\[
       |\mathscr F_G^{\rm mark}(X)|\sim C_G^{\rm mark}X^{\alpha_G}(\log X)^{s_G-1}
\]
for an explicit constant $C_G^{\rm mark}>0$. Moreover, the rescaled logarithmic parameters $a_0\lambda(K,\iota)/\log X$ become uniformly distributed on the simplex $\mathscr P_G$.
\end{lthm}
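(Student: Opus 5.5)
The plan is to parametrize marked tame $G$-extensions by their inertia data and then count squarefree integers with prescribed prime factorization patterns. By class field theory, a marked tame $G$-extension of $\Q$ corresponds to a surjection $\prod_p \Z_p^\times \to G$ that is trivial on $\Z_p^\times$ for all but finitely many $p$, tame at each $p$ (factoring through $\F_p^\times$), and trivial on $\{\pm1\}$ diagonally if we insist on the idelic condition for $\Q$ (no real ramification constraint is needed for a count over all signatures, but the global unit $-1$ imposes one linear condition). Concretely, a tame marked extension is a tuple of homomorphisms $\phi_p:\F_p^\times\to G$ with cyclic image $C_p=I_p$, jointly surjective. Then $|C_p|$ divides $p-1$. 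The discriminant is computed by the conductor--discriminant formula together with \eqref{eq:conductor-inertia-intro}:
\[
|\disc K|=\prod_{\omega}f_\omega^{|\omega|}=\prod_{C\in\cC(G)} q_C(K)^{|G|(1-1/|C|)},
\]
since the number of characters nontrivial on $C$ is $|G|-|G|/|C|$. Thus the exponent attached to $C$ is $a_C=|G|(1-1/|C|)$, minimized exactly when $|C|=\ell$, giving $a_0$.

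Next I form the Dirichlet series $\sum_{K,\iota}|\disc K|^{-s}$ as an Euler product over $p$: the local factor is $1+\sum_{C}\#\{\text{injections }\Z/|C|\hookrightarrow G \text{ onto } C \text{ realizable at } p\}\,p^{-a_Cs}$, the number of $\phi_p$ with image $C$ being $\varphi(|C|)$ when $|C|\mid p-1$. Surjectivity is handled by Möbius inversion over subgroups of $G$ (finitely many terms, each of lower order or same shape). By Chebotarev/Dirichlet on primes $p\equiv1\bmod \ell$, the leading singularity at $s=1/a_0$ behaves like $\prod_{C\in\cM(G)}\zeta(a_0s)^{(\ell-1)/(\ell-1)}$: each $C$ of order $\ell$ contributes $\varphi(\ell)=\ell-1$ homomorphisms on a density $1/(\ell-1)$ set of primes, so a pole of order exactly $s_G$. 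A Tauberian theorem (Delange) gives $|\mathscr F_G^{\rm mark}(X)|\sim C X^{\alpha_G}(\log X)^{s_G-1}$, with $C$ an explicit Euler product times residue; the global condition from $-1$ and surjectivity only alter the constant, which is positive because surjective configurations using order-$\ell$ inertia plus finitely many extra primes exist.

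For equidistribution, the counting shows that almost all fields have $q_C=1$ contribution negligible for $C\notin\cM(G)$ on the log scale: fields with $\prod_{C\notin\cM}q_C> X^{\delta}$ are $o$ of the total, since those $C$ have strictly larger exponent. So $\log|\disc K|\approx a_0\sum_{C\in\cM}\log q_C$, and $\lambda=\pi(A_G(\log q_C)_C)$ by \eqref{eq:conductor-inertia-intro}, so $a_0\lambda/\log X\approx \sum_{C\in\cM} t_C v_C$ with $t_C=a_0\log q_C/\log X$, $\sum t_C\le 1$. The main obstacle is showing $(t_C)$ is uniformly distributed on the simplex $\{t\ge0,\sum t_C\le1\}$ mapped to $\mathscr P_G$ — note the vertices require $\sum t_C= 1$ essentially, and indeed counts concentrate near $\sum t_C=1$ (the discriminant near $X$), so the distribution lives on the face $\sum t_C=1$, which is $\mathscr P_G$ via affine independence. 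I would prove this by counting tuples $(q_C)_{C\in\cM}$ of coprime squarefree integers composed of primes $\equiv1\bmod\ell$ with $\prod q_C\le X^{1/a_0}$ and $q_C\le X^{u_C/a_0}$ for boxes: each $q_C$ has counting function $\sim c\,Y(\log Y)^{-1+1/1}\cdot$(weight), i.e. density $\asymp 1/\log$ per unit of $\log$; a standard convolution (multidimensional Selberg--Delange, or partial summation of $\sum \prod g_C(q_C)$ where $\sum_{q\le Y} g(q)/q\sim c\log Y$) gives that $(\log q_C/\log Y)$ has density proportional to Lebesgue measure on the standard simplex, which is uniform. Conditioning on the few extra primes needed for surjectivity and the $-1$ condition (equidistributed by a character sum argument) completes the proof.
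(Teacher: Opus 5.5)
\textbf{The error: the ``global unit $-1$'' condition.} You plan to impose this condition and absorb it with a character-sum argument, but over $\Q$ no such condition exists. One has $\mathbb A_\Q^\times=\Q^\times\times(\R_{>0}\times\widehat\Z^\times)$, so $\Gal(\Q^{\rm ab}/\Q)\cong\widehat\Z^\times=\prod_p\Z_p^\times$ with no relation; the global $-1$ has already been used to absorb the sign at the real place. This is what Kronecker--Weber gives in Lemma~\ref{lem:tame-parametrization}: marked tame $G$-extensions of conductor $N$ are exactly the surjections $(\Z/N\Z)^\times\to G$, i.e.\ arbitrary jointly surjective tuples $(\phi_p)_{p\mid N}$.

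The image of the diagonal $-1$ is complex conjugation, so requiring $\prod_p\phi_p(-1)=1$ selects the real fields. This is vacuous when $|G|$ is odd, but not when $\ell=2$. For $G=\Z/2$ the marked count is the number of odd squarefree $1<N\le X$, which is $\sim(4/\pi^2)X$. Your condition keeps only $N\equiv1\pmod 4$ and gives $\sim(2/\pi^2)X$. As written, your constant is wrong for even $|G|$, and your equidistribution statement is proved only for a subfamily. Delete the condition; no character-sum step is then needed.

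\textbf{Comparison with the paper.} With that fixed, your outline is sound, but it is organized differently.
\begin{itemize}
\item \emph{Counting.} You use one global Euler product over all tame homomorphisms plus M\"obius inversion over subgroups. The paper instead fibres over the higher inertia coordinates. For fixed $\mathbf q_{>}$ the bound becomes $\prod_{C\in\cM(G)}q_C\le(X/D_{>})^{\alpha_G}$. The minimal coordinates are counted with $\mathcal F_Q(\mathbf z)=C_Q(\mathbf z)/\prod_j(z_j-1)$ from Lemma~\ref{lem:FQ-factorization}. Its Euler factor $1+(\ell-1)\sum_jp^{-z_j}$ builds in squarefreeness and pairwise coprimality, and the boundary continuation needed for Delange's theorem comes from Dirichlet $L$-functions modulo $\ell$, not merely from prime density. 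The sum over $\mathbf q_{>}$ is truncated at $D_{>}\le M$, with the tail controlled by the $Q$-uniform bound of Lemma~\ref{lem:box-counting}.
\item \emph{Positivity of the constant.} The paper's constant is a convergent series of nonnegative terms, so positivity is read off from one term. Your M\"obius constant $\sum_{H\supseteq G[\ell]}\mu(H,G)c_H$ is alternating, so its positivity really does require the lower bound you sketch, which is the fibring argument again.
\item \emph{Equidistribution.} The paper avoids box counting. The mixed moments $\sum w(\mathbf q)\prod_i(\log q_i)^{k_i}$ are diagonal restrictions of derivatives of $\mathcal F_Q$, with a pole of order $s+|\mathbf k|$ and leading coefficient $c_{\ell,s}(Q)\prod_ik_i!$. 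Delange's theorem matches them with the Dirichlet integrals over $\Delta_{s-1}$, and Stone--Weierstrass gives weak convergence. Your partial-summation route can be made to work, but coprimality with the other coordinates changes the mean value by a $q$-dependent Euler factor that you must track.
\end{itemize}

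\textbf{Two cautions.}
\begin{itemize}
\item Your reduction to $\prod_{C\notin\cM}q_C\le X^\delta$ still leaves a family of higher tuples that grows with $X$. The minimal-coordinate asymptotics are only available for a fixed coprimality modulus, so truncate at a fixed $M$ and let $X\to\infty$ before $M\to\infty$, as the paper does.
\item You take the affine independence of the $v_C$ for granted. The paper proves it in Lemma~\ref{lem:simplex-independent} from the invertibility of $A_G$, with a separate argument when $G$ is elementary abelian.
\end{itemize}
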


Taken together, these results give a description of how the arithmetic of a tame abelian extension is reflected in the geometry of its shape. The underlying lattice depends only on the Galois group, while the variation in its metric is governed by the conductors of the nontrivial rational characters. The distribution theorem shows, moreover, that the logarithmic conductor parameters are equi-distributed with respect to the uniform measure on a simplex. It would be interesting to understand how this picture changes when the tameness assumption is removed. Wild ramification introduces additional difficulties, particularly because normal integral bases need no longer exist, and it is not clear whether an equally explicit description of the shape locus can be obtained. A further direction is to investigate nonabelian Galois extensions, where the decomposition into rational representations may provide a starting point for understanding the geometry and distribution of shapes in more general Galois families.

\subsection*{Declarations}
\begin{description}
     \item[Conflict of interest] Not applicable; there is no conflict of interest to report.
    \item[Availability of data and materials] No data was generated or analyzed in obtaining the results in this article.
    \item[Funding] No funding was received during the preparation of this article.
    \item[Use of artificial intelligence]
The author used ChatGPT 6 Astra to assist in correcting mistakes in the article and to assist in improving and editing exposition while the article was being written. It was not used to generate the mathematical ideas or the main results of the article.
\end{description}

\section{Preliminary notions}\label{sec:shape-prelim}

We recall the definition of the shape space and the character decomposition of an abelian group.

Given a full rank lattice in an inner product space, its shape is invariant with respect to rotations, reflections and positive scalar dilations. The shape is then viewed as a point in the space of shapes of lattices of fixed rank. This space is a double quotient space and comes equipped with a natural measure. We follow the conventions used in
\cite{DasKalaMukhopadhyayRay2026,JakharKalwaniyaRayRoy2026,
JakharRayOctic2026,JakharRayMultiquadratic2026}. This will also fix
the notation that we use throughout the paper.

Let $V$ be an $r$-dimensional real inner product space and let
$\Lambda, \Lambda'\subset V$ be lattices of full rank. We say that $\Lambda$ and $\Lambda'$ have the same shape if $\Lambda$ can be
obtained from $\Lambda'$ by a composition of an orthogonal
transformation and a positive scalar dilation. After choosing an orthonormal basis of $V$, we may identify $V$ with
$\R^r$. A choice of basis for $\Lambda$ then identifies the lattice
with an element of $\mathrm{GL}_r(\R)$. Changing the $\Z$-basis of the lattice
gives the action of $\mathrm{GL}_r(\Z)$, while rotations, reflections and
positive scalar dilations give the action of the group of orthogonal
similitudes
\[
        \mathrm{GO}_r(\R)
        :=
        \left\{
        g\in\mathrm{GL}_r(\R):
        g^{\mathrm t}g=cI_r
        \text{ for some }c>0
        \right\}.
\]
The space of shapes of rank $r$ lattices is the double
quotient
\begin{equation}\label{eq:space-of-shapes}
        \mathscr S_r
        :=
        \mathrm{GL}_r(\Z)\backslash
        \mathrm{GL}_r(\R)/
        \mathrm{GO}_r(\R).
\end{equation}
There is an equivalent description in terms of positive-definite
quadratic forms which is often more useful for computations. Let $\mathscr P_r$ be the space of positive-definite symmetric $r\times r$ real
matrices. If $\{b_1,\ldots,b_r\}$
is a basis of $\Lambda$, its Gram matrix is
\[
        \Gr(\Lambda)
        :=
        \bigl(\langle b_i,b_j\rangle\bigr)_{1\leq i,j\leq r}
        \in\mathscr P_r.
\]
Changing the integral basis by a matrix
$U\in\mathrm{GL}_r(\Z)$ replaces the Gram matrix $G=\Gr(\Lambda)$ by $U^{\mathrm t}GU$ whereas multiplying the lattice by a common positive scalar replaces
$G$ by a positive scalar multiple of $G$. Consequently, the same
space of shapes may be described as
\begin{equation}\label{eq:shape-space-gram}
        \mathscr S_r
        \simeq
        \mathrm{GL}_r(\Z)\backslash
        \mathscr P_r/\R_{>0}.
\end{equation}

It is sometimes convenient to remove the scalar ambiguity explicitly.
Set
\[
        \mathscr H_r
        :=
        \left\{
        G\in\mathscr P_r:\det(G)=1
        \right\}.
\]
Every positive-definite symmetric matrix has a unique positive scalar
multiple lying in $\mathscr H_r$. Hence
\[
        \mathscr P_r/\R_{>0}
        \simeq
        \mathscr H_r,
\]
and we may equally well regard
\[
        \mathscr S_r
        \simeq
        \mathrm{GL}_r(\Z)\backslash\mathscr H_r.
\]

The Haar measure on $\mathrm{GL}_r(\R)$ gives rise to a natural invariant
measure on $\mathscr S_r$. We denote it by
$\mu_{\mathscr S_r}$, or simply by $\mu$ when the ambient space of
shapes is clear from the context. In the families considered later in
the paper, the shapes will usually lie on lower-dimensional subsets of
$\mathscr S_r$, and the relevant limiting measures will be obtained
from the natural measures on these loci.

The description of the space of shapes becomes especially concrete in
rank $2$. A general element of $\mathscr H_2$ may be written in the form $Y=
        \begin{pmatrix}
        a&b\\
        b&c
        \end{pmatrix}$, where $a>0$, $c>0$ and $ac-b^2=1$. There is a natural identification of $\mathscr H_2$ with the complex
upper half-plane
\[
        \mathbb H
        =
        \{z=x+iy\in\mathbb C:y>0\}.
\]
Indeed, to the matrix $Y$ above we associate the point $z=x+iy$, where $x=\frac{b}{c}$ and $y=\sqrt{\frac{a}{c}-x^2}$. Since $ac-b^2=1$, the second expression may also be written simply as $y=\frac{1}{c}$. Conversely, if $z=x+iy\in\mathbb H$, then the corresponding
determinant-one positive-definite form is
\begin{equation}\label{eq:H2-upper-half-plane}
        Y_z
        =
        \frac{1}{y}
        \begin{pmatrix}
        x^2+y^2&x\\
        x&1
        \end{pmatrix}.
\end{equation}

Under this correspondence, the $\operatorname{GL}_2(\mathbb Z)$-orbits of
positive-definite forms correspond to modular orbits in the upper half-plane.
For the convention $Y\mapsto U^{\mathrm t}YU$ used here, if
$U=\left(\begin{smallmatrix}a&b\\c&d\end{smallmatrix}\right)$ has determinant $1$,
the corresponding point is $(az+c)/(bz+d)$. If $\det U=-1$, it is
$(a\overline z+c)/(b\overline z+d)$. Consequently, the
rank-$2$ space of shapes may be identified with the modular surface
\begin{equation}\label{eq:S2-modular-surface}
        \mathscr S_2
        =
        \operatorname{GL}_2(\mathbb Z)\backslash\mathscr H_2
        \simeq
        \operatorname{GL}_2(\mathbb Z)\backslash\mathbb H.
\end{equation}
One should note here the use of $\operatorname{GL}_2(\mathbb Z)$
rather than $\operatorname{SL}_2(\mathbb Z)$. The determinant $-1$
elements introduce a reflection which identifies the two halves of
the standard $\operatorname{SL}_2(\mathbb Z)$-fundamental domain. We
may therefore take
\begin{equation}\label{eq:modular-fundamental-domain}
        \mathcal F_2
        =
        \left\{
        x+iy\in\mathbb H:
        0\leq x\leq\frac12,\quad
        x^2+y^2\geq1
        \right\}
\end{equation}
as a closed fundamental region for the space of rank-$2$ shapes. The two vertical sides are reflection boundaries of the quotient; in particular, the side $x=1/2$ must be included to represent its boundary orbits. The measure induced from the Haar measure is, up to normalization, the
hyperbolic measure $d\mu(z)
        =
        \frac{dx\,dy}{y^2}$.

        \begin{figure}[ht]
\centering
\begin{tikzpicture}[scale=2.0]

\fill[gray!15]
  (0,1)
  arc[start angle=90,end angle=60,radius=1]
  -- (0.5,2.7)
  -- (0,2.7)
  -- cycle;

\draw[->] (-0.25,0) -- (1.05,0)
  node[right] {$x$};
\draw[->] (0,0) -- (0,2.9)
  node[above] {$y$};

\draw[thick]
  (0,1)
  arc[start angle=90,end angle=60,radius=1];

\draw[thick] (0,1) -- (0,2.7);
\draw[thick] (0.5,{sqrt(3)/2}) -- (0.5,2.7);

\fill (0,1) circle (0.025);
\fill (0.5,{sqrt(3)/2}) circle (0.025);

\node[left] at (0,1) {$i$};
\node[below] at (0.5,0) {$\frac12$};
\node[below] at (1,0) {$1$};

\node[right] at (0.5,{sqrt(3)/2})
  {};

\node at (0.24,1.85) {$\mathcal F_2$};

\node[rotate=-28] at (0.73,0.72)
  {};

\end{tikzpicture}
\caption{A fundamental domain $\mathcal F_2$ for
$\operatorname{GL}_2(\mathbb Z)\backslash\mathbb H$.}
\label{fig:fundamental-domain-F2}
\end{figure}
\par We now specialize this discussion to number fields. Let $K$ be a
number field of degree $n=[K:\Q]$ and let $\cO_K$ denote its ring of integers. The Minkowski embedding realizes $\cO_K$ as a lattice of rank $n$ in
the $n$-dimensional real vector space $K_\R:=K\otimes_\Q\R$. We equip $K_\R$ with the natural positive-definite inner product
\begin{equation}\label{eq:minkowski-form}
        \langle x,y\rangle_K
        :=
        \sum_{\sigma:K\hookrightarrow\C}
        \sigma(x)\overline{\sigma(y)}.
\end{equation}
Although the individual terms on the right-hand side of
\eqref{eq:minkowski-form} may be complex, the sum is real for
$x,y\in K$, since the nonreal embeddings occur in complex-conjugate
pairs. When $K$ is totally real, it simplifies to
\[
        \langle x,y\rangle_K
        =
        \Tr_{K/\Q}(xy).
\]

The line $\R\cdot1$ is distinguished by the inclusion $\Q\subset K$. We project away from this line when defining the shape. For $x\in K$, we have
\[
        \langle x,1\rangle_K
        =
        \Tr_{K/\Q}(x).
\]
It follows that the orthogonal complement of the line $\R\cdot 1$ is
the trace-zero hyperplane
\[
        K_\R^0
        :=
        \left\{
        x\in K_\R:
        \Tr_{K/\Q}(x)=0
        \right\}.
\]
The orthogonal projection of $x\in K_\R$ onto this hyperplane is
\[
        x\longmapsto
        x-\frac{\Tr_{K/\Q}(x)}{n}.
\]
Rather than introduce denominators, we clear the factor $n$ and write
\begin{equation}\label{eq:trace-zero-projection}
        x^\perp
        :=
        nx-\Tr_{K/\Q}(x).
\end{equation}
We then define the projected lattice
\begin{equation}\label{eq:projected-lattice}
        \cO_K^\perp
        :=
        \left\{
        x^\perp:x\in\cO_K
        \right\}.
\end{equation}

Observe that the kernel of the map $\cO_K\longrightarrow K_\R^0$ defined by $x\mapsto x^\perp$
is precisely $\Z$. Indeed, if $x^\perp=0$, then $nx=\Tr_{K/\Q}(x)\in\Q$, and hence $x\in\Q$. Since $x$ is integral, it follows that
$x\in\Z$. Conversely, every rational integer lies in the kernel.
Thus $\cO_K^\perp$ is a lattice of rank $n-1$ in the
$(n-1)$-dimensional Euclidean space $K_\R^0$.

\begin{definition}\label{def:shape-number-field}
Let $K$ be a number field of degree $n\geq2$. The \emph{shape} of $K$,
denoted by $\sh(K)$, is the similarity class of the projected Minkowski lattice
$\cO_K^\perp$. Thus $\sh(K)\in\mathscr S_{n-1}$.
\end{definition}

More explicitly, let $\{1,\alpha_1,\ldots,\alpha_{n-1}\}$ be an integral basis of $\cO_K$. The elements $\alpha_1^\perp,\ldots,\alpha_{n-1}^\perp$ then give a $\Z$-basis of $\cO_K^\perp$. The shape of $K$ is 
represented by the Gram matrix
\begin{equation}\label{eq:shape-gram-number-field}
        \Gr(K)
        :=
        \left(
        \left\langle
        \alpha_i^\perp,\alpha_j^\perp
        \right\rangle_K
        \right)_{1\leq i,j\leq n-1}.
\end{equation}

{\leavevmode\color{black}From now on, let $G$ be a nontrivial finite abelian group of order $n$, and let $\widehat G=\Hom(G,\C^\times)$ be its character group. Every irreducible complex representation of $G$ is one-dimensional.} Given a commutative ring $R$, let $R[G]$ be the group ring of $G$ over $R$. For $\chi\in \widehat G$, we define
\begin{equation}\label{eq:e-chi}
       e_\chi=\frac1n\sum_{g\in G}\overline{\chi(g)}g\in\C[G].
\end{equation}
The elements $e_\chi$ are the primitive orthogonal idempotents of $\C[G]$. They satisfy $e_\chi e_\psi=0$ for $\chi\neq\psi$ and $\sum_\chi e_\chi=1$. Consequently
\[
       \C[G]=\bigoplus_{\chi\in\widehat G}\C e_\chi.
\]

The decomposition over $\Q$ is obtained by grouping characters into Galois orbits. If $\omega\subset\widehat G$ is an orbit under $\Gal(\overline\Q/\Q)$, set
\[
       e_\omega=\sum_{\chi\in\omega}e_\chi.
\]
The coefficients of $e_\omega$ are fixed by the absolute Galois group, so $e_\omega\in\Q[G]$. We define
\[
       V_\omega=e_\omega\Q[G].
\]Let $\chi\in\widehat G$ have order $d$.  Its values lie in
$\Q(\zeta_d)$, and the action
\[
   \Gal(\Q(\zeta_d)/\Q)\simeq(\Z/d\Z)^\times
\]
on $\widehat G$ is given by $\chi\mapsto\chi^a$ where $a\in(\Z/d\Z)^\times$. Consequently, the Galois orbit of $\chi$ consists precisely of the
generators of the cyclic subgroup $\langle\chi\rangle\leq\widehat G$.
Thus, if $\omega$ is such an orbit and the characters in $\omega$
have order $d_\omega$, then $|\omega|=\varphi(d_\omega)$. The corresponding irreducible rational representation $V_\omega$
therefore has
\[
        \dim_\Q V_\omega
        =
        |\omega|
        =
        \varphi(d_\omega).
\]

The trivial character gives the line $\Q N_G$, where $N_G=\sum_{g\in G}g$. We shall use repeatedly the rational decomposition
\begin{equation}\label{eq:rational-decomposition}
        \Q[G]=\Q N_G\oplus\bigoplus_{\omega\in\Omega(G)}V_\omega.
\end{equation}
Let $B_0$ denote the standard Euclidean form on $\R[G]$ for which the elements of $G$ are orthonormal. Since left multiplication by $G$ permutes this basis, $B_0$ is $G$-invariant.

\begin{lemma}\label{lem:character-lines-orthogonal}
Let $B$ be a $G$-invariant Hermitian form on $\C[G]$. Then the complex character lines $\C e_\chi$ and $\C e_\psi$ are orthogonal whenever $\chi\neq\psi$.
\end{lemma}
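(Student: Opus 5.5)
The plan is to use the fact that each character line is an eigenspace for the action of $G$ with its own eigencharacter. Since $B$ is invariant, $G$ acts unitarily with respect to $B$, and eigenvectors of a unitary operator for distinct eigenvalues are orthogonal. We take $G$-invariance of $B$ to mean $B(gx,gy)=B(x,y)$ for all $g\in G$ and $x,y\in\C[G]$, with $B$ linear in the first variable and conjugate-linear in the second.

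First I compute how $G$ acts on $e_\chi$. For $h\in G$, reindexing $g\mapsto h^{-1}g$ in \eqref{eq:e-chi} gives
\[
he_\chi=\frac1n\sum_{g}\overline{\chi(g)}hg=\frac1n\sum_{g}\overline{\chi(h^{-1}g)}g=\chi(h)e_\chi ,
\]
because $\overline{\chi(h^{-1})}=\chi(h)$, as character values are roots of unity. Thus $\C e_\chi$ is the $\chi$-isotypic line for left multiplication.

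Next I apply invariance. For any $h\in G$,
\[
B(e_\chi,e_\psi)=B(he_\chi,he_\psi)=\chi(h)\overline{\psi(h)}\,B(e_\chi,e_\psi)=(\chi\overline\psi)(h)\,B(e_\chi,e_\psi).
\]
If $\chi\neq\psi$, then $\chi\overline\psi$ is a nontrivial character, so some $h$ satisfies $(\chi\overline\psi)(h)\neq1$. This forces $B(e_\chi,e_\psi)=0$. Sesquilinearity then gives $B(ae_\chi,be_\psi)=a\bar b\,B(e_\chi,e_\psi)=0$ for all $a,b\in\C$, so the two lines are orthogonal.

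There is no real obstacle here. The only point needing care is the convention, namely that $\overline{\chi(h^{-1})}=\chi(h)$ and that conjugate-linearity in the second slot produces $\overline{\psi(h)}$ rather than $\psi(h)$. If one prefers the other convention for $B$, the same argument goes through with the roles of the two factors swapped.
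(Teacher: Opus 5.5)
Your proof is correct and follows the same route as the paper. Both arguments use that $G$ acts on $\C e_\chi$ through $\chi$, apply invariance to get $B(e_\chi,e_\psi)=\chi(h)\overline{\psi(h)}B(e_\chi,e_\psi)$, and choose $h$ with $\chi(h)\overline{\psi(h)}\neq1$; you additionally verify the eigenvector identity $he_\chi=\chi(h)e_\chi$ explicitly, which the paper simply asserts.
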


\begin{proof}
Consider nonzero vectors $v_\chi\in \C e_\chi$ and $v_\psi\in \C e_\psi$. For every $g\in G$, we have that $gv_\chi=\chi(g)v_\chi$ and $gv_\psi=\psi(g)v_\psi$.
We find that
\begin{align*}
        B(v_\chi,v_\psi)
        &=B(gv_\chi,gv_\psi)\\
        &=B(\chi(g)v_\chi,\psi(g)v_\psi)\\
        &=\chi(g)\overline{\psi(g)}B(v_\chi,v_\psi).
\end{align*}
Since $\chi\neq\psi$, there is a $g\in G$ for which $\chi(g)\overline{\psi(g)}\neq1$. The displayed identity then gives $B(v_\chi,v_\psi)=0$.
\end{proof}

\section{Computing the shape from the integral basis}
\label{sec:normal-basis}

In this section, we provide an explicit description of the shapes of tamely ramified abelian number fields with prescribed Galois group $G$.  The Hilbert--Speiser theorem is used to describe the integral basis of such a number field. For a positive integer $N$, put $\zeta_N=e^{2\pi i/N}$. Let $K/\Q$ be a finite abelian extension with Galois group $G$, and let
$N$ be its conductor, namely, the smallest positive integer such that $K\subseteq\Q(\zeta_N)$. We set $M=\Q(\zeta_N)$, $\Gamma=\Gal(M/\Q)$, $H=\Gal(M/K)$ and thus $G\simeq\Gamma/H$.
\begin{proposition}\label{prop:hilbert-speiser}
With this notation, the following assertions hold:
\begin{enumerate}
    \item $K/\Q$ is tamely ramified if and only if $N$ is odd and squarefree.
    \item If the equivalent conditions in (1) hold, the Gaussian period
\begin{equation}\label{eq:alphaK}
        \alpha_K
        :=
        \Tr_{M/K}(\zeta_N)
        =
        \sum_{h\in H}h(\zeta_N)
\end{equation}
is a normal integral basis generator for $K/\Q$.
\item Under the same assumption,
\begin{equation}\label{eq:trace-alpha}
        \Tr_{K/\Q}(\alpha_K)=\mu(N)\in\{\pm1\}.
\end{equation}
\end{enumerate}

\end{proposition}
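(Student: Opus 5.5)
The plan is to reduce everything to the cyclotomic field $M=\Q(\zeta_N)$ and its subfields $\Q(\zeta_p)$, and then to descend to $K$ using the trace.

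\textbf{Step 1: part (1).} Recall that the conductor $N$ is never $\equiv 2 \pmod 4$. By the Kronecker--Weber description, the inertia group at $p$ in $\Gal(\Q(\zeta_N)/\Q)$ is the factor $(\Z/p^{k}\Z)^\times$, where $p^k\,\|\,N$. Its wild (pro-$p$) part is the subgroup $1+p\Z/p^k\Z$.
\begin{itemize}
\item Suppose $p$ is odd. Then $K$ is tame at $p$ iff this wild subgroup acts trivially on $K$. That happens iff $K$ lies in the subfield $\Q(\zeta_{N/p^{k-1}})$, and by minimality of the conductor this means $k\le 1$.
\item Suppose $p=2$. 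The group $(\Z/2^k\Z)^\times$ is entirely wild, so tameness at $2$ forces $2$ to be unramified in $K$. Hence $2\nmid N$.
\end{itemize}
Conversely, if $N$ is odd and squarefree, then every inertia group is a quotient of some $(\Z/p\Z)^\times$, which has order prime to $p$. So $K$ is tame.

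\textbf{Step 2: part (2), first in $M$.} For a prime $p$, the conjugates $\zeta_p^a$ with $1\le a\le p-1$ form a $\Z$-basis of $\Z[\zeta_p]$. To see this, note that $1=-\sum_a\zeta_p^a$, so one may pass from the power basis $1,\zeta_p,\dots,\zeta_p^{p-2}$ to them.

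For squarefree $N=p_1\cdots p_r$, the discriminants of the fields $\Q(\zeta_{p_i})$ are pairwise coprime. Hence
\[
\cO_M=\Z[\zeta_{p_1}]\otimes\cdots\otimes\Z[\zeta_{p_r}].
\]
Taking products of the bases above, the conjugates $\gamma(\zeta_N)=\prod_i\zeta_{p_i}^{a_i}$, $\gamma\in\Gamma$, form a $\Z$-basis. In other words, $\cO_M=\Z[\Gamma]\zeta_N$ is free on $\zeta_N$.

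\textbf{Step 3: part (2), descent to $K$.} The extension $M/K$ is tamely ramified, since all ramification indices divide some $p-1$ with $p\mid N$. For tame extensions the trace $\Tr_{M/K}:\cO_M\to\cO_K$ is surjective. Therefore
\[
\cO_K=\Tr_{M/K}(\Z[\Gamma]\zeta_N)=\Z[G]\,\alpha_K.
\]
This exhibits $\cO_K$, which has rank $n=|G|$, as generated over $\Z$ by the $n$ elements $g(\alpha_K)$, $g\in G$. Hence these elements form a $\Z$-basis, and $\alpha_K$ is a normal integral basis generator.

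The main obstacle is the surjectivity of the tame trace. I would prove it locally. At each prime $\mathfrak p$ of $K$, the different $\mathfrak D_{M/K}$ has exponent $e-1$ at primes above $\mathfrak p$, which is less than the exponent $e$ of $\mathfrak p\cO_M$. Consequently $\mathfrak p^{-1}\cO_M\not\subset\mathfrak D_{M/K}^{-1}$. By duality this gives $\Tr_{M/K}(\cO_M)\not\subseteq\mathfrak p$ for every $\mathfrak p$. Since $\Tr_{M/K}(\cO_M)$ is an ideal of $\cO_K$ contained in no prime, it equals $\cO_K$.

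\textbf{Step 4: part (3).} By transitivity of the trace, $\Tr_{K/\Q}(\alpha_K)=\Tr_{M/\Q}(\zeta_N)$. Using the tensor decomposition of Step 2, this factors as
\[
\prod_{p\mid N}\Tr_{\Q(\zeta_p)/\Q}(\zeta_p)=\prod_{p\mid N}(-1)=\mu(N).
\]
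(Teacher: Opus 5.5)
Your proof is correct. The paper does not prove (1) and (2) at all. It quotes them from Johnston's book on Galois module structure (Theorem~8.4), and it verifies only (3). It does so by the same transitivity $\Tr_{K/\Q}(\alpha_K)=\Tr_{M/\Q}(\zeta_N)$ that you use, followed by the Ramanujan-sum evaluation $\Tr_{M/\Q}(\zeta_N)=c_N(1)=\mu(N)$.

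You instead supply the standard argument behind the citation:
\begin{enumerate}
\item For (1), you read tameness off the Sylow-$p$ part of the cyclotomic inertia group.
\item For (2), you show $\cO_M=\Z[\Gamma]\zeta_N$ for squarefree $N$ via the coprime-discriminant tensor decomposition. You then descend to $K$ by surjectivity of the tame trace, which you correctly derive from the different exponent $e-1<e$ and duality.
\end{enumerate}
For (3), you replace the Ramanujan sum by a factorization over the primes dividing $N$. To make that step precise, write $\zeta_N=\prod_{p\mid N}\zeta_p^{c_p}$ with each $c_p$ a unit modulo $p$, by the Chinese remainder theorem. Each factor then has trace $-1$, and the trace of the product is the product of the traces by linear disjointness.

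One small elision in Step~1: for $p=2$, the passage from ``$2$ is unramified in $K$'' to $2\nmid N$ uses minimality of $N$. If $2^k\,\|\,N$ with $k\geq 2$, then $K$ is fixed by $(\Z/2^k\Z)^\times$ and so lies in $\Q(\zeta_{N/2^k})$. The case $k=1$ is excluded by $N\not\equiv 2\pmod 4$.

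Your route gives a self-contained proof that shows exactly where tameness enters, namely the inertia computation in (1) and trace surjectivity in (2). The paper's route buys brevity by deferring to the literature.
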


\begin{proof}
Assertions (1) and (2) are well known, see
\cite[Theorem~8.4]{JohnstonGaloisModules}. We find that
\begin{align*}
        \Tr_{K/\Q}(\alpha_K)
        &=
        \Tr_{K/\Q}\left(\Tr_{M/K}(\zeta_N)\right)\\
        &=
        \Tr_{M/\Q}(\zeta_N)=\mu(N);
\end{align*}
cf. \cite[\S8.3]{ApostolANT}.  Under the conditions in (1), $N$ is squarefree and $\mu(N)=\pm1$.
This proves \eqref{eq:trace-alpha}.
\end{proof}

For the rest of this section, assume that $K/\Q$ is tamely ramified.
The normal integral basis identifies $\cO_K$ with $\Z[G]$. We first use this
identification to express the projected lattice as a fixed lattice $L_G$.
We then compute the pulled-back Minkowski form on each character line;
its coefficient will be the conductor of that character.
\par We set $N_G:=\sum_{g\in G}g\in\Z[G]$ and let $\eps:\Z[G]\longrightarrow\Z$ denote the augmentation map defined by
       \[ \eps\left(\sum_{g\in G}a_gg\right)
        =
        \sum_{g\in G}a_g.
\] We let
\[
        L_G
        :=
        \left\{
        na-\eps(a)N_G:a\in\Z[G]
        \right\}
        \subseteq\ker(\eps)\otimes_\Z\Q,
\]
where $n=|G|=[K:\Q]$ and let \[\iota_K:\Z[G]\xrightarrow{\sim}\cO_K\] be defined by $a\mapsto a\alpha_K$. It follows from Proposition \ref{prop:hilbert-speiser} that $\iota_K$ is an isomorphism. For \[a=\sum_{g\in G}a_gg\in\Z[G],\] one has that
\begin{align}
        \Tr_{K/\Q}\bigl(\iota_K(a)\bigr)
        &=
        \sum_{g\in G}a_g
        \Tr_{K/\Q}\bigl(g(\alpha_K)\bigr)\notag\\
        &=
        \mu(N)\sum_{g\in G}a_g
        =
        \mu(N)\eps(a).
        \label{eq:trace-iota}
\end{align}
On the other hand,
\begin{equation}\label{eq:NG-image}
        \iota_K(N_G)
        =
        \sum_{g\in G}g(\alpha_K)
        =
        \Tr_{K/\Q}(\alpha_K)
        =
        \mu(N).
\end{equation}
Here, as usual, the rational integer $\mu(N)$ is viewed as the corresponding
element of $\cO_K$.

\par We can now compare the scaled orthogonal projection of $\cO_K$ with $L_G$.
For $a\in\Z[G]$, equations \eqref{eq:trace-iota} and
\eqref{eq:NG-image} give
\begin{align}
        n\iota_K(a)
        -
        \Tr_{K/\Q}\bigl(\iota_K(a)\bigr)
        &=
        n\iota_K(a)-\mu(N)\eps(a)\notag\\
        &=
        \iota_K(na)-\eps(a)\iota_K(N_G)\notag\\
        &=
        \iota_K\bigl(na-\eps(a)N_G\bigr).
        \label{eq:projected-group-ring}
\end{align}
This therefore shows that
\begin{equation}\label{eq:projected-identification}
        \cO_K^\perp=\iota_K(L_G).
\end{equation}

\par Extend $\iota_K$ to $\iota_K:\C[G]\xrightarrow{\sim}K\otimes_\Q\C$, where $G$ acts on the $K$-factor and acts trivially
on the scalar factor $\C$. For a character
\[
        \chi\in\widehat G=\Hom(G,\C^\times),
\]
use the inclusion $K\subseteq\Q(\zeta_N)\subset\C$ to define the complex resolvent
\begin{equation}\label{eq:resolvent}
        R_\chi(\alpha_K)
        :=
        \sum_{g\in G}
        \overline{\chi(g)}\,g(\alpha_K).
\end{equation}
The character $\chi$ may be inflated through
\[
        \Gamma\longrightarrow\Gamma/H\simeq G.
\]
Using the canonical identification
\[
        \Gamma\simeq(\Z/N\Z)^\times,
\]
we may therefore regard $\chi$ as a Dirichlet character modulo $N$.

Let $f_\chi=f_\chi(K)$ denote the conductor of the primitive Dirichlet
character which induces this character modulo $N$.  Since $f_\chi\mid N$,
and $N$ is squarefree, the quotient $N/f_\chi$ is automatically coprime to
$f_\chi$.

We denote by
\[
        \tau_q(\eta)
        :=
        \sum_{a\bmod q}
        \eta(a)e^{2\pi ia/q}
\]
the Gauss sum of a Dirichlet character $\eta$ modulo $q$, where
$\eta(a)=0$ whenever $(a,q)>1$.

Expanding the definition of $\alpha_K$ and grouping the elements of
$\Gamma$ by cosets of $H$, we obtain
\begin{align}
        R_\chi(\alpha_K)
        &=
        \sum_{g\in G}
        \overline{\chi(g)}
        \sum_{h\in H}gh(\zeta_N)\notag\\
        &=
        \sum_{\gamma\in\Gamma}
        \overline{\chi(\gamma)}
        \gamma(\zeta_N)\notag\\
        &=
        \sum_{a\in(\Z/N\Z)^\times}
        \overline{\chi(a)}\zeta_N^a
        =
        \tau_N(\overline\chi).
        \label{eq:resolvent-gauss}
\end{align}
Thus the size of the resolvent is governed by an imprimitive Gauss sum.

For reference, the standard theory of induced characters and Gauss sums is
developed in \cite[\S\S8.5--8.10]{ApostolANT}.

\begin{lemma}\label{lem:imprimitive-gauss}
Let $\chi$ be a Dirichlet character modulo a squarefree integer $N$, and
suppose that $\chi$ is induced by a primitive character $\chi^*$ of conductor
$f$.  Writing $N=fm$, note that $(f,m)=1$. Then we find that
\begin{equation}\label{eq:imprimitive-gauss}
        \tau_N(\chi)
        =
        \mu(m)\chi^*(m)\tau_f(\chi^*).
\end{equation}
In particular,
\begin{equation}\label{eq:imprimitive-gauss-abs}
        |\tau_N(\chi)|^2=f.
\end{equation}
\end{lemma}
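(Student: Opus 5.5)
The approach is to use the Chinese remainder theorem to factor $\tau_N(\chi)$ into Gauss sums modulo $f$ and modulo $m$. Since $(f,m)=1$, the isomorphism $(\Z/N\Z)^\times\simeq(\Z/f\Z)^\times\times(\Z/m\Z)^\times$ lets us parametrize residues $a\bmod N$ coprime to $N$ as $a=um+vf$, with $u$ ranging over $(\Z/f\Z)^\times$ and $v$ over $(\Z/m\Z)^\times$. This is the standard parametrization $a\equiv um\pmod f$, $a\equiv vf\pmod m$. Then
\[
\zeta_N^{a}=\zeta_f^{u}\,\zeta_m^{v}.
\]
Because $\chi$ is induced by $\chi^*$, we have $\chi(a)=\chi^*(a)$ for $(a,N)=1$. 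Also $\chi^*(a)=\chi^*(um)=\chi^*(u)\chi^*(m)$, since $\chi^*$ has modulus $f$ and $a\equiv um\pmod f$.

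Substituting into the definition of $\tau_N(\chi)$ gives
\[
\tau_N(\chi)=\chi^*(m)\Bigl(\sum_{u\in(\Z/f\Z)^\times}\chi^*(u)\zeta_f^{u}\Bigr)\Bigl(\sum_{v\in(\Z/m\Z)^\times}\zeta_m^{v}\Bigr)=\chi^*(m)\,\tau_f(\chi^*)\,c_m(1).
\]
Here $c_m(1)$ is the Ramanujan sum, the sum of the primitive $m$-th roots of unity, which equals $\mu(m)$. This is the same fact already cited for $\Tr_{M/\Q}(\zeta_N)=\mu(N)$ in Proposition~\ref{prop:hilbert-speiser}. Together these give \eqref{eq:imprimitive-gauss}.

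For \eqref{eq:imprimitive-gauss-abs}, note that $m$ is squarefree, being a divisor of the squarefree $N$, so $|\mu(m)|=1$. Also $(m,f)=1$, so $\chi^*(m)$ is a root of unity and $|\chi^*(m)|=1$. It remains to invoke the classical identity $|\tau_f(\chi^*)|^2=f$ for primitive characters \cite[\S8.10]{ApostolANT}.

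The only delicate point is the bookkeeping in the CRT factorization: checking that $a=um+vf$ really runs over the units mod $N$ exactly once, and that the twist comes out as $\chi^*(m)$ rather than $\overline{\chi^*(m)}$. With the chosen parametrization, $a\equiv um\pmod f$, so the factor is $\chi^*(m)$, as claimed.
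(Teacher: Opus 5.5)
Your proof is correct and follows the same route as the paper: a CRT splitting of the units modulo $N=fm$ factors $\tau_N(\chi)$ as $\chi^*(m)\tau_f(\chi^*)$ times the Ramanujan sum $c_m(1)=\mu(m)$, and the classical identity $|\tau_f(\chi^*)|^2=f$ then gives the absolute value. The differences are cosmetic: your parametrization $a=um+vf$ produces the twist $\chi^*(m)$ directly, whereas the paper uses $a\equiv um\overline m+vf\overline f$ and recovers it as $\overline{\chi^*}(\overline m)$; the paper also reproves $|\tau_f(\chi^*)|^2=f$ via Parseval rather than only citing it.
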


\begin{proof}
Since $N$ is squarefree and $f\mid N$, the integers $f$ and
$m=N/f$ are coprime.  Choose integers $\overline m$ and $\overline f$
satisfying
\[
        m\overline m\equiv1\pmod f,
        \qquad
        f\overline f\equiv1\pmod m.
\]
By the Chinese remainder theorem, a residue class $a$ modulo $N=fm$ is
determined by the pair
\[
        (u,v)=(a\bmod f,a\bmod m),
\]
and one may choose the representative
\[
        a\equiv
        um\overline m+vf\overline f
        \pmod{fm}.
\]
Consequently,
\begin{align*}
        e^{2\pi ia/N}
        &=
        e^{2\pi i u\overline m/f}
        e^{2\pi i v\overline f/m}\\
        &=
        \zeta_f^{u\overline m}
        \zeta_m^{v\overline f}.
\end{align*}

If $a$ is a unit modulo $N$, then both $u$ and $v$ are units modulo
$f$ and $m$, respectively.  Since $\chi$ is induced from $\chi^*$, we have
\[
        \chi(a)=\chi^*(u).
\]
It follows that
\begin{align}
        \tau_N(\chi)
        &=
        \sum_{a\in(\Z/N\Z)^\times}
        \chi(a)\zeta_N^a\notag\\
        &=
        \left(
        \sum_{u\in(\Z/f\Z)^\times}
        \chi^*(u)\zeta_f^{u\overline m}
        \right)
        \left(
        \sum_{v\in(\Z/m\Z)^\times}
        \zeta_m^{v\overline f}
        \right).
        \label{eq:gauss-factorization}
\end{align}

We evaluate the two factors separately.  Since $\overline m$ is a unit
modulo $f$, multiplication by $\overline m$ permutes the reduced residue
classes modulo $f$.  The standard twisting identity for Gauss sums therefore
gives
\begin{align*}
        \sum_{u\in(\Z/f\Z)^\times}
        \chi^*(u)\zeta_f^{u\overline m}
        &=
        \overline{\chi^*}(\overline m)\tau_f(\chi^*).
\end{align*}
Since
\[
        m\overline m\equiv1\pmod f,
\]
we have
\[
        \chi^*(\overline m)=\chi^*(m)^{-1}.
\]
Character values have absolute value one, and hence
\[
        \overline{\chi^*}(\overline m)
        =
        \chi^*(m).
\]

For the second factor in \eqref{eq:gauss-factorization}, multiplication by
the unit $\overline f$ permutes $(\Z/m\Z)^\times$.  Therefore
\begin{align*}
        \sum_{v\in(\Z/m\Z)^\times}
        \zeta_m^{v\overline f}
        &=
        \sum_{v\in(\Z/m\Z)^\times}\zeta_m^v\\
        &=
        c_m(1)
        =
        \mu(m),
\end{align*}
where the last identity is the Ramanujan-sum evaluation recalled above.
Combining the two factors gives
\[
        \tau_N(\chi)
        =
        \mu(m)\chi^*(m)\tau_f(\chi^*),
\]
which proves \eqref{eq:imprimitive-gauss}.

For completeness, we also recall why a primitive Gauss sum has absolute
value $\sqrt f$.  Let $\eta$ be a primitive Dirichlet character modulo $f$
and, for $n\in\Z$, put
\[
        S(n)
        :=
        \sum_{a\bmod f}
        \eta(a)e^{2\pi ian/f}.
\]
If $(n,f)=1$, then multiplication by $n$ permutes the reduced residue
classes modulo $f$.  Making the substitution $a\mapsto n^{-1}a$ gives
\[
        S(n)
        =
        \overline{\eta(n)}\tau_f(\eta).
\]
If $(n,f)>1$, then $S(n)=0$.  Indeed, put $d=(n,f)$.  Since $\eta$ is
primitive, it cannot be trivial on the kernel of the reduction map from
$(\Z/f\Z)^\times$ to $(\Z/(f/d)\Z)^\times$.  Hence there exists a unit
$b\bmod f$ satisfying
\[
        b\equiv1\pmod{f/d},
        \qquad
        \eta(b)\neq1.
\]
The first congruence implies
\[
        bn\equiv n\pmod f.
\]
Replacing $a$ by $ba$ in the definition of $S(n)$ therefore gives
\[
        S(n)=\overline{\eta(b)}S(n),
\]
and since $\eta(b)\neq1$, it follows that $S(n)=0$.

Now apply Parseval's identity to the finite Fourier transform of the function
$a\mapsto\eta(a)$ on $\Z/f\Z$.  We obtain
\[
        \sum_{n\bmod f}|S(n)|^2
        =
        f\sum_{a\bmod f}|\eta(a)|^2
        =
        f\varphi(f).
\]
There are exactly $\varphi(f)$ residue classes $n\bmod f$ with
$(n,f)=1$, and on each of them
\[
        |S(n)|=|\tau_f(\eta)|.
\]
Consequently
\[
        \varphi(f)|\tau_f(\eta)|^2
        =
        f\varphi(f),
\]
and therefore
\begin{equation}\label{eq:primitive-gauss-absolute}
        |\tau_f(\eta)|^2=f.
\end{equation}
This is the classical absolute-value formula for primitive Gauss sums; see
also \cite[Theorem~8.15]{ApostolANT}.

Applying \eqref{eq:primitive-gauss-absolute} to $\eta=\chi^*$, and using
\[
        |\mu(m)|=|\chi^*(m)|=1,
\]
gives \eqref{eq:imprimitive-gauss-abs}.
\end{proof}

Applying Lemma~\ref{lem:imprimitive-gauss} to $\overline\chi$ in
\eqref{eq:resolvent-gauss}, whose primitive conductor is again $f_\chi(K)$,
we obtain the fundamental identity
\begin{equation}\label{eq:gauss-absolute}
        |R_\chi(\alpha_K)|^2=f_\chi(K).
\end{equation}

Let $B_0$ denote the standard positive-definite Hermitian form on
$\C[G]$ for which the elements of $G$ form an orthonormal basis.  Thus
\[
        B_0(g,h)=
        \begin{cases}
        1,&g=h,\\
        0,&g\neq h.
        \end{cases}
\]

The normal-basis isomorphism
\[
        \iota_K:\Z[G]\xrightarrow{\sim}\cO_K
\]
extends uniquely by $\R$-linearity to an isomorphism
\[
        \iota_{K,\R}:
        \R[G]\xrightarrow{\sim}K\otimes_\Q\R.
\]
We use this map to transport the Minkowski inner product from
$K\otimes_\Q\R$ to $\R[G]$.  Thus, for $a,b\in\R[G]$, we define
\begin{equation}\label{eq:BK-definition}
        B_K(a,b)
        :=
        \bigl\langle\iota_{K,\R}(a),\iota_{K,\R}(b)\bigr\rangle_K.
\end{equation}
In this way $\iota_{K,\R}$ becomes an isometry between
$(\R[G],B_K)$ and $(K\otimes_\Q\R,\langle\,\cdot,\cdot\,\rangle_K)$.

In order to use the character decomposition of the group algebra, we
next pass from $\R[G]$ to its complexification
\[
        \C[G]
        =
        \R[G]\otimes_\R\C.
\]
The real inner product $B_K$ has a unique Hermitian extension to
$\C[G]$.  We continue to denote this extension by $B_K$.  Explicitly,
if $a_1,\ldots,a_s,b_1,\ldots,b_t\in\R[G]$ and
$z_1,\ldots,z_s,w_1,\ldots,w_t\in\C$, then
\[
        B_K\left(
        \sum_{i=1}^s z_i a_i,
        \sum_{j=1}^t w_j b_j
        \right)
        =
        \sum_{i=1}^s\sum_{j=1}^t
        z_i\overline{w_j}\,B_K(a_i,b_j).
\]
Thus $B_K$ is linear in the first variable and conjugate-linear in the
second.

Because $K/\Q$ is Galois, the action of $G$ permutes the embeddings of $K$.
Hence $B_K$ is $G$-invariant:
\begin{equation}\label{eq:BK-invariant}
        B_K(\sigma x,\sigma y)=B_K(x,y)
        \qquad
        (\sigma\in G).
\end{equation}
The same is clearly true of $B_0$.

For $\chi\in\widehat G$, put
\begin{equation}\label{eq:vchi}
        v_\chi
        :=
        \sum_{g\in G}\overline{\chi(g)}g.
\end{equation}
Then for $\sigma\in G$ we have that
\[
        \sigma v_\chi
        =
        \chi(\sigma)v_\chi,
\]
and therefore $v_\chi$ spans the one-dimensional $\chi$-eigenspace in $\C[G]$. If
$\chi\neq\psi$, then there exists $\sigma\in G$ such that
$\chi(\sigma)\neq\psi(\sigma)$.  By $G$-invariance,
\begin{align*}
        B_K(v_\chi,v_\psi)
        &=
        B_K(\sigma v_\chi,\sigma v_\psi)\\
        &=
        \chi(\sigma)\overline{\psi(\sigma)}
        B_K(v_\chi,v_\psi).
\end{align*}
Since
\[
        \chi(\sigma)\overline{\psi(\sigma)}\neq1,
\]
we must have
\[
        B_K(v_\chi,v_\psi)=0.
\]
Thus the character lines are mutually orthogonal.  This is also the
one-dimensional abelian case of Schur's lemma; see
\cite[Chapter~2, \S2.2]{Serre1977}.  The identical argument applies to
$B_0$.

We next compute the length of $v_\chi$.  Since the elements of $G$ form an
orthonormal basis for $B_0$,
\begin{align}
        B_0(v_\chi,v_\chi)
        &=
        \sum_{g\in G}|\chi(g)|^2
        =
        n.
        \label{eq:B0-vchi}
\end{align}
On the other hand,
\hypersetup{linkcolor=black,citecolor=black,urlcolor=black}\[
        \iota_K(v_\chi)
        =\sum_{g\in G}g(\alpha_K)\otimes\overline{\chi(g)}
        \in K\otimes_\Q\C.
\]
For $\sigma\in G$, let $\widetilde\sigma:K\otimes_\Q\C\to\C$ be the
$\C$-linear map $x\otimes z\mapsto\sigma(x)z$, using the fixed inclusion
$K\subset\C$. These maps give the embedding coordinates on the complexification,
so the Hermitian extension of the Minkowski form is the sum of the products
of corresponding coordinates, conjugating the second coordinate. We have
\begin{align*}
        \widetilde\sigma\bigl(\iota_K(v_\chi)\bigr)
        &=\sum_{g\in G}\overline{\chi(g)}\sigma g(\alpha_K)\\
        &=\chi(\sigma)\sum_{h\in G}\overline{\chi(h)}h(\alpha_K)\\
        &=\chi(\sigma)R_\chi(\alpha_K).
\end{align*}
Thus all embedding coordinates have absolute value $|R_\chi(\alpha_K)|$.
Using \eqref{eq:gauss-absolute}, we obtain
\begin{align}
        B_K(v_\chi,v_\chi)
        &=
        \sum_{\sigma\in G}
        \left|
        \widetilde\sigma\bigl(\iota_K(v_\chi)\bigr)
        \right|^2\notag\\
        &=
        \sum_{\sigma\in G}
        |\chi(\sigma)|^2
        |R_\chi(\alpha_K)|^2\notag\\
        &=
        n|R_\chi(\alpha_K)|^2\notag\\
        &=
        nf_\chi(K).\\
        \label{eq:BK-vchi}
\end{align}
Comparing \eqref{eq:B0-vchi} and \eqref{eq:BK-vchi}, and using the fact
that the $\chi$-eigenspace is one-dimensional, gives
\begin{equation}\label{eq:metric-character-line}
        B_K|_{\C v_\chi}
        =
        f_\chi(K)B_0|_{\C v_\chi}.
\end{equation}

Next we group the complex character lines into rational blocks.
Suppose that $\chi'$ is Galois conjugate to $\chi$.  If $\chi$ has order
$d$, then $\chi'=\chi^a$ for some $a\in(\Z/d\Z)^\times$. The characters $\chi$ and $\chi^a$ have the same kernel on every inertia
subgroup. Hence $f_{\chi'}(K)=f_\chi(K)$ and thus the conductor is constant on every Galois orbit
$\omega\subseteq\widehat G$, and we denote the common value by $f_\omega(K)$.

Recall that $V_\omega$ is the rational irreducible summand corresponding to
the orbit $\omega$, and that $Q_\omega$ is the restriction of $B_0$ to
$V_\omega\otimes_\Q\R$.  Grouping
\eqref{eq:metric-character-line} over all $\chi\in\omega$ gives
\begin{equation}\label{eq:metric-rational-block}
        B_K|_{V_\omega\otimes_\Q\R}
        =
        f_\omega(K)Q_\omega.
\end{equation}
Denote by $\Omega(G)$ the set of Galois orbits of nontrivial complex characters of $G$. We have therefore proved the following key result.

\begin{theorem}\label{thm:main}
Let $G$ be a nontrivial finite abelian group of order $n$, and let $K/\Q$ be a
tamely ramified $G$-extension of conductor $N$.  Put
\[
        \alpha_K
        =
        \Tr_{\Q(\zeta_N)/K}(\zeta_N).
\]
Then the following statements hold.
\begin{enumerate}
\item
The conjugates of $\alpha_K$ form a normal integral basis of $\cO_K$.

\item
Under the resulting identification
\[
        \iota_K:\Z[G]\xrightarrow{\sim}\cO_K,
\]
the scaled projected lattice $\cO_K^\perp$ is identified with the fixed
lattice $L_G$.

\item
With respect to the rational decomposition
\[
        \ker(\eps)\otimes_\Z\R
        =
        \bigoplus_{\omega\in\Omega(G)}
        V_\omega\otimes_\Q\R,
\]
the pulled-back Minkowski form is
\begin{equation}\label{eq:BK-block-decomposition}
        B_K|_{\ker(\eps)\otimes\R}
        =
        \bigoplus_{\omega\in\Omega(G)}
        f_\omega(K)Q_\omega.
\end{equation}
Consequently,
\begin{equation}\label{eq:shape-main}
        \sh(K)
        =
        \sh\left(
        L_G,
        \bigoplus_{\omega\in\Omega(G)}
        f_\omega(K)Q_\omega
        \right).
\end{equation}
This is to say that when $\cO_K^\perp$ is identified with $L_G$ the shape is given by the form
\[ \bigoplus_{\omega\in\Omega(G)}
        f_\omega(K)Q_\omega.\]
\end{enumerate}
\end{theorem}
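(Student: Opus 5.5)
The plan is to assemble the three assertions from the computations already carried out in this section. Assertion (1) is Proposition~\ref{prop:hilbert-speiser}(2); tameness forces $N$ odd and squarefree by part (1). For assertion (2), I will invoke \eqref{eq:projected-group-ring}. It rests on the two identities \eqref{eq:trace-iota} and \eqref{eq:NG-image}, and both use $\Tr_{K/\Q}(\alpha_K)=\mu(N)$. Together they show that $x\mapsto x^\perp$ on $\cO_K=\iota_K(\Z[G])$ corresponds to $a\mapsto na-\eps(a)N_G$ on $\Z[G]$. This gives \eqref{eq:projected-identification}, namely $\cO_K^\perp=\iota_K(L_G)$. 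I would also note that $\eps(na-\eps(a)N_G)=n\eps(a)-n\eps(a)=0$, so $L_G\subset\ker(\eps)$. The extension $\iota_{K,\R}$ then carries $\ker(\eps)\otimes\R$ onto the trace-zero hyperplane $K^0_\R$, because $\Tr_{K/\Q}(\iota_K(a))=\mu(N)\eps(a)$.

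For assertion (3), I would first check the decomposition $\ker(\eps)\otimes\R=\bigoplus_{\omega\in\Omega(G)}V_\omega\otimes\R$. Multiplication by $e_{\mathbf 1}=N_G/n$ is the projection onto $\Q N_G$ in \eqref{eq:rational-decomposition}. One has $aN_G=\eps(a)N_G$, so $ae_{\mathbf 1}=0$ exactly when $\eps(a)=0$. Hence $\ker(\eps)\otimes\Q=(1-e_{\mathbf 1})\Q[G]=\bigoplus_\omega V_\omega$. Next, \eqref{eq:metric-character-line} gives $B_K=f_\chi(K)B_0$ on each complex line $\C v_\chi$. By the $G$-invariance argument (Lemma~\ref{lem:character-lines-orthogonal}), distinct character lines are orthogonal for both $B_K$ and $B_0$. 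Therefore on $V_\omega\otimes\C=\bigoplus_{\chi\in\omega}\C v_\chi$ the form $B_K$ equals $\sum_{\chi\in\omega}f_\chi(K)B_0|_{\C v_\chi}$. Since $f_\chi(K)=f_\omega(K)$ is constant on $\omega$, this is $f_\omega(K)B_0$, which is \eqref{eq:metric-rational-block}. Blocks $V_\omega\otimes\C$ for distinct $\omega$ are sums of distinct character lines, so they are orthogonal under both forms. Restricting these Hermitian identities to real points gives \eqref{eq:BK-block-decomposition}.

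Finally, $\iota_{K,\R}$ is an isometry from $(\ker(\eps)\otimes\R,B_K)$ onto $(K_\R^0,\langle\,,\rangle_K)$, and it carries $L_G$ to $\cO_K^\perp$. So the lattices $(L_G,\bigoplus_\omega f_\omega Q_\omega)$ and $\cO_K^\perp$ are isometric and in particular similar, which gives \eqref{eq:shape-main}.

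The only step needing real care is the passage from the Hermitian identity to the real one. It relies on the fact that $B_K$ and $B_0$ are the Hermitian extensions of real forms, and that $V_\omega\otimes\R$ is exactly the real subspace of $\bigoplus_{\chi\in\omega}\C v_\chi$. The latter holds because $\omega$ is closed under complex conjugation, $\overline\chi=\chi^{-1}$ being a Galois conjugate. Everything else is bookkeeping of results already proved.
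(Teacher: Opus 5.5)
Your proposal is correct and follows the paper's proof step for step: (1) comes from the Hilbert--Speiser proposition, (2) from the trace identity $\Tr_{K/\Q}(\alpha_K)=\mu(N)$ giving $\cO_K^\perp=\iota_K(L_G)$, and (3) from the orthogonality of character lines, $B_K=f_\chi(K)B_0$ on each $\C v_\chi$, and the constancy of $f_\chi$ on Galois orbits. The extra points you check are all correct and are left implicit in the paper: that $\ker(\eps)\otimes\Q=(1-e_{\mathbf 1})\Q[G]=\bigoplus_\omega V_\omega$, that distinct blocks are orthogonal, the restriction from the Hermitian form to the real one, and that $\iota_{K,\R}$ carries $\ker(\eps)\otimes\R$ isometrically onto $K_\R^0$.
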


\begin{proof}
The first assertion is Proposition~\ref{prop:hilbert-speiser}. Choose a marking $\iota:\Gal(K/\Q)\xrightarrow{\sim}G$.
Together with the normal integral basis generator $\alpha_K$, this gives the
$\Z[G]$-module isomorphism $\iota_K:\Z[G]\xrightarrow{\sim}\cO_K$ defined by $a\mapsto a\alpha_K$. By Proposition~\ref{prop:hilbert-speiser},
\[
        \Tr_{K/\Q}(\alpha_K)=\mu(N),
\]
and the calculation in \eqref{eq:trace-iota}--\eqref{eq:projected-group-ring}
shows that
\[
        n\iota_K(a)
        -
        \Tr_{K/\Q}\bigl(\iota_K(a)\bigr)
        =
        \iota_K\bigl(na-\eps(a)N_G\bigr)
\]
and hence $\cO_K^\perp=\iota_K(L_G)$ which proves the second assertion.

Equations \eqref{eq:B0-vchi} and \eqref{eq:BK-vchi} give $B_0(v_\chi,v_\chi)=n$ and $B_K(v_\chi,v_\chi)=nf_\chi(K)$. The character lines are pairwise orthogonal for both $B_0$ and $B_K$. The conductor $f_\chi(K)$ is constant on a Galois orbit
$\omega\subseteq\widehat G$. Therefore we obtain
\[
        B_K|_{\ker(\eps)\otimes_\Z\R}
        =
        \bigoplus_{\omega\in\Omega(G)}
        f_\omega(K)Q_\omega
\]
and this proves the third assertion. The shape of $K$ is the similarity class of $\cO_K^\perp$ equipped with the metric $B_K$.  Identifying $\cO_K^\perp$ with $L_G$ gives
\eqref{eq:shape-main}.
\end{proof}

The lattice $L_G$ and the forms
$Q_\omega$ are fixed once $G$ has been fixed.  The field $K$ gives rise to the tuple $\bigl(f_\omega(K)\bigr)_{\omega\in\Omega(G)}$.
Multiplying all of these scalars by the same positive number multiplies the
entire metric by that number and therefore leaves its similarity class
unchanged.  Thus the natural parameter space for these metrics is the
positive diagonal torus
\begin{equation}\label{eq:TG}
        T_G
        :=
        (\R_{>0})^{\Omega(G)}/\R_{>0},
\end{equation}
where $\R_{>0}$ acts diagonally.

It is convenient to pass to logarithmic coordinates.  Put $E_G
        :=
        \R^{\Omega(G)}/\R\mathbf1$ where $\mathbf1$ is the vector whose coordinates are all equal to $1$.
For a marked $G$-extension $(K,\iota)$, define its logarithmic conductor
parameter by
\begin{equation}\label{eq:lambdaK}
        \lambda(K,\iota)
        :=
        \left[
        \bigl(\log f_\omega(K)\bigr)_{\omega\in\Omega(G)}
        \right]
        \in E_G.
\end{equation}

The corresponding map $\Phi_G:E_G\longrightarrow\mathscr S_{n-1}$ is given by
\begin{equation}\label{eq:shape-parameter-map}
        [y]\longmapsto
        \sh\left(
        L_G,
        \bigoplus_{\omega\in\Omega(G)}
        e^{y_\omega}Q_\omega
        \right).
\end{equation}
This is well defined, since replacing every $y_\omega$ by
$y_\omega+c$ multiplies the entire metric by the common factor $e^c$.
The map is continuous, and Theorem~\ref{thm:main} says precisely
that
\begin{equation}\label{eq:shape-via-Phi}
        \sh(K)=\Phi_G\bigl(\lambda(K,\iota)\bigr).
\end{equation}

There is a finite symmetry which appears when the marking is forgotten.
An automorphism $\theta\in\Aut(G)$
permutes the standard basis of $\Z[G]$.  It therefore preserves the
augmentation map, the element $N_G$, and the lattice $L_G$.  It also
permutes the complex characters, hence the rational character blocks
$V_\omega$, and preserves the standard form $B_0$.  Thus $\theta$ induces
an integral isometry between the metrics obtained by applying the
corresponding permutation to the coordinates
$(y_\omega)_{\omega\in\Omega(G)}$.  Consequently $\Phi_G$ is $\Aut(G)$-invariant and descends to a continuous map
\[
        E_G/\Aut(G)\longrightarrow\mathscr S_{n-1}.
\]
In particular, forgetting the marking is compatible with forgetting the
labels on the conductor coordinates.  We do not claim that every accidental
similarity between two lattices in the image of $\Phi_G$ must arise from an
automorphism of $G$.

Consider the normal integral basis
\[
        \{g(\alpha_K):g\in G\}
\]
of $\cO_K$.  With respect to the corresponding basis of $\Z[G]$, the
standard form $B_0$ has determinant $1$.  After complexifying, the form
$B_K$ is diagonal on the character lines, and
\[
        B_K|_{\C v_\chi}
        =
        f_\chi(K)B_0|_{\C v_\chi}.
\]
The trivial character has conductor $1$.  Taking determinants therefore
gives
\[
        \det(B_K)
        =
        \prod_{\chi\in\widehat G}f_\chi(K)
        =
        \prod_{\chi\in\widehat G\setminus\{1\}}f_\chi(K).
\]
On the other hand, the determinant of the Minkowski Gram matrix of an
integral basis is the absolute discriminant of $K$.  Hence
\begin{equation}\label{eq:conductor-discriminant}
        |\disc(K)|
        =
        \prod_{\chi\in\widehat G\setminus\{1\}}
        f_\chi(K).
\end{equation}
Grouping the characters into Galois orbits gives the equivalent expression
\begin{equation}\label{eq:conductor-discriminant-orbits}
        |\disc(K)|
        =
        \prod_{\omega\in\Omega(G)}
        f_\omega(K)^{|\omega|}
        =
        \prod_{\omega\in\Omega(G)}
        f_\omega(K)^{\dim_\Q V_\omega}.
\end{equation}

\section{Cyclic inertia coordinates}\label{sec:inertia-coordinates}

The character conductors diagonalize the metric, but for counting fields it is more natural to organize the ramified primes by inertia subgroup. We now pass between these two coordinate systems.

Let $p$ be a finite prime ramified in $K$, and let $I_p\leq G$ be its inertia subgroup. Since $K/\Q$ is tame, $I_p$ is cyclic and its order is prime to $p$. A character $\chi\in\widehat G$ is unramified at $p$ precisely when its restriction to $I_p$ is trivial. If the restriction is nontrivial, the character is tamely ramified and its local conductor exponent is $1$. Thus the global conductor is obtained simply by multiplying those ramified primes on which the character is nontrivial. Let $\cC(G)$ denote the set of nontrivial cyclic subgroups of $G$. Recall from \eqref{eq:qC} that $q_C(K)$ is the product of the
ramified primes whose inertia subgroup is $C$, with the empty
product understood to be $1$.

\begin{proposition}\label{prop:conductor-inertia}
For $\omega\in\Omega(G)$ and $\chi\in\omega$,
\begin{equation}\label{eq:conductor-inertia}
       f_\omega(K)=
       \prod_{\substack{C\in\cC(G)\\\chi|_C\neq1}}q_C(K).
\end{equation}
Equivalently, if $x_C=\log q_C(K)$ and $y_\omega=\log f_\omega(K)$, then $y=A_Gx$.
\end{proposition}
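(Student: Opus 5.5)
The plan is to compute the conductor $f_\chi(K)$ prime by prime, using its multiplicativity over primes. We view $\chi$ as a Dirichlet character modulo the squarefree conductor $N$. By the Chinese remainder theorem it factors as $\chi=\prod_{p\mid N}\chi_p$, where $\chi_p$ is a character of $(\Z/p\Z)^\times$. The primitive conductor is then $f_\chi=\prod_{p:\chi_p\neq1}p$, since $N$ is squarefree, each local exponent is at most one, and $\chi_p$ is primitive modulo $p$ exactly when it is nontrivial. So the proof reduces to identifying, for each prime $p$, when $\chi_p$ is trivial.

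The key step is to show that $\chi_p$ is trivial if and only if $\chi|_{I_p}=1$. Inside $\Gamma=\Gal(\Q(\zeta_N)/\Q)\simeq(\Z/N\Z)^\times$, the inertia group at $p$ is the factor $(\Z/p\Z)^\times$ in the CRT decomposition $\prod_{q\mid N}(\Z/q\Z)^\times$. This holds because $\Q(\zeta_N)=\Q(\zeta_p)\Q(\zeta_{N/p})$, where $p$ is totally ramified in the first field and unramified in the second. Inertia groups behave well under quotients: the inertia group of $p$ in $G=\Gamma/H$ is the image of the inertia group in $\Gamma$. Since $\chi$ is inflated from $G$, its restriction to the $(\Z/p\Z)^\times$-factor, which is $\chi_p$, is trivial exactly when $\chi$ is trivial on that image, namely on $I_p\le G$. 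This gives
\[
f_\chi(K)=\prod_{p\mid N,\ \chi|_{I_p}\neq1}p.
\]

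Next we regroup the product. Every $p\mid N$ is ramified in $K$, because $N$ is the conductor: otherwise $K$ would lie in $\Q(\zeta_{N/p})$. Conversely, every ramified prime divides $N$. Hence $I_p$ is a nontrivial cyclic subgroup, being a quotient of $(\Z/p\Z)^\times$, so $I_p\in\cC(G)$. Collecting the primes according to $C=I_p$ gives
\[
\prod_{C\in\cC(G),\ \chi|_C\neq1}q_C(K).
\]
Galois conjugates $\chi^a$ have the same kernel, so this expression depends only on $\omega$, which justifies writing $f_\omega(K)$. Finally, taking logarithms gives $y_\omega=\sum_C a_{\omega,C}x_C$, that is, $y=A_Gx$.

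The main obstacle is making the inertia identification rigorous under the chosen identifications $\Gamma\simeq(\Z/N\Z)^\times$ and $\Gamma/H\simeq G$ via the marking. In particular, we must check that the image of the $p$-factor is genuinely the inertia group $I_p$ of $K$ for the marking used to define $q_C(K)$. This is standard ramification theory in cyclotomic fields, and I would cite it rather than reprove it.
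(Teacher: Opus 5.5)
Your proof is correct. It follows the same outline as the paper's proof: compute $f_\chi(K)$ one prime at a time, show the local exponent is $1$ or $0$ according as $\chi|_{I_p}$ is nontrivial or trivial, regroup the primes by inertia subgroup, check invariance under Galois conjugation, and take logarithms. The difference is how you justify the local step.

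The paper uses the tame Artin conductor formula $a_p(\chi)=\dim\C-\dim\C^{I_p}$. This is quick, but it tacitly relies on the Artin conductor of $\chi$ agreeing with the conductor of the primitive Dirichlet character, which is how $f_\chi(K)$ was defined in Section~\ref{sec:normal-basis}. That agreement is a class-field-theoretic input, and the paper only gestures at the Dirichlet-character reading in a single sentence.

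You instead work directly from that definition. The factorization $\chi=\prod_{p\mid N}\chi_p$ from the Chinese remainder theorem computes the primitive conductor outright. The identification of the $p$-factor of $(\Z/N\Z)^\times$ with the inertia group at $p$ in $\Gal(\Q(\zeta_N)/\Q)$ then translates $\chi_p\neq 1$ into $\chi|_{I_p}\neq 1$; the paper proves this identification itself later, in Lemma~\ref{lem:tame-parametrization}. You also correctly supply the check that the primes dividing $N$ are exactly the ramified primes, which your route needs in order to regroup the product by inertia class.

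So your argument is more self-contained relative to the paper's own definitions and lemmas. The paper's argument is shorter and is the one that extends to Artin characters in settings where no cyclotomic description is available.
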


\begin{proof}
Fix a finite prime $p$.  Since $K/\Q$ is tame, the wild inertia group at $p$ is trivial.  For a one-dimensional character $\chi$ of $G$, the local Artin conductor exponent is therefore
\[
        a_p(\chi)=\dim(\C)-\dim(\C^{I_p})
        =\begin{cases}
        0,&\chi|_{I_p}=1,\\
        1,&\chi|_{I_p}\neq1.
        \end{cases}
\]
This can also be read directly in Dirichlet-character language: a tame local character is either unramified, in which case $p$ does not divide its conductor, or nontrivial on tame inertia, in which case its conductor contains $p$ to the first power and no higher power.

It follows that the primitive conductor of $\chi$ is
\begin{equation}\label{eq:fchi-local-product}
       f_\chi=
       \prod_{\substack{p\text{ ramified in }K\\\chi|_{I_p}\neq1}}p.
\end{equation}
Now fix a nontrivial cyclic subgroup $C\leq G$.  By definition,
\[
        q_C(K)=\prod_{I_p=C}p.
\]
The primes in \eqref{eq:fchi-local-product} are partitioned by their inertia subgroups.  Grouping them according to this partition gives
\[
       f_\chi=
       \prod_{\substack{C\in\cC(G)\\\chi|_C\neq1}}q_C(K),
\]
which is \eqref{eq:conductor-inertia}.

The condition $\chi|_C\neq1$ is invariant under Galois conjugation of $\chi$.  Indeed, if $\chi'$ is a Galois conjugate of $\chi$, then $\chi'=\chi^a$ for some integer $a$ relatively prime to the order of $\chi$.  If $\chi(c)$ has order $e$, then $(a,e)=1$, so $\chi(c)^a=1$ if and only if $\chi(c)=1$.  Hence the product depends only on the orbit $\omega$, as claimed.  Finally, taking logarithms of the product gives
\[
        \log f_\omega(K)=
        \sum_{C\in\cC(G)}a_{\omega,C}\log q_C(K),
\]
which is exactly the matrix relation $y=A_Gx$.
\end{proof}
A Galois orbit of a nontrivial character consists of all generators of one cyclic subgroup of $\widehat G$. Hence $\Omega(G)$ may be identified with the set of nontrivial cyclic subgroups of $\widehat G$. A finite abelian group is noncanonically isomorphic to its character group, so $G$ and $\widehat G$ have the same number of cyclic subgroups of each order. In particular
\[
       |\Omega(G)|=|\cC(G)|.
\]
Thus $A_G$ is square.

\begin{theorem}\label{thm:incidence}
Let $G$ be a nontrivial finite abelian group, and put
$m_G=|\cC(G)|=|\Omega(G)|$. Then
\[
        A_G\in\mathrm{GL}_{m_G}(\Q).
\]
In particular, for every tamely ramified $G$-extension $K/\Q$
with a fixed marking $\iota:\Gal(K/\Q)\xrightarrow{\sim}G$,
the vectors
\[
        x(K,\iota)
        :=\bigl(\log q_C(K,\iota)\bigr)_{C\in\cC(G)}\quad \text{and}\quad
        y(K,\iota)
        :=\bigl(\log f_\omega(K,\iota)\bigr)_{\omega\in\Omega(G)}
\]
satisfy $y(K,\iota)=A_Gx(K,\iota)$. Moreover,
\[
        \sh(K)
        =\Phi_G\bigl([A_Gx(K,\iota)]\bigr).
\]
Consequently,
\[
        \bigl\{
          \sh(K):
          K/\Q \text{ is tamely ramified and }
          \Gal(K/\Q)\simeq G
        \bigr\}
        \subseteq \Phi_G(E_G).
\]
\end{theorem}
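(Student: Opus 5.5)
The only substantive point is the invertibility of $A_G$. The other assertions follow from results already proved. The relation $y=A_Gx$ is Proposition~\ref{prop:conductor-inertia}, and the shape formula $\sh(K)=\Phi_G([A_Gx])$ comes from combining it with \eqref{eq:shape-via-Phi}. The containment in $\Phi_G(E_G)$ is then immediate.

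For invertibility, I will first rewrite the entries of $A_G$ in duality terms. For a cyclic subgroup $D=\langle\chi\rangle\le\widehat G$, corresponding to the orbit $\omega$, and for $C\in\cC(G)$, we have $a_{\omega,C}=0$ exactly when $\chi|_C=1$. This means $C\subseteq\ker\chi$, or equivalently $D\subseteq C^\perp:=\{\psi:\psi|_C=1\}$. So $a_{\omega,C}=1-[D\le C^\perp]$.

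Let $J$ be the all-ones matrix and $Z=(\,[D\le C^\perp]\,)_{D,C}$. Then $A_G=J-Z$. The map $C\mapsto C^\perp$ is an inclusion-reversing bijection between subgroups of $G$ and of $\widehat G$. I will therefore work directly with the zeta-type matrix $Z$, indexed by nontrivial cyclic $D\le\widehat G$ and nontrivial cyclic $C\le G$.

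My plan is to show $A_G$ invertible by exhibiting an explicit inverse. I would compute via Möbius-type inversion over subgroups, tensoring over primes. The main obstacle is that $C^\perp$ is generally not cyclic, so $Z$ is not literally a zeta matrix of one poset.

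My first attempt would go through the rational representation $\Q[G]$. For each $C$, the vector $(1-a_{\omega,C})_\omega$ records which blocks $V_\omega$ lie in $\Q[G]^C=\Q[G/C]$. The indicator functions of the subspaces $\Q[G/C]$, with $C$ ranging over cyclic subgroups including the trivial one, span the same space as the indicators of the individual $\omega$. This is an Artin-induction style statement: permutation characters of cyclic quotients, or equivalently characters induced from cyclic subgroups, span the rational character ring tensored with $\Q$. Equivalently, the matrix $\bigl([\omega\text{ trivial on }C]\bigr)_{\omega,C}$, with the trivial $C$ and trivial $\omega$ included, is the table of marks–type matrix $\bigl(\dim$ of fixed points$\bigr)$. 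It is invertible because the number of rational irreducibles equals the number of cyclic subgroups, and distinct cyclic $C$ give linearly independent permutation characters $\Q[G/C]$. That independence is Artin's theorem for abelian groups.

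Finally, I will remove the trivial row and column. Including them, the full matrix $\widetilde Z$ has the trivial-$\omega$ row equal to all ones and the trivial-$C$ column equal to all ones. Subtracting the all-ones row from the other rows gives $\pm$ the block $\operatorname{diag}(1,-A_G)$ up to permutation. Hence $\det A_G=\pm\det\widetilde Z\neq0$, which gives $A_G\in\mathrm{GL}_{m_G}(\Q)$.
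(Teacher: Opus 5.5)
Your proposal is correct and follows the paper's proof. The paper also adjoins the trivial character and trivial subgroup to form $B^+$, reads its columns as the coefficient vectors of the permutation characters $\rho_C=\operatorname{Ind}_C^G(1)$, and strips off the trivial row and column by subtracting the all-ones row to get $\det B^+=(-1)^{m_G}\det A_G$. The one difference is that you cite Artin's theorem for the linear independence of the $\rho_C$, whereas the paper proves it directly: it evaluates at generators $g_i$ of the cyclic subgroups, ordered by size, and obtains an upper triangular matrix with diagonal entries $[G:C_i]$.
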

\begin{proof}
We first prove that $A_G$ is nonsingular. Include the trivial
character and the trivial subgroup by setting
\[
        \Omega^+(G)=\Omega(G)\cup\{1\}\quad \text{and}\quad
        \cC^+(G)=\cC(G)\cup\{1\}.
\]
Here the additional element of $\Omega^+(G)$ denotes the orbit
consisting of the trivial character. Define
\[
        B^+=(b_{\omega,C})_{
          \omega\in\Omega^+(G),\,C\in\cC^+(G)},
      \quad \text{where}\quad
        b_{\omega,C}
        =
        \begin{cases}
        1,&\chi|_C=1,\\
        0,&\chi|_C\neq1,
        \end{cases}
\]
where $\chi$ is any member of $\omega$.

It is easy to see that this definition does not depend on the choice of
$\chi$. Indeed, if $\chi$ has order $d$, then every character in its Galois
orbit has the form $\chi^a$ with $(a,d)=1$. For every $g\in G$,
the value $\chi(g)$ is a $d$th root of unity, and therefore
\[
        \chi(g)^a=1
        \quad\Longleftrightarrow\quad
        \chi(g)=1.
\]
Thus $\chi$ and $\chi^a$ have the same kernel. In particular,
their restrictions to $C$ are either both trivial or both
nontrivial.

We shall also use the equality
\[
        |\Omega^+(G)|=|\cC^+(G)|=m_G+1.
\]
Indeed, the Galois orbit of a character $\chi$ consists precisely
of the generators of the cyclic subgroup $\langle\chi\rangle$
of $\widehat G$. Consequently, $\Omega^+(G)$ is in bijection
with the cyclic subgroups of $\widehat G$, including the trivial
subgroup. To compare their number with that for $G$, write
\[
        G\simeq C_{d_1}\times\cdots\times C_{d_r}.
\]
The character group of $C_{d_i}$ is cyclic of order $d_i$, so
$\widehat G$ has the same invariant factors as $G$. Hence $G$
and $\widehat G$ are isomorphic, although no particular
isomorphism is needed here. They therefore have the same number
of cyclic subgroups. In particular, $B^+$ is square.

For $\omega\in\Omega^+(G)$, let $V_\omega$ be the corresponding
irreducible rational representation, and write
\[
        \theta_\omega=\sum_{\chi\in\omega}\chi
\]
for its character. Since $G$ is abelian, its complex irreducible
representations are one-dimensional, and
\[
        V_\omega\otimes_\Q\C
        \simeq
        \bigoplus_{\chi\in\omega}\C e_\chi,
        \qquad
        g e_\chi=\chi(g)e_\chi.
\]
In particular,
\[
        \dim_\Q V_\omega=|\omega|.
\]
One can see this decomposition explicitly by choosing
$\chi\in\omega$ of order $d$: the representation $V_\omega$
is realized on $\Q(\zeta_d)$, with $g\in G$ acting by
multiplication by $\chi(g)$. The embeddings of $\Q(\zeta_d)$
into $\C$ give the character lines indexed by $\omega$.

Taking $C$-invariants commutes with extension of scalars from
$\Q$ to $\C$. Indeed, the averaging operator
\[
        \frac{1}{|C|}\sum_{c\in C}c
\]
is a projection onto the invariant subspace over either field.
If $b_{\omega,C}=1$, then every character in $\omega$ is trivial
on $C$, and every summand $\C e_\chi$ is fixed by $C$.
If $b_{\omega,C}=0$, none of these character lines has a
nonzero $C$-fixed vector. Consequently,
\begin{equation}\label{eq:fixed-dim-incidence}
        \dim_\Q(V_\omega^C)
        =(\dim_\Q V_\omega)b_{\omega,C}.
\end{equation}

For each $C\in\cC^+(G)$, let
\[
        \rho_C:=\operatorname{Ind}_C^G(1)
\]
be the character of the rational permutation representation
$\Q[G/C]$. Frobenius reciprocity gives, for every
$\chi\in\widehat G$,
\[
        \langle\rho_C,\chi\rangle_G
        =
        \langle 1,\chi|_C\rangle_C
        =
        \frac{1}{|C|}
        \sum_{c\in C}\overline{\chi(c)}
        =
        \begin{cases}
        1,&\chi|_C=1,\\
        0,&\chi|_C\neq1.
        \end{cases}
\]
The last equality follows from the character sum over $C$:
the sum is $|C|$ for the trivial character and zero for a
nontrivial character. Thus
\[
        \rho_C
        =
        \sum_{\substack{\chi\in\widehat G\\\chi|_C=1}}\chi
        =
        \sum_{\omega\in\Omega^+(G)}
        b_{\omega,C}\theta_\omega.
\]
It follows that the column of $B^+$ indexed by $C$ is the
coefficient vector of $\rho_C$ in the basis of irreducible
rational characters.

For completeness, these coefficients are compatible with
\eqref{eq:fixed-dim-incidence} as follows. Orthogonality of
the complex irreducible characters gives
\[
        \langle\theta_\omega,\theta_{\omega'}\rangle_G
        =
        \begin{cases}
        |\omega|,&\omega=\omega',\\
        0,&\omega\neq\omega'.
        \end{cases}
\]
Therefore
\[
        \langle\theta_\omega,\rho_C\rangle_G
        =
        |\omega|b_{\omega,C}
        =
        \dim_\Q(V_\omega^C).
\]
Thus the coefficient of $\theta_\omega$ in $\rho_C$ is
$b_{\omega,C}$, while the dimension of the fixed subspace is
$|\omega|b_{\omega,C}$.

We now prove directly that the characters $\rho_C$, with
$C\in\cC^+(G)$, are linearly independent. Enumerate the cyclic
subgroups as
\[
        C_1,\ldots,C_{m_G+1},
        \qquad
        |C_1|\leq\cdots\leq|C_{m_G+1}|,
\]
and choose a generator $g_i$ of each $C_i$. For the trivial
subgroup, take $g_i=1$. Since $G$ is abelian, an element
$g\in G$ fixes a coset $hC_j$ if and only if
\[
        ghC_j=hC_j
        \quad\Longleftrightarrow\quad
        g\in C_j.
\]
Thus $g$ fixes every coset when $g\in C_j$, and fixes no coset
otherwise. The permutation character consequently satisfies
\[
        \rho_{C_j}(g_i)
        =
        \begin{cases}
        [G:C_j],&C_i\subseteq C_j,\\
        0,&C_i\not\subseteq C_j.
        \end{cases}
\]

Consider the square evaluation matrix
\[
        M=\bigl(\rho_{C_j}(g_i)\bigr)_
          {1\leq i,j\leq m_G+1}.
\]
If $i>j$, then $|C_i|\geq|C_j|$. Inclusion
$C_i\subseteq C_j$ would force equality of the two subgroups
when their orders are equal, and is impossible when
$|C_i|>|C_j|$. Since the enumeration has no repetitions,
$M_{i,j}=0$ for $i>j$. Hence $M$ is upper triangular, with
diagonal entries
\[
        M_{i,i}=[G:C_i].
\]
It follows that
\[
        \det M=\prod_{i=1}^{m_G+1}[G:C_i]\neq0.
\]
Any linear relation among the characters $\rho_{C_j}$,
evaluated at $g_1,\ldots,g_{m_G+1}$, would therefore have all
its coefficients equal to zero.

There are $m_G+1$ such permutation characters and $m_G+1$
irreducible rational characters. Hence the $\rho_C$ form a
basis of the rational character space. This verifies directly
the basis assertion obtained from Artin induction in the
present abelian setting; cf.\ \cite[Chapter~9]{Serre1977}.
Their coefficient matrix $B^+$ is therefore nonsingular
over $\Q$.

Order the trivial character and the trivial subgroup first
in $B^+$. The trivial character is trivial on every subgroup,
and every character is trivial on the trivial subgroup.
Consequently,
\[
        B^+
        =
        \begin{pmatrix}
        1&\mathbf1^{\mathrm t}\\
        \mathbf1&B
        \end{pmatrix},
\]
where $B$ is indexed by $\Omega(G)$ and $\cC(G)$, and
$\mathbf1$ is the column vector of $m_G$ ones. By the
definition of $A_G$,
\[
        A_G=\mathbf1\mathbf1^{\mathrm t}-B.
\]
Subtracting the first row of $B^+$ from each of its remaining
rows gives
\[
        \begin{pmatrix}
        1&\mathbf1^{\mathrm t}\\
        0&B-\mathbf1\mathbf1^{\mathrm t}
        \end{pmatrix}
        =
        \begin{pmatrix}
        1&\mathbf1^{\mathrm t}\\
        0&-A_G
        \end{pmatrix}.
\]
These row operations preserve the determinant, so
\[
        \det(B^+)=\det(-A_G)
        =(-1)^{m_G}\det(A_G).
\]
Since $\det(B^+)\neq0$, we obtain $\det(A_G)\neq0$.
The entries of $A_G$ are integers, and hence its inverse has
rational entries. This proves
\[
        A_G\in\mathrm{GL}_{m_G}(\Q).
\]

Now fix a marked tame $G$-extension $(K,\iota)$. By
Proposition~\ref{prop:conductor-inertia}, for every
$\omega\in\Omega(G)$,
\[
        f_\omega(K,\iota)
        =
        \prod_{C\in\cC(G)}
        q_C(K,\iota)^{a_{\omega,C}}.
\]
Each $q_C(K,\iota)$ is positive, with value $1$ when no
ramified prime has inertia subgroup $C$. Taking logarithms
therefore gives
\[
        \log f_\omega(K,\iota)
        =
        \sum_{C\in\cC(G)}
        a_{\omega,C}\log q_C(K,\iota).
\]
These identities, one for each $\omega$, are exactly
\[
        y(K,\iota)=A_Gx(K,\iota).
\]
In particular, nonsingularity also gives
\[
        x(K,\iota)=A_G^{-1}y(K,\iota).
\]

Finally, recall that
\[
        E_G=\R^{\Omega(G)}/\R\mathbf1
\]
and that $\Phi_G$ is defined by
\[
        \Phi_G([y])
        =
        \sh\left(
        L_G,
        \bigoplus_{\omega\in\Omega(G)}
        e^{y_\omega}Q_\omega
        \right).
\]
This expression depends only on $[y]$: replacing $y$ by
$y+c\mathbf1$ multiplies the entire quadratic form by $e^c$,
which leaves its similarity class unchanged. For
$y=y(K,\iota)$, we have
$e^{y_\omega}=f_\omega(K,\iota)$. Theorem~\ref{thm:main}
therefore gives
\[
        \sh(K)
        =
        \sh\left(
        L_G,
        \bigoplus_{\omega\in\Omega(G)}
        f_\omega(K,\iota)Q_\omega
        \right)
        =
        \Phi_G([y(K,\iota)])
        =
        \Phi_G([A_Gx(K,\iota)]).
\]
Every tame extension whose Galois group is isomorphic to
$G$ admits a marking, and the displayed equality places its
shape in $\Phi_G(E_G)$. This proves the final inclusion.
\end{proof}

\section{Distribution of the shapes}\label{sec:distribution}

We now order the tame $G$-extensions by discriminant and study the resulting distribution in logarithmic conductor-parameter space. Throughout this section $G$ is a fixed nontrivial finite abelian group of order $n$.

For $C\in\cC(G)$ write $d_C=|C|$ and define
\begin{equation}\label{eq:aC}
        a_C:=n\left(1-\frac1{d_C}\right).
\end{equation}
These are exactly the exponents with which primes of inertia type $C$ enter the discriminant.

\begin{lemma}\label{lem:tame-discriminant}
Let $p$ be ramified in a tame $G$-extension $K/\Q$, and suppose $I_p=C$. Then
\[
        v_p(\disc(K))=a_C.
\]
Consequently
\begin{equation}\label{eq:disc-inertia}
        |\disc(K)|=
        \prod_{C\in\cC(G)}q_C(K)^{a_C}.
\end{equation}
\end{lemma}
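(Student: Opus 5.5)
The plan is to combine the conductor–discriminant formula \eqref{eq:conductor-discriminant} with the local conductor description used in the proof of Proposition~\ref{prop:conductor-inertia}. By \eqref{eq:conductor-discriminant},
\[
        v_p(\disc(K))=\sum_{\chi\in\widehat G\setminus\{1\}}v_p(f_\chi(K)).
\]
By \eqref{eq:fchi-local-product}, $v_p(f_\chi)=1$ if $\chi|_{I_p}\neq1$ and $v_p(f_\chi)=0$ otherwise. Hence $v_p(\disc(K))$ equals the number of characters $\chi\in\widehat G$ whose restriction to $C=I_p$ is nontrivial. The trivial character is automatically excluded, since it restricts trivially.

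Next I will count the characters trivial on $C$. Restriction $\widehat G\to\widehat C$ is surjective, because characters of subgroups of finite abelian groups extend. Its kernel is identified with the character group of $G/C$. So exactly $n/d_C$ characters are trivial on $C$, and the remaining $n-n/d_C=n(1-1/d_C)=a_C$ characters are nontrivial on $C$. This gives $v_p(\disc(K))=a_C$.

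An alternative check uses the tame local formula $v_p(\disc)=(n/(ef))\cdot f(e-1)=n(1-1/e)$ with $e=d_C$. Either route works; I would present the character count, since it only uses results already in the paper.

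Finally, I will assemble the global identity. Only ramified primes divide $\disc(K)$, and each ramified $p$ has $I_p$ equal to a unique nontrivial cyclic subgroup $C\in\cC(G)$. Grouping primes by inertia type gives
\[
        |\disc(K)|=\prod_p p^{v_p(\disc K)}=\prod_{C\in\cC(G)}\Bigl(\prod_{I_p=C}p\Bigr)^{a_C}=\prod_{C}q_C(K)^{a_C},
\]
which is \eqref{eq:disc-inertia}.

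There is no real obstacle. The only point needing care is the surjectivity of restriction $\widehat G\to\widehat C$, which follows from the standard duality for finite abelian groups, namely the exactness of $1\to\widehat{G/C}\to\widehat G\to\widehat C\to1$, or by a counting argument.
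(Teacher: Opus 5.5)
Your argument is correct, but it takes a different route from the paper. The paper proves the local formula directly from ramification theory. In the tame case $G_1=1$, so the different exponent at $\mathfrak p\mid p$ is $\sum_{i\ge0}(|G_i|-1)=e-1$. Taking the norm of the different over the $g$ primes above $p$ then gives $v_p(\disc(K))=gf(e-1)=n(1-1/e)$, which is exactly your ``alternative check''.

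You instead read the exponent off the global identity \eqref{eq:conductor-discriminant}, which the paper obtained in Section~\ref{sec:normal-basis} from the Gram determinant of $B_K$ on the character lines. You combine this with the tame conductor exponents in \eqref{eq:fchi-local-product} and a count of characters trivial on $C$. There is no circularity: both inputs are established before Section~\ref{sec:distribution}, and neither uses the lemma.

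The two routes buy different things. The paper's route is self-contained and more general: it holds verbatim for any tamely ramified Galois extension, abelian or not, and needs neither the normal integral basis nor the Gauss sum evaluation $|R_\chi(\alpha_K)|^2=f_\chi(K)$. Yours shows that the discriminant formula is a formal consequence of Theorem~\ref{thm:main} and Proposition~\ref{prop:conductor-inertia}; it is the conductor--discriminant formula specialized to tame characters. It also gives a consistency check between the conductor and inertia coordinate systems.

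One small remark: the surjectivity of $\widehat G\to\widehat C$, which you flag as the delicate point, is not actually needed. The characters trivial on $C$ are exactly the inflations of characters of $G/C$, and $|\widehat{G/C}|=|G/C|=n/d_C$ directly.
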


\begin{proof}
Fix a prime $\mathfrak p$ of $K$ above $p$.  Since $K/\Q$ is Galois, the ramification index $e=e(\mathfrak p/p)$, the residue degree $f=f(\mathfrak p/p)$, and the number $g$ of primes above $p$ are independent of the choice of $\mathfrak p$, and
\[
        n=[K:\Q]=efg.
\]
Moreover the inertia subgroup $I_p$ has order $e$, so $e=d_C$.

Let $K_{\mathfrak p}/\Q_p$ be the corresponding local extension.  For a finite Galois extension of local fields, the exponent of the different is
\[
        d(\mathfrak p/p)=\sum_{i\geq0}(|G_i|-1),
\]
where $G_i$ are the lower ramification groups.  Tameness means $G_1=1$, while $G_0=I_p$ has order $e$.  Hence
\[
        d(\mathfrak p/p)=e-1.
\]
The exponent of $p$ in the global discriminant is obtained by taking the norm of the global different.  Each of the $g$ primes above $p$ contributes residue degree $f$, and therefore
\begin{align*}
        v_p(\disc(K))
        &=\sum_{\mathfrak p\mid p}f(\mathfrak p/p)
          d(\mathfrak p/p)\\
        &=gf(e-1)\\
        &=\frac{n}{e}(e-1)\\
        &=n\left(1-\frac1e\right)
         =a_C.
\end{align*}
This proves the local formula.  Finally, every ramified prime belongs to exactly one inertia class $C$, and
\[
        q_C(K)=\prod_{I_p=C}p.
\]
Multiplying the local contributions over all ramified primes and grouping the primes according to $C$ gives
\[
        |\disc(K)|
        =\prod_{C\in\cC(G)}q_C(K)^{a_C}.
\]
\end{proof}

Let $\ell$ be the smallest prime divisor of $|G|$. Since every nontrivial cyclic subgroup of $G$ has order at least $\ell$, the smallest possible discriminant exponent is
\begin{equation}\label{eq:a0-alpha}
        a_0=n\left(1-\frac1\ell\right)
        =\frac{n(\ell-1)}{\ell}.
\end{equation}
We set $\alpha_G:=\frac1{a_0}
        =\frac{\ell}{n(\ell-1)}$. Equality $a_C=a_0$ holds precisely when $|C|=\ell$. We call these the \emph{minimal inertia subgroups} and put
\[
        \cM(G):=\{C\leq G:|C|=\ell\}.
\]
The group $G[\ell]$ is an $\F_\ell$-vector space, and the members of $\cM(G)$ are exactly its one-dimensional subspaces. Hence
\begin{equation}\label{eq:number-minimal}
        s_G:=|\cM(G)|
        =\frac{|G[\ell]|-1}{\ell-1}.
\end{equation}

We now pass from positive character conductors to logarithmic shape coordinates. Set
\[
        E_G:=\R^{\Omega(G)}/\R\mathbf1,
\]
and let $\pi:\R^{\Omega(G)}\to E_G$ be the quotient map. If $A_C$ denotes the column of $A_G$ indexed by $C$, put
\[
        v_C:=\pi(A_C)\in E_G.
\]
To label the character blocks and inertia subgroups by the fixed group $G$, we choose for the moment an isomorphism $\iota:\Gal(K/\Q)\xrightarrow{\sim}G$. We write $f_\omega(K,\iota)$ and $q_C(K,\iota)$ for the resulting labeled invariants. Proposition~\ref{prop:conductor-inertia} gives the logarithmic shape parameter
\begin{equation}\label{eq:log-shape}
        \lambda(K,\iota)
        :=\pi\bigl((\log f_\omega(K,\iota))_\omega\bigr)
        =\sum_{C\in\cC(G)}(\log q_C(K,\iota))v_C.
\end{equation}
The quotient by $\R\mathbf1$ exactly removes simultaneous scaling of all character blocks, and hence $E_G$ is the natural additive parameter space for the positive conductor torus. An automorphism of $G$ permutes the rational character blocks and the cyclic subgroups, and therefore acts linearly on $E_G$. Replacing $\iota$ by $\alpha\circ\iota$, with $\alpha\in\Aut(G)$, replaces $\lambda(K,\iota)$ by the corresponding $\Aut(G)$-translate. Thus the class
\[
        \overline\lambda(K)\in E_G/\Aut(G)
\]
is canonical for the unmarked field. This finite quotient is the natural parameter space when one counts fields without choosing a marking. The actual lattice shape is its image under the descended map $\Phi_G$; no classification of all integral similarities is being assumed.

Equations \eqref{eq:disc-inertia} and \eqref{eq:log-shape} express the discriminant and the logarithmic conductor parameter in the same coordinates. A factor $Y$ in $q_C$ contributes $Y^{a_C}$ to the discriminant and $(\log Y)v_C$ to the parameter. The exponent $a_C$ is smallest precisely for the order-$\ell$ subgroups. The counting argument below shows that these coordinates determine the limiting simplex.

We next turn the family of fields into an explicit family of squarefree tuples. It is convenient to count marked extensions first.

\begin{definition}
A \emph{marked $G$-extension} is a pair $(K,\iota)$ consisting of a Galois extension $K/\Q$ and an isomorphism $\iota:\Gal(K/\Q)\xrightarrow{\sim}G$.
\end{definition}

For a tuple $\mathbf q=(q_C)_{C\in\cC(G)}$ of positive integers, call $\mathbf q$ \emph{admissible} if the following hold:
\begin{enumerate}
\item every $q_C$ is squarefree;
\item the integers $q_C$ are pairwise coprime;
\item every prime $p\mid q_C$ satisfies $p\equiv1\pmod{d_C}$.
\end{enumerate}
Write $\omega(m)$ for the number of distinct prime divisors of a positive integer $m$, with $\omega(1)=0$. For an admissible tuple define
\begin{equation}\label{eq:weight-q}
        w(\mathbf q):=
        \prod_{C\in\cC(G)}\varphi(d_C)^{\omega(q_C)}.
\end{equation}

\begin{lemma}\label{lem:tame-parametrization}
Marked tame $G$-extensions of $\Q$ are parametrized by admissible tuples $\mathbf q$, together with, for every prime $p\mid q_C$, a surjection $\F_p^\times\twoheadrightarrow C$, subject to the generation condition
\begin{equation}\label{eq:generation-condition}
        \langle C:q_C>1\rangle=G.
\end{equation}
An admissible tuple satisfying \eqref{eq:generation-condition} gives exactly $w(\mathbf q)$ marked extensions. After summing over any collection of marked extensions which is stable under changing the marking, the corresponding number of unmarked fields is obtained by dividing by $|\Aut(G)|$.
\end{lemma}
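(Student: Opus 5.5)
We will use class field theory over $\Q$. A marked abelian $G$-extension $(K,\iota)$ is the same thing as a continuous surjection $\phi:\widehat{\Z}^\times\twoheadrightarrow G$. This is the Artin map composed with $\iota$: the kernel determines $K$, and $\iota$ is recovered from $\phi$. By the Chinese remainder theorem, $\widehat{\Z}^\times=\prod_p\Z_p^\times$, so $\phi$ is the product of its local components $\phi_p:\Z_p^\times\to G$. The image $\phi_p(\Z_p^\times)$ is the inertia group $I_p$ under the marking.

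The first step is to characterize tameness locally. By Proposition~\ref{prop:hilbert-speiser}(1), $K$ is tame exactly when its conductor $N$ is odd and squarefree. Equivalently, each $\phi_p$ factors through $\Z_p^\times\to\F_p^\times$, and $\phi_2$ is trivial because $(\Z/2\Z)^\times=1$. Thus a tame marked extension is the same as a finite set of odd primes $p$ together with nontrivial homomorphisms $\F_p^\times\to G$, whose images jointly generate $G$ (this is surjectivity of $\phi$). All other $\phi_p$ are trivial.

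Next we group primes by inertia. A homomorphism from the cyclic group $\F_p^\times$ has cyclic image $C$, so it is a surjection $\F_p^\times\twoheadrightarrow C$ for some $C\in\cC(G)$. Such a surjection exists iff $d_C\mid p-1$, and then there are exactly $\varphi(d_C)$ of them, one for each generator of $C$, since the quotient of $\F_p^\times$ of order $d_C$ is unique. Putting $q_C=\prod_{I_p=C}p$, the resulting tuple $\mathbf q$ is admissible. Its entries are squarefree, and they are pairwise coprime because each prime has a single inertia group. The congruence condition holds since $p\equiv1\pmod{d_C}$ and $p$ is odd; note that $p=2$ is excluded automatically, because $d_C\ge2$ and $2\not\equiv1\pmod{d_C}$. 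The generation condition \eqref{eq:generation-condition} is exactly surjectivity of $\phi$. Conversely, any admissible tuple satisfying \eqref{eq:generation-condition}, with a choice of surjection at each prime, gives a surjection $\phi$. The two constructions are mutually inverse, and the count of choices is $\prod_C\varphi(d_C)^{\omega(q_C)}=w(\mathbf q)$.

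For the last assertion, $\Aut(G)$ acts on marked extensions by $\iota\mapsto\alpha\circ\iota$. The action is free, because $\alpha\circ\iota=\iota$ forces $\alpha=1$. It is transitive on the markings of a fixed field, since any two markings differ by an automorphism of $G$. Hence every unmarked field has exactly $|\Aut(G)|$ markings, and dividing a count over a marking-stable collection by $|\Aut(G)|$ gives the number of fields.

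The main point needing care is the identification of the local image $\phi_p(\Z_p^\times)$ with the inertia subgroup $I_p$, together with tameness forcing $\phi_p$ to kill $1+p\Z_p$. Both follow from local class field theory: the local reciprocity map sends units onto inertia, and the higher unit groups $1+p\Z_p$ map onto wild inertia. Alternatively, one can use Proposition~\ref{prop:hilbert-speiser} directly, which gives $K\subseteq\Q(\zeta_N)$ with $N$ squarefree, and read inertia off the factor $(\Z/p\Z)^\times$ of $(\Z/N\Z)^\times$.
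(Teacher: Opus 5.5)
Your proposal is correct and follows essentially the paper's argument: a marked extension is encoded as a surjection from the Galois group of the cyclotomic tower, the CRT factors $\F_p^\times$ are identified with the inertia groups, surjections onto $C$ are counted by the generators of $C$, and the $\Aut(G)$-torsor structure on markings handles the passage to unmarked fields. The only difference is that you work with $\widehat{\Z}^\times$ rather than with $(\Z/N\Z)^\times$ at the conductor, which spares you the paper's check that the field built from an admissible tuple has conductor exactly $N=\prod_C q_C$.
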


\begin{proof}
Let $(K,\iota)$ be a marked tame $G$-extension and let $N$ be the conductor of $K$. Proposition~\ref{prop:hilbert-speiser} shows that $N$ is odd and squarefree.  Put $M=\Q(\zeta_N)$.  Restriction of automorphisms gives a surjection
\[
        \Gal(M/\Q)\longrightarrow\Gal(K/\Q),
\]
and after composing with the marking $\iota$ we obtain a surjective homomorphism
\begin{equation}\label{eq:KW-map}
        \rho:(\Z/N\Z)^\times\simeq\Gal(M/\Q)
        \longrightarrow G.
\end{equation}
Conversely, the fixed field of the kernel of any surjective map of the form \eqref{eq:KW-map} is a marked abelian extension with Galois group $G$.  We now describe exactly what tameness and squarefreeness of $N$ do to this map.

Because $N$ is squarefree, the Chinese remainder theorem gives a direct product decomposition
\begin{equation}\label{eq:CRT-local-factors}
        (\Z/N\Z)^\times
        \simeq\prod_{p\mid N}(\Z/p\Z)^\times
        =\prod_{p\mid N}\F_p^\times.
\end{equation}
For a fixed prime $p\mid N$, the factor $\F_p^\times$ in \eqref{eq:CRT-local-factors} is exactly the inertia subgroup at $p$ in the full cyclotomic extension $M/\Q$.  Indeed,
\[
        M=\Q(\zeta_p)\,\Q(\zeta_{N/p}).
\]
The first factor is totally ramified at $p$ with ramification group $(\Z/p\Z)^\times$, while the second factor is unramified at $p$ because its discriminant is supported on the primes dividing $N/p$.  Since the two factors are linearly disjoint, the inertia subgroup of $M/\Q$ at $p$ is precisely the $p$-factor in \eqref{eq:CRT-local-factors}.  Passing to the quotient $G$ shows that the image of this factor under $\rho$ is the inertia subgroup $I_p$ of $K/\Q$.

Let this image be $C$.  The restriction of $\rho$ to the $p$-factor is then a surjection
\[
        \rho_p:\F_p^\times\twoheadrightarrow C.
\]
Since $\F_p^\times$ is cyclic of order $p-1$, a surjection onto the cyclic group $C$ of order $d_C$ exists if and only if
\[
        d_C\mid p-1,
\]
or equivalently $p\equiv1\pmod{d_C}$.  If this divisibility holds, choose a generator $g_p$ of $\F_p^\times$.  A homomorphism is determined by $\rho_p(g_p)$, and it is surjective exactly when this image is a generator of $C$.  There are $\varphi(d_C)$ such generators.  Thus there are exactly $\varphi(d_C)$ local surjections with prescribed image $C$.

Group the ramified primes according to their inertia subgroups. For each $C\in\cC(G)$ set
\[
        q_C=\prod_{\substack{p\mid N\\\rho(\F_p^\times)=C}}p.
\]
Each prime divisor of $N$ appears in one and only one of these products.  Hence every $q_C$ is squarefree and the integers $q_C$ are pairwise coprime.  The preceding paragraph shows that every prime divisor of $q_C$ is congruent to $1$ modulo $d_C$.  Thus the tuple is admissible.

Conversely, suppose an admissible tuple $\mathbf q=(q_C)_C$ is given.  Put
\[
        N=\prod_C q_C.
\]
For every prime $p\mid q_C$, choose one of the $\varphi(d_C)$ surjections
\[
        \rho_p:\F_p^\times\twoheadrightarrow C\hookrightarrow G.
\]
Using the product decomposition \eqref{eq:CRT-local-factors}, the product of these local maps defines a homomorphism
\[
        \rho:(\Z/N\Z)^\times\longrightarrow G.
\]
Its image is generated by the images of the local factors, namely by the cyclic subgroups $C$ for which $q_C>1$.  Consequently $\rho$ is surjective if and only if
\[
        \langle C:q_C>1\rangle=G.
\]
When this condition holds, the fixed field of $\ker(\rho)$ has conductor exactly $N$: every $p\mid N$ has nontrivial local image, so it ramifies, and because the extension is tame its conductor exponent is one.  Thus no prime can be removed from $N$.  The resulting extension is therefore a tame marked $G$-extension with the prescribed inertia tuple.

The choices at distinct primes are independent.  For a fixed admissible tuple satisfying the generation condition, the number of collections of local surjections is therefore
\[
        \prod_C\varphi(d_C)^{\omega(q_C)}=w(\mathbf q),
\]
which proves the asserted weight formula.

Finally, let $K$ be an unmarked field with $\Gal(K/\Q)\simeq G$.  The set of isomorphisms
\[
        \Gal(K/\Q)\xrightarrow{\sim}G
\]
is a torsor under $\Aut(G)$, and hence contains exactly $|\Aut(G)|$ elements.  Changing the marking generally permutes the labels on the cyclic subgroups and therefore changes the labeled tuple.  For this reason division by $|\Aut(G)|$ is legitimate only after one has summed over a collection stable under changing the marking.  All passages from marked to unmarked fields below will be made in this way.
\end{proof}

The local multiplicity $\varphi(d_C)$ is the reciprocal of the Dirichlet density of the primes $p\equiv1\pmod{d_C}$. Their product is therefore one. This explains the simple pole associated with an inertia coordinate; for the minimal coordinates, we establish the required factorization below.

Fix a prime $\ell$, an integer $s\geq1$, and a positive integer $Q$. Consider tuples $(q_1,\ldots,q_s)$ of pairwise coprime squarefree integers such that $(q_1\cdots q_s,Q)=1$ and every prime divisor of every $q_i$ is congruent to $1$ modulo $\ell$. Give such a tuple the weight
\[
        w(\mathbf q)=(\ell-1)^{\omega(q_1)+\cdots+\omega(q_s)}.
\]
The multivariable Dirichlet series is
\begin{equation}\label{eq:minimal-euler-product}
        \mathcal F_Q(\mathbf z)
        =\prod_{\substack{p\nmid Q\\p\equiv1\;({\rm mod}\,\ell)}}
        \left(1+(\ell-1)\sum_{j=1}^sp^{-z_j}\right).
\end{equation}

\begin{lemma}\label{lem:zell-factorization}
There is a neighbourhood $U$ of $1$ and a holomorphic nonvanishing function $h_\ell$ on $U$ such that
\begin{equation}\label{eq:zell-factorization}
        Z_\ell(z):=
        \prod_{p\equiv1\;({\rm mod}\,\ell)}
        (1-p^{-z})^{-(\ell-1)}
        =\frac{h_\ell(z)}{z-1}
\end{equation}
for $z\in U$ with $\Re(z)>1$, and hence by continuation on $U\setminus\{1\}$.
\end{lemma}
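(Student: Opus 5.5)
We will compare $Z_\ell$ with the Dedekind zeta function of $\Q(\zeta_\ell)$, or equivalently with the product of the Dirichlet $L$-functions modulo $\ell$. Orthogonality of characters modulo $\ell$ gives, for $\Re(z)>1$,
\[
        \prod_{\eta\bmod \ell}L(z,\eta)
        =\prod_{p\neq\ell}\prod_{\eta}(1-\eta(p)p^{-z})^{-1}
        =\prod_{p\neq\ell}(1-p^{-f_pz})^{-(\ell-1)/f_p},
\]
where $f_p$ is the order of $p$ in $\F_\ell^\times$. The primes with $f_p=1$ are exactly those with $p\equiv1\pmod\ell$, and their factor is $(1-p^{-z})^{-(\ell-1)}$. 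Hence
\[
        \prod_{\eta\bmod\ell}L(z,\eta)=Z_\ell(z)\,E(z),
        \qquad
        E(z)=\prod_{\substack{p\neq\ell\\ p\not\equiv1\;({\rm mod}\,\ell)}}
        (1-p^{-f_pz})^{-(\ell-1)/f_p}.
\]

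The first step is to show that $E$ is holomorphic and nonvanishing near $z=1$. For the primes in $E$ we have $f_p\geq2$, so each factor equals $\exp$ of a series whose terms are bounded by $O(p^{-2\Re z})$. The product $E$ therefore converges absolutely, and defines a holomorphic nonvanishing function, on $\Re(z)>1/2$. This step is routine.

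The second step treats the $L$-functions. Write $\prod_\eta L(z,\eta)=\zeta(z)(1-\ell^{-z})\prod_{\eta\neq\eta_0}L(z,\eta)$. Here $\zeta(z)(z-1)$ is holomorphic and nonzero near $1$, and $(1-\ell^{-z})$ is nonzero there. Each $L(z,\eta)$ with $\eta\neq\eta_0$ is entire, and by Dirichlet's theorem $L(1,\eta)\neq0$. This is the main input: the nonvanishing of $L(1,\eta)$ is the only nontrivial fact in the argument, and we quote it as classical (cf.\ the references to Apostol already used in the paper). Choose a disc $U$ around $1$ inside $\Re(z)>1/2$ on which none of these holomorphic factors vanishes.

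Finally, set
\[
        h_\ell(z)=\frac{(z-1)\zeta(z)(1-\ell^{-z})\prod_{\eta\neq\eta_0}L(z,\eta)}{E(z)} .
\]
This function is holomorphic and nonvanishing on $U$, and by construction it satisfies $Z_\ell(z)=h_\ell(z)/(z-1)$ for $\Re(z)>1$, where all the Euler products converge. The right-hand side then provides the continuation of $Z_\ell$ to $U\setminus\{1\}$. The residue $h_\ell(1)$ can be expressed through $\prod_{\eta\neq\eta_0}L(1,\eta)$ and $E(1)$, which would feed into the explicit constant $C_G^{\rm mark}$.
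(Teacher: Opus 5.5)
Your argument is correct and is essentially the paper's: you compare $Z_\ell$ with $\prod_{\eta\bmod\ell}L(z,\eta)$, show that the primes $p\not\equiv1\pmod\ell$ contribute an Euler product with factors $1+O(p^{-2\Re z})$, hence holomorphic and nonvanishing on $\Re z>1/2$, and conclude with the simple pole of $\zeta$ and $L(1,\eta)\neq0$. The only difference is that you use the exact factorization $\prod_\eta(1-\eta(p)X)=(1-X^{f_p})^{(\ell-1)/f_p}$, whereas the paper needs only that the linear coefficient $-\sum_\eta\eta(p)$ vanishes by orthogonality; the paper also records $h_\ell(1)>0$, which your construction gives as well, since $(z-1)Z_\ell(z)>0$ for real $z>1$.
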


\begin{proof}
Let $\mathcal X_\ell$ denote the full group of Dirichlet characters modulo the prime $\ell$.  It has $\ell-1$ elements.  For $\Re(z)>1$ define
\[
        P_\ell(z)=\prod_{\chi\in\mathcal X_\ell}L(z,\chi).
\]
We compare its Euler factors with those of $Z_\ell(z)$.

If $p\neq\ell$ and $p\equiv1\pmod\ell$, then the class of $p$ is the identity in $(\Z/\ell\Z)^\times$, so $\chi(p)=1$ for every $\chi\in\mathcal X_\ell$.  The $p$-Euler factor of $P_\ell$ is therefore
\[
        \prod_{\chi\in\mathcal X_\ell}(1-\chi(p)p^{-z})^{-1}
        =(1-p^{-z})^{-(\ell-1)},
\]
which is exactly the $p$-Euler factor occurring in $Z_\ell$.

Suppose instead that $p\neq\ell$ and $p\not\equiv1\pmod\ell$.  There is no $p$-factor in $Z_\ell$, while the reciprocal of the $p$-factor of $P_\ell$ is
\[
        R_p(z):=\prod_{\chi\in\mathcal X_\ell}(1-\chi(p)p^{-z}).
\]
Expanding this finite product, the coefficient of $p^{-z}$ is
\[
        -\sum_{\chi\in\mathcal X_\ell}\chi(p).
\]
Character orthogonality on the finite group $(\Z/\ell\Z)^\times$ gives
\[
        \sum_{\chi\in\mathcal X_\ell}\chi(p)=0
\]
because the residue class of $p$ is not the identity.  Consequently
\[
        R_p(z)=1+O_\ell(p^{-2\Re(z)}),
\]
locally uniformly in $z$.  At the prime $p=\ell$, every Dirichlet $L$-function modulo $\ell$ omits the Euler factor, and $Z_\ell$ also omits $\ell$, so there is no additional factor.

It follows that the quotient
\[
        R_\ell(z):=\frac{Z_\ell(z)}{P_\ell(z)}
\]
can be represented, initially for $\Re(z)>1$, by an Euler product whose local factors are $1+O_\ell(p^{-2\Re(z)})$.  Hence this product converges absolutely and locally uniformly in every closed half-plane
\[
        \Re(z)\geq\frac12+\delta
\]
with $\delta>0$.  In particular, $R_\ell(z)$ extends to a holomorphic function in a neighbourhood of $z=1$.  Shrinking the neighbourhood if necessary, it is nonzero there: the absolutely convergent Euler product is nonzero at $1$, and holomorphic functions which are nonzero at a point remain nonzero on a sufficiently small neighbourhood.

 The principal character $\chi_0$ modulo $\ell$ satisfies
\[
        L(z,\chi_0)=\zeta(z)(1-\ell^{-z}),
\]
so it has a simple pole at $z=1$.  Every nonprincipal character modulo the prime $\ell$ is primitive, and Dirichlet's nonvanishing theorem gives
\[
        L(1,\chi)\neq0.
\]
Thus
\[
        P_\ell(z)
        =\frac{u_\ell(z)}{z-1}
\]
in a neighbourhood of $1$, where $u_\ell$ is holomorphic and nonzero.  Multiplying by the holomorphic nonzero factor $R_\ell(z)$ yields
\[
        Z_\ell(z)=\frac{h_\ell(z)}{z-1}\quad \text{and}\quad h_\ell(z)=R_\ell(z)u_\ell(z),
\]
\hypersetup{linkcolor=black,citecolor=black,urlcolor=black}and $h_\ell$ is holomorphic and nonvanishing near $1$. Moreover, $h_\ell(1)>0$: it is the nonzero limit of $(z-1)Z_\ell(z)>0$ as real $z$ decreases to $1$. This proves \eqref{eq:zell-factorization}.

The factorization also supplies the continuation required for the Tauberian theorem. Every nonprincipal Dirichlet $L$-function in $P_\ell$ is entire, and its principal factor has only the pole at $1$. The product $R_\ell$ is holomorphic for $\Re z>1/2$. Consequently $Z_\ell=R_\ell P_\ell$ is meromorphic in that half-plane, and its only pole there is the simple pole at $1$. In particular, it is holomorphic at every other point of the line $\Re z=1$.
\end{proof}

\begin{lemma}\label{lem:FQ-factorization}
Put
\[
        H_Q(\mathbf z)
        :=\mathcal F_Q(\mathbf z)
        \prod_{j=1}^sZ_\ell(z_j)^{-1}.
\]
There is a polydisc $\mathcal U$ centred at $(1,\ldots,1)$ such that $H_Q$ is holomorphic and nonvanishing on $\mathcal U$.  Consequently
\begin{equation}\label{eq:FQ-polar-factorization}
        \mathcal F_Q(\mathbf z)
        =\frac{C_Q(\mathbf z)}{(z_1-1)\cdots(z_s-1)},
\end{equation}
where $C_Q$ is holomorphic near $(1,\ldots,1)$ and
\begin{equation}\label{eq:cells-def}
        c_{\ell,s}(Q):=C_Q(1,\ldots,1)>0.
\end{equation}
Moreover,
\begin{align}\label{eq:explicit-cells}
 c_{\ell,s}(Q)
 &=h_\ell(1)^s
 \prod_{\substack{p\equiv1\;({\rm mod}\,\ell)\\p\nmid Q}}
 \left(1+\frac{s(\ell-1)}p\right)
 \left(1-\frac1p\right)^{s(\ell-1)}\notag\\
 &\quad\times
 \prod_{\substack{p\equiv1\;({\rm mod}\,\ell)\\p\mid Q}}
 \left(1-\frac1p\right)^{s(\ell-1)}.
\end{align}
In particular
\begin{equation}\label{eq:cQ-uniform-bound}
        0<c_{\ell,s}(Q)\leq c_{\ell,s}(1)
\end{equation}
for every positive integer $Q$.
\end{lemma}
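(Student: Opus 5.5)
The plan is to compare $\mathcal F_Q$ with $\prod_j Z_\ell(z_j)$ one Euler factor at a time, and then read off the constant using Lemma~\ref{lem:zell-factorization}. For $\Re z_j>1$ both products converge absolutely. Hence $H_Q$ is the Euler product over $p\equiv1\pmod\ell$ of the local factors
\[
        E_p(\mathbf z)=\Bigl(1+(\ell-1)\sum_{j=1}^s p^{-z_j}\Bigr)\prod_{j=1}^s(1-p^{-z_j})^{\ell-1}
\]
when $p\nmid Q$, and of $\prod_{j}(1-p^{-z_j})^{\ell-1}$ when $p\mid Q$. The prime $\ell$ does not occur, since $\ell\not\equiv1\pmod\ell$.

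First, for $p\nmid Q$ I will expand $E_p$. The linear terms $(\ell-1)\sum_j p^{-z_j}$ and $-(\ell-1)\sum_j p^{-z_j}$ cancel. Writing $\sigma=\min_j\Re z_j$, this leaves $E_p(\mathbf z)=1+O_{\ell,s}(p^{-2\sigma})$, uniformly on compact subsets. On the polydisc $\mathcal U$ of radius $1/4$ (say) around $(1,\ldots,1)$ we have $\sigma\ge 3/4$. So $\sum_p|E_p-1|$ converges uniformly, and the product over $p\nmid Q$ defines a holomorphic function on $\mathcal U$. The factors with $p\mid Q$ are finitely many. Each is holomorphic, and each is nonzero because $|p^{-z_j}|<1$.

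Nonvanishing is the step that needs care. Uniform convergence gives that only finitely many $E_p$ could possibly vanish; every $p$ beyond some bound $P_0$ has $|E_p-1|<1/2$ on $\mathcal U$. For the remaining small primes, I will shrink $\mathcal U$ so that each $E_p$ stays close to its value at $(1,\ldots,1)$. That value is $(1+s(\ell-1)/p)(1-1/p)^{s(\ell-1)}>0$, so each such factor is nonzero near the centre. A convergent product of nonvanishing factors with $\sum|E_p-1|<\infty$ is nonzero, and this gives the claim about $H_Q$ on a possibly smaller polydisc.

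To get the polar factorization \eqref{eq:FQ-polar-factorization}, I will set $C_Q(\mathbf z)=H_Q(\mathbf z)\prod_j h_\ell(z_j)$ and use $Z_\ell(z_j)=h_\ell(z_j)/(z_j-1)$ from Lemma~\ref{lem:zell-factorization}. Evaluating at the centre gives $c_{\ell,s}(Q)=h_\ell(1)^s\prod_pE_p(1,\ldots,1)$, which is exactly \eqref{eq:explicit-cells}. Positivity follows from $h_\ell(1)>0$ and from the positivity of every local factor at the centre.

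Finally, for the bound \eqref{eq:cQ-uniform-bound}, I will compare the factors with those for $Q=1$. For $p\mid Q$ the factor for $Q$ is $(1-1/p)^{s(\ell-1)}$, while the factor for $Q=1$ is $(1+s(\ell-1)/p)(1-1/p)^{s(\ell-1)}$. The former is smaller, since $1+s(\ell-1)/p>1$, and all factors for $p\nmid Q$ agree. Hence $c_{\ell,s}(Q)\le c_{\ell,s}(1)$.
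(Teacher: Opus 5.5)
Your proposal is correct and follows the paper's proof essentially step for step: the Euler-factor comparison with cancellation of the linear terms, the finitely many harmless factors at $p\mid Q$, $C_Q=H_Q\prod_j h_\ell(z_j)$ with positivity from $h_\ell(1)>0$, and the factor-by-factor comparison giving $c_{\ell,s}(Q)\le c_{\ell,s}(1)$. Your nonvanishing argument (tail factors within $1/2$ of $1$, with the finitely many small primes handled by shrinking the polydisc) is a slightly more explicit version of the paper's argument, which uses that $H_Q$ is positive at the centre and then shrinks the polydisc.
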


\begin{proof}
Consider first a prime $p\equiv1\pmod\ell$ with $p\nmid Q$.  The $p$-Euler factor of $H_Q$ is
\begin{equation}\label{eq:HQ-local-factor}
        \left(1+(\ell-1)\sum_{j=1}^sp^{-z_j}\right)
        \prod_{j=1}^s(1-p^{-z_j})^{\ell-1}.
\end{equation}
Write $X_j=p^{-z_j}$.  The first factor in \eqref{eq:HQ-local-factor} has linear part
\[
        1+(\ell-1)(X_1+\cdots+X_s),
\]
while
\[
        \prod_{j=1}^s(1-X_j)^{\ell-1}
        =1-(\ell-1)(X_1+\cdots+X_s)+O_{\ell,s}\!\left(\sum_{i,j}|X_iX_j|\right).
\]
Thus the terms of total degree one cancel, and the local factor is
\[
        1+O_{\ell,s}\!\left(\sum_{i,j}p^{-\Re(z_i)-\Re(z_j)}\right).
\]
Choose $\delta>0$ small.  If every $\Re(z_j)\geq1/2+\delta$, the error is $O_{\ell,s}(p^{-1-2\delta})$.  The corresponding Euler product therefore converges absolutely and locally uniformly in such a region.  This proves holomorphy of the infinite part of $H_Q$ in a neighbourhood of $(1,\ldots,1)$.

If $p\mid Q$ and $p\equiv1\pmod\ell$, then $p$ is excluded from the Euler product defining $\mathcal F_Q$. The only contribution to $H_Q$ is therefore
\[
        \prod_{j=1}^s(1-p^{-z_j})^{\ell-1}.
\]
Only finitely many primes divide $Q$, so these factors cause no convergence problem.  At the point $(1,\ldots,1)$ every local factor is positive.  Absolute convergence shows that the infinite product has a positive value there.  Shrinking the polydisc if necessary, the resulting function is nonvanishing throughout the polydisc.

Lemma~\ref{lem:zell-factorization} gives
\[
        Z_\ell(z_j)=\frac{h_\ell(z_j)}{z_j-1}.
\]
Therefore
\[
        \mathcal F_Q(\mathbf z)
        =H_Q(\mathbf z)
        \prod_{j=1}^s\frac{h_\ell(z_j)}{z_j-1},
\]
which is \eqref{eq:FQ-polar-factorization} with
\[
        C_Q(\mathbf z)=H_Q(\mathbf z)\prod_{j=1}^sh_\ell(z_j).
\]
Since $H_Q(1,\ldots,1)>0$ and $h_\ell(1)>0$, we have $c_{\ell,s}(Q)>0$.

The Euler products give a continuation to a larger domain. For fixed $Q$, $H_Q$ is holomorphic in the product of half-planes $\Re z_j>1/2$. Hence $\mathcal F_Q=H_Q\prod_j Z_\ell(z_j)$ is meromorphic in this product domain, with possible poles only on the coordinate hyperplanes $z_j=1$. This assertion remains valid after taking any fixed collection of partial derivatives. The continuation at points other than $(1,\ldots,1)$ will ensure that the one-variable series obtained on the diagonal satisfy the boundary hypotheses of the Tauberian theorem.

Evaluating the Euler factors at $z_1=\cdots=z_s=1$ gives \eqref{eq:explicit-cells}.  For $p\nmid Q$ the corresponding factor is
\[
        \left(1+\frac{s(\ell-1)}p\right)
        \left(1-\frac1p\right)^{s(\ell-1)}
        =1+O_{\ell,s}(p^{-2}),
\]
so the product converges.  If a prime $p\equiv1\pmod\ell$ is moved from the first product to the second because $p\mid Q$, the constant is multiplied by
\[
        \left(1+\frac{s(\ell-1)}p\right)^{-1}<1.
\]
Primes dividing $Q$ which are not congruent to $1$ modulo $\ell$ do not affect the product at all.  Hence \eqref{eq:cQ-uniform-bound} follows.
\end{proof}

We shall use several consequences of this factorization. We first specify the one-variable Tauberian input. Let $D(z)=\sum_{m\geq1}a_m m^{-z}$ have nonnegative coefficients and converge for $\Re z>1$. Suppose that it extends holomorphically across every point of $\Re z=1$ other than $1$, and that near $1$ it has the form $D(z)=H(z)(z-1)^{-k}$, where $k\geq1$ is an integer, $H$ is holomorphic, and $H(1)>0$. The Ikehara--Delange theorem then gives \[\sum_{m\leq Y}a_m\sim H(1)Y(\log Y)^{k-1}/\Gamma(k);\] see \cite[Th\'eor\`eme~III]{Delange1954}, and \cite{Tenenbaum2015} for the broader analytic framework. To pass from the Laplace-transform formulation to this statement, apply the theorem to $D(z)/z$, the Laplace transform of the nondecreasing function $t\mapsto\sum_{m\leq e^t}a_m$. The leading coefficient at $z=1$ is unchanged. The continuation along the rest of the boundary is part of the hypothesis and will be verified in each application.

\begin{lemma}\label{lem:box-counting}
Uniformly in $Q$ and $Y\geq1$,
\begin{equation}\label{eq:product-upper-bound}
        \sum_{\substack{q_1\cdots q_s\leq Y\\q_i\text{ as above}}}
        w(\mathbf q)
        \ll_{\ell,s}Y(\log 2Y)^{s-1}.
\end{equation}
\end{lemma}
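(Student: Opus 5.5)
The plan is to collapse the tuple into a single integer and use a Rankin-type bound, which removes any dependence on $Q$ by monotonicity of the coefficients. Write $m=q_1\cdots q_s$. Since the $q_i$ are pairwise coprime and squarefree, $m$ is squarefree, coprime to $Q$, and has all prime factors $\equiv1\pmod\ell$. The tuple is then an ordered factorization of $m$ into $s$ coprime pieces, equivalently an assignment of each prime of $m$ to one of $s$ slots. Hence the left side of \eqref{eq:product-upper-bound} equals
\[
        \sum_{\substack{m\leq Y\\ m\ \text{admissible}}}\bigl(s(\ell-1)\bigr)^{\omega(m)},
\]
and dropping the condition $(m,Q)=1$ only increases it, because all terms are nonnegative. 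So it suffices to treat $Q=1$, which gives the uniformity in $Q$ automatically.

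For $Q=1$ I will bound the sum of the multiplicative function $a(m)=(s(\ell-1))^{\omega(m)}\mathbf 1_{\text{admissible}}(m)$. There are two routes. The first is the Ikehara--Delange statement recalled above, applied to the diagonal series $\mathcal F_1(z,\ldots,z)$. By Lemma~\ref{lem:FQ-factorization} this series equals $C_1(z,\ldots,z)(z-1)^{-s}$ near $1$, and by the continuation remarks in that proof it is holomorphic at the other points of $\Re z=1$. The theorem then gives the asymptotic $\sim c\,Y(\log Y)^{s-1}$, hence the bound for large $Y$; for bounded $Y$ the bound is trivial. I prefer a self-contained elementary route, which also avoids checking the boundary hypothesis. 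It uses the standard inequality for nonnegative multiplicative functions,
\[
        \sum_{m\leq Y}a(m)\ll\frac{Y}{\log 2Y}\sum_{m\leq Y}\frac{a(m)}{m},
\]
valid when $a(p)\leq A$ and the function is supported on squarefree $m$; this is the Hall--Tenenbaum / Shiu-type lemma, proved via $\log m=\sum_{p\mid m}\log p$ and Chebyshev's bound. Alternatively, Rankin's trick with exponent $1+1/\log 2Y$ gives the same shape.

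The harmonic sum is then bounded by the Euler product:
\[
        \sum_{m\leq Y}\frac{a(m)}{m}\leq\prod_{\substack{p\leq Y\\ p\equiv1\,(\ell)}}\Bigl(1+\frac{s(\ell-1)}p\Bigr)\ll\exp\Bigl(s(\ell-1)\sum_{\substack{p\leq Y\\p\equiv1\,(\ell)}}\frac1p\Bigr).
\]
By Mertens' theorem in arithmetic progressions, $\sum_{p\leq Y,\,p\equiv1(\ell)}1/p=\frac{1}{\ell-1}\log\log Y+O_\ell(1)$, so the product is $\ll(\log 2Y)^{s}$, and combining yields $Y(\log 2Y)^{s-1}$.

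The main obstacle is justifying the first inequality, which saves one logarithm. I will prove it directly: write $\sum_{m\leq Y}a(m)\log m=\sum_{p^{}\,m'\leq Y}a(p)a(m')\log p\leq A\sum_{m'\leq Y}a(m')\sum_{p\leq Y/m'}\log p\ll AY\sum_{m'}a(m')/m'$ using Chebyshev's bound. Then split the sum over $m$ at $m\leq\sqrt Y$, where the trivial bound $\sqrt Y\sum a(m)/m$ suffices, and at $m>\sqrt Y$, where $\log m\geq\frac12\log Y$.
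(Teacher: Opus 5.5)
Your proof is correct, and your main route differs from the paper's.

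The paper discards both the pairwise coprimality and the condition $(q_1\cdots q_s,Q)=1$, and majorizes the sum by $\sum_{m\le Y}a_\ell^{*s}(m)$. It writes $A_\ell(z)=Z_\ell(z)U_\ell(z)$ with $U_\ell$ absolutely convergent for $\Re z>1/2$. It then applies the Ikehara--Delange theorem to $A_\ell(z)^s$, after checking the pole of order $s$ at $1$ and holomorphy elsewhere on $\Re z=1$. Your first route is essentially this, using the exact diagonal series $\mathcal F_1(z,\ldots,z)=\prod_{p\equiv1\;({\rm mod}\,\ell)}(1+s(\ell-1)p^{-z})$ instead of the convolution majorant.

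Your preferred route goes differently. You collapse the tuples exactly to $\sum_m(s(\ell-1))^{\omega(m)}$ over squarefree $m\le Y$ built from primes $\equiv1\pmod\ell$, which gives uniformity in $Q$ by positivity. You then combine the elementary inequality $\sum_{m\le Y}a(m)\ll Y(\log 2Y)^{-1}\sum_{m\le Y}a(m)/m$ with Mertens' theorem in the progression $1\bmod\ell$. Your derivation of that inequality, via $\log m=\sum_{p\mid m}\log p$, Chebyshev's bound and the split at $\sqrt Y$, is fine.

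This buys an effective bound with no Tauberian theorem and no boundary continuation to verify. The only deep input is $L(1,\chi)\neq0$, hidden inside Mertens for progressions. You do need the exact density $1/(\ell-1)$ there: an upper bound of Brun--Titchmarsh strength would inflate the power of $\log Y$. The paper's route is shorter given Lemma~\ref{lem:zell-factorization} and yields a genuine asymptotic. That is more than this lemma requires, and it comes with no explicit range of validity.

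One correction to an aside: Rankin's trick with $\sigma=1+1/\log 2Y$ does not give the same bound. It produces $Y\prod_{p\equiv1\;({\rm mod}\,\ell)}(1+s(\ell-1)p^{-\sigma})\asymp Y(\log 2Y)^{s}$, losing exactly the logarithm that your $\log m$ weighting saves. Since your argument never uses it, simply drop that remark.
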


\begin{proof}
For the upper bound, we enlarge the summation range so that the coordinates can be counted independently. Discard the pairwise coprimality condition and the requirement that $q_1\cdots q_s$ be coprime to $Q$.  Define
\[
        a_\ell(q)=\mu^2(q)(\ell-1)^{\omega(q)}
        1_{\{p\mid q\Rightarrow p\equiv1\;({\rm mod}\,\ell)\}}.
\]
Then the enlarged weighted sum over $q_1\cdots q_s\leq Y$ is
\[
        \sum_{m\leq Y}a_\ell^{*s}(m),
\]
where $a_\ell^{*s}$ denotes the $s$-fold Dirichlet convolution.  The Dirichlet series of $a_\ell$ is
\[
        A_\ell(z)=
        \prod_{p\equiv1\;({\rm mod}\,\ell)}
        \left(1+(\ell-1)p^{-z}\right).
\]
Multiplying and dividing by $Z_\ell(z)$, one obtains
\[
        A_\ell(z)=Z_\ell(z)U_\ell(z),
\]
where
\[
        U_\ell(z)=
        \prod_{p\equiv1\;({\rm mod}\,\ell)}
        \left(1+(\ell-1)p^{-z}\right)
        (1-p^{-z})^{\ell-1}
\]
converges absolutely for $\Re z>1/2$, because again the terms of degree one cancel. Lemma~\ref{lem:zell-factorization} therefore shows that $A_\ell(z)$ is meromorphic in this half-plane, with only a simple pole at $1$ and a holomorphic nonzero factor there. Hence $A_\ell(z)^s$, the Dirichlet series of $a_\ell^{*s}$, has a pole of order $s$ at $1$ and is holomorphic at every other point of $\Re z=1$. Its coefficients are nonnegative. The full boundary hypothesis has therefore been checked, and the Ikehara--Delange theorem stated above, or the Selberg--Delange method as in \cite{Tenenbaum2015}, gives
\[
        \sum_{m\leq Y}a_\ell^{*s}(m)
        \ll_{\ell,s}Y(\log 2Y)^{s-1}.
\]
\hypersetup{linkcolor=black,citecolor=black,urlcolor=black}The original sum is bounded by this enlarged sum, and the estimate is independent of $Q$. For $1\leq Y\leq2$, the same bound follows by monotonicity from the bound at $Y=2$. This proves \eqref{eq:product-upper-bound}.
\end{proof}

The next consequence is the distribution of the logarithms under a product constraint. We give the proof because this is where the simplex measure appears.

\begin{lemma}\label{lem:minimal-counting}
As $Y\to\infty$,
\begin{equation}\label{eq:product-count}
        \sum_{\substack{q_1\cdots q_s\leq Y\\q_i\text{ as above}}}
        w(\mathbf q)
        \sim
        \frac{c_{\ell,s}(Q)}{(s-1)!}
        Y(\log Y)^{s-1}.
\end{equation}
Let
\[
        \Delta_{s-1}
        =\{(t_1,\ldots,t_s)\in\R_{\geq0}^s:t_1+\cdots+t_s=1\},
\]
with $d\mathbf t=dt_1\cdots dt_{s-1}$ and $t_s=1-t_1-\cdots-t_{s-1}$. When $s=1$, this means unit mass at the point $\Delta_0=\{1\}$. If $F$ is continuous on the compact simplex $\{\mathbf t\geq0:\sum_i t_i\leq1\}$, then
\begin{align}\label{eq:simplex-count}
&\sum_{\substack{q_1\cdots q_s\leq Y\\q_i\text{ as above}}}
 w(\mathbf q)
 F\left(\frac{\log q_1}{\log Y},\ldots,
 \frac{\log q_s}{\log Y}\right)\notag\\
&\qquad=
 c_{\ell,s}(Q)Y(\log Y)^{s-1}
 \int_{\Delta_{s-1}}F(\mathbf t)\,d\mathbf t
 +o_{F,Q}\!\left(Y(\log Y)^{s-1}\right).
\end{align}
\end{lemma}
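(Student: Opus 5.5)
The first assertion \eqref{eq:product-count} will come from the one-variable Tauberian theorem applied to the diagonal series. Put $z_1=\cdots=z_s=z$ in \eqref{eq:minimal-euler-product}. The resulting Dirichlet series $D_Q(z)=\mathcal F_Q(z,\ldots,z)$ has coefficients equal to the weighted count of admissible tuples with $q_1\cdots q_s=m$. These coefficients are nonnegative. By Lemma~\ref{lem:FQ-factorization}, near $z=1$ we have $D_Q(z)=C_Q(z,\ldots,z)(z-1)^{-s}$ with $C_Q(1,\ldots,1)=c_{\ell,s}(Q)>0$. The continuation statement at the end of that proof gives meromorphy on $\Re z>1/2$, with poles only where some $z_j=1$; on the diagonal this means only at $z=1$. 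So the boundary hypothesis holds. The Ikehara--Delange theorem as stated then yields the asymptotic with constant $c_{\ell,s}(Q)/\Gamma(s)=c_{\ell,s}(Q)/(s-1)!$.

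For \eqref{eq:simplex-count} I will first treat monomial test functions $F(\mathbf t)=t_1^{k_1}\cdots t_s^{k_s}$ and then pass to general continuous $F$ by Stone--Weierstrass. Monomials are dense in $C$ of the compact simplex. The total weight is $O(Y(\log Y)^{s-1})$ by \eqref{eq:product-count}, or uniformly by Lemma~\ref{lem:box-counting}. Hence a uniform approximation error $\varepsilon$ contributes $O(\varepsilon Y(\log Y)^{s-1})$, which is acceptable.

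For a monomial, the weighted sum of $\prod_j(\log q_j)^{k_j}$ has Dirichlet series
\[
(-1)^{|k|}\,\partial_{z_1}^{k_1}\cdots\partial_{z_s}^{k_s}\mathcal F_Q(\mathbf z)\big|_{z_j=z},
\]
where $|k|=k_1+\cdots+k_s$. Differentiating \eqref{eq:FQ-polar-factorization}, the leading singular term is
\[
c_{\ell,s}(Q)\prod_j k_j!\,(z-1)^{-s-|k|}.
\]
The remaining terms have poles of lower order, and their coefficients are holomorphic near $1$. The continuation clause of Lemma~\ref{lem:FQ-factorization} explicitly covers partial derivatives, so the boundary hypothesis again holds.

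Nonnegativity of coefficients is also fine. The sign $(-1)^{|k|}$ exactly compensates the derivatives, so the coefficients are weights times products of logarithms, all $\geq0$. The point to check carefully is that the coefficient of the leading pole is positive; it is, namely $c_{\ell,s}(Q)\prod_j k_j!$. Tauberian input then gives the asymptotic
\[
c_{\ell,s}(Q)\,\frac{\prod_j k_j!}{(s+|k|-1)!}\,Y(\log Y)^{s+|k|-1}
\]
for the sum of $\prod_j(\log q_j)^{k_j}$. The sum we want carries the factor $(\log Y)^{-|k|}$, which converts this into
\[
c_{\ell,s}(Q)\,Y(\log Y)^{s-1}\,\frac{\prod_j k_j!}{(s+|k|-1)!}.
\]
This matches the Dirichlet integral
\[
\int_{\Delta_{s-1}}t_1^{k_1}\cdots t_s^{k_s}\,d\mathbf t=\frac{\prod_j k_j!}{(s+|k|-1)!}.
\]
With $k=0$ this recovers \eqref{eq:product-count}, since $\mathrm{vol}(\Delta_{s-1})=1/(s-1)!$.

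There is a subtlety in replacing $\log Y$ by $\log m$. The sum is naturally over $m=q_1\cdots q_s\le Y$ weighted by $(\log q_j)/\log Y$, not $(\log q_j)/\log m$. However, the monomial sum with the factor $(\log Y)^{-|k|}$ is exactly what the Tauberian theorem controls, so no partial summation is needed.

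The main obstacle is justifying the Tauberian hypotheses for the differentiated diagonal series. Two things are needed: holomorphic continuation across $\Re z=1$ away from $z=1$, and the precise form $H(z)(z-1)^{-K}$ with $H$ holomorphic and $H(1)>0$. I will handle this by expanding the derivatives of $C_Q(\mathbf z)\prod_j(z_j-1)^{-1}$ via the Leibniz rule, grouping the terms by total pole order, and multiplying through by $(z-1)^{s+|k|}$. This shows that $H$ is holomorphic near $1$ with the value computed above.

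The case $s=1$ is immediate: $t_1=1$ identically, so $F$ is evaluated at the point and the statement reduces to \eqref{eq:product-count}.
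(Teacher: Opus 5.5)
Your proposal is correct and follows the paper's proof. You express the mixed log-moments as diagonal restrictions of partial derivatives of $\mathcal F_Q$ and read off the leading pole $c_{\ell,s}(Q)\prod_j k_j!\,(z-1)^{-s-|\mathbf k|}$ from Lemma~\ref{lem:FQ-factorization}. You then apply the Ikehara--Delange theorem, match the result with the Dirichlet integral over $\Delta_{s-1}$, and pass to continuous $F$ by Stone--Weierstrass; the paper phrases this last step as weak convergence of the normalized measures $\mu_Y$, which is equivalent.

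One correction: your closing claim that the case $s=1$ is immediate because ``$t_1=1$ identically'' is false. The sample points $\log q_1/\log Y$ range over all of $[0,1]$, not just the point $1$. Nothing is lost, though, because your general moment argument with $s=1$ gives $\sum_{q\le Y}w(q)(\log q/\log Y)^k\sim c_{\ell,1}(Q)Y$ for every $k$. That is exactly the concentration at $t=1$ required, so you can simply drop the separate $s=1$ paragraph.
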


\begin{proof}
We compute the mixed moments of the normalized logarithms and compare them with the moments of normalized Lebesgue measure on $\Delta_{s-1}$. The moment with all exponents zero gives the total count.

For a multi-index $\mathbf k=(k_1,\ldots,k_s)\in\Z_{\geq0}^s$, define $|\mathbf k|=k_1+\cdots+k_s$ and
set
\begin{equation}\label{eq:Mk-def}
        M_{\mathbf k}(Y;Q)
        =\sum_{q_1\cdots q_s\leq Y}
        w(\mathbf q)\prod_{i=1}^s(\log q_i)^{k_i},
\end{equation}
where the sum runs over ordered tuples $(q_1,\ldots,q_s)$
of pairwise coprime positive squarefree integers such that
$(q_1\cdots q_s,Q)=1$ and every prime divisor of
$q_1\cdots q_s$ is congruent to $1$ modulo $\ell$.
For $\Re(z)>1$, absolute convergence allows us to differentiate the multiple Dirichlet series term by term.  After differentiating in the $i$-th variable $k_i$ times and then restricting to the diagonal $z_1=\cdots=z_s=z$, we obtain
\begin{align}\label{eq:Dk-def}
        D_{\mathbf k,Q}(z)
        &:=\sum_{\mathbf q}
        \frac{w(\mathbf q)\prod_i(\log q_i)^{k_i}}
        {(q_1\cdots q_s)^z}\notag\\
        &=(-1)^{|\mathbf k|}
        \left.
        \frac{\partial^{|\mathbf k|}}
        {\partial z_1^{k_1}\cdots\partial z_s^{k_s}}
        \mathcal F_Q(\mathbf z)
        \right|_{z_1=\cdots=z_s=z}.
\end{align}

Insert the factorization from Lemma~\ref{lem:FQ-factorization}:
\[
        \mathcal F_Q(\mathbf z)
        =C_Q(\mathbf z)\prod_{i=1}^s(z_i-1)^{-1}.
\]
The highest possible pole order in \eqref{eq:Dk-def} occurs when, for every $i$, all $k_i$ derivatives in the variable $z_i$ fall on the factor $(z_i-1)^{-1}$.  Since
\[
        \frac{d^{k_i}}{dz_i^{k_i}}(z_i-1)^{-1}
        =(-1)^{k_i}k_i!(z_i-1)^{-k_i-1},
\]
the signs cancel the prefactor $(-1)^{|\mathbf k|}$ in \eqref{eq:Dk-def}.  The resulting leading term is
\[
        C_Q(1,\ldots,1)
        \left(\prod_{i=1}^sk_i!\right)
        (z-1)^{-s-|\mathbf k|}.
\]
If at least one derivative falls on $C_Q$, then at least one polar factor receives fewer derivatives, and the total pole order drops by at least one.  Because $C_Q$ and all of its derivatives are holomorphic near $(1,\ldots,1)$, we therefore have
\begin{equation}\label{eq:moment-pole}
        D_{\mathbf k,Q}(z)
        =c_{\ell,s}(Q)
        \left(\prod_{i=1}^sk_i!\right)
        (z-1)^{-s-|\mathbf k|}
        +O_{\mathbf k,Q}\left((z-1)^{-s-|\mathbf k|+1}\right).
\end{equation}

The coefficients of $D_{\mathbf k,Q}$ are nonnegative, and its series converges absolutely for $\Re z>1$. To apply the Ikehara--Delange theorem, we also check continuation along the rest of the boundary. By Lemma~\ref{lem:FQ-factorization}, $\mathcal F_Q$ is meromorphic in the product domain $\Re z_i>1/2$, with possible poles only on $z_i=1$. Partial differentiation preserves this property: the differentiated factors remain holomorphic away from these coordinate hyperplanes. Restricting the resulting expressions to the diagonal gives a meromorphic continuation of $D_{\mathbf k,Q}$ to $\Re z>1/2$ whose only possible pole is at $1$. Thus it is holomorphic at every other point of the full boundary line $\Re z=1$.

Near $1$, equation \eqref{eq:moment-pole} can be written as $D_{\mathbf k,Q}(z)=H_{\mathbf k,Q}(z)(z-1)^{-\rho}$ with $\rho=s+|\mathbf k|$, where $H_{\mathbf k,Q}$ is holomorphic and $H_{\mathbf k,Q}(1)=c_{\ell,s}(Q)\prod_i k_i!>0$. All the hypotheses of the Tauberian statement preceding Lemma~\ref{lem:box-counting} are now satisfied. Applying it with this pole order and leading coefficient gives
\begin{equation}\label{eq:moment-asymptotic}
        M_{\mathbf k}(Y;Q)
        \sim c_{\ell,s}(Q)
        \frac{\prod_i k_i!}{(s+|\mathbf k|-1)!}
        Y(\log Y)^{s+|\mathbf k|-1}.
\end{equation}
For $\mathbf k=\mathbf0$, this is precisely \eqref{eq:product-count}.

We now turn the moment asymptotics into weak convergence.  Put $T=\log Y$ and let $S_Q(Y)$ denote the total sum in \eqref{eq:product-count}.  Define a probability measure $\mu_Y$ on
\[
        \Sigma_s:=\{\mathbf x\in\R_{\geq0}^s:x_1+\cdots+x_s\leq1\}
\]
by
\[
        \mu_Y
        =\frac1{S_Q(Y)}
        \sum_{\substack{q_1\cdots q_s\leq Y\\q_i\text{ as above}}}
        w(\mathbf q)
        \delta_{(\log q_1/T,\ldots,\log q_s/T)}.
\]
The support lies in $\Sigma_s$ because the product constraint is equivalent to
\[
        \frac{\log q_1}{T}+\cdots+\frac{\log q_s}{T}\leq1.
\]
For the monomial $\mathbf x^{\mathbf k}=x_1^{k_1}\cdots x_s^{k_s}$, equations \eqref{eq:product-count} and \eqref{eq:moment-asymptotic} give
\begin{align*}
        \int_{\Sigma_s}\mathbf x^{\mathbf k}\,d\mu_Y(\mathbf x)
        &=\frac{M_{\mathbf k}(Y;Q)}{T^{|\mathbf k|}S_Q(Y)}\\
        &\longrightarrow
        \frac{(s-1)!\prod_i k_i!}
        {(s+|\mathbf k|-1)!}.
\end{align*}

Let $\mu_\Delta$ be normalized Lebesgue measure on the outer face
\[
        \Delta_{s-1}=\{\mathbf t\in\R_{\geq0}^s:t_1+\cdots+t_s=1\}.
\]
With the coordinates $t_1,\ldots,t_{s-1}$ and $t_s=1-\sum_{i<s}t_i$, ordinary Lebesgue measure on this simplex has total mass $1/(s-1)!$.  Thus
\[
        d\mu_\Delta=(s-1)!\,d\mathbf t.
\]
The Dirichlet beta integral yields
\[
        \int_{\Delta_{s-1}}\prod_{i=1}^st_i^{k_i}\,d\mathbf t
        =\frac{\prod_i k_i!}{(s+|\mathbf k|-1)!},
\]
and hence
\[
        \int\mathbf t^{\mathbf k}\,d\mu_\Delta(\mathbf t)
        =\frac{(s-1)!\prod_i k_i!}{(s+|\mathbf k|-1)!}.
\]
The limiting moments of $\mu_Y$ are therefore exactly the moments of $\mu_\Delta$.

Since $\Sigma_s$ is compact, polynomials in the coordinate functions form a subalgebra of $C(\Sigma_s)$ which contains the constants and separates points.  By the Stone--Weierstrass theorem they are uniformly dense in $C(\Sigma_s)$.  Convergence of all polynomial moments therefore implies
\[
        \mu_Y\Longrightarrow\mu_\Delta
\]
weakly.

If $F$ is continuous on $\Sigma_s$, weak convergence gives
\[
        \frac1{S_Q(Y)}
        \sum_{q_1\cdots q_s\leq Y}w(\mathbf q)
        F\left(\frac{\log q_1}{T},\ldots,\frac{\log q_s}{T}\right)
        \longrightarrow
        (s-1)!\int_{\Delta_{s-1}}F(\mathbf t)\,d\mathbf t.
\]
Multiplying by the asymptotic
\[
        S_Q(Y)\sim\frac{c_{\ell,s}(Q)}{(s-1)!}Y(\log Y)^{s-1}
\]
gives \eqref{eq:simplex-count} with an additive error. This formulation also applies when the limiting integral is zero, or when $F$ takes both positive and negative values.
\end{proof}

Return to the fixed group $G$. Let $B_G$ be the submatrix of $A_G$ whose columns are indexed by the minimal inertia subgroups $\cM(G)$. Define
\begin{equation}\label{eq:shape-simplex}
        \mathscr P_G
        :=\operatorname{conv}\{v_C:C\in\cM(G)\}\subset E_G.
\end{equation}
The following lemma establishes the affine independence of its vertices.

\begin{lemma}\label{lem:simplex-independent}
The points $\{v_C:C\in\cM(G)\}$ are affinely independent. Hence $\mathscr P_G$ has dimension $s_G-1$.
\end{lemma}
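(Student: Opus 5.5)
The plan is to reduce affine independence in the quotient $E_G=\R^{\Omega(G)}/\R\mathbf1$ to linear independence upstairs. The points $v_C=\pi(A_C)$, $C\in\cM(G)$, are affinely independent exactly when the differences $v_C-v_{C_0}$ (for a fixed $C_0\in\cM(G)$) are linearly independent in $E_G$. Equivalently, the only relation $\sum_{C\in\cM(G)}t_Cv_C=0$ with $\sum_Ct_C=0$ is the trivial one. Lifting to $\R^{\Omega(G)}$, such a relation says
\[
        \sum_{C\in\cM(G)}t_CA_C=c\mathbf1
\]
for some real $c$. So it suffices to show that the columns $A_C$ ($C\in\cM(G)$), together with $\mathbf1$, are linearly independent, or at least that any relation forces $t=0$.

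The key step is to express $\mathbf1$ in terms of the columns of $A_G$ and then invoke Theorem~\ref{thm:incidence}. I would use the block description from its proof: $A_G=\mathbf1\mathbf1^{\mathrm t}-B$, with $B^+$ nonsingular. Alternatively, I would look directly for a combination of columns equal to $\mathbf1$. Take $C=G[\ell]$'s order-$\ell$ lines. A nontrivial character $\chi$ of $G$ is trivial on every $C\in\cM(G)$ exactly when $\chi$ is trivial on $G[\ell]$. That is not all characters, so $\mathbf1$ is not simply a column.

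The cleaner route is to argue inside $\R^{\Omega^+(G)}$ using $B^+$. The column of $A_G$ at $C$ equals $\mathbf1-B_C$, where $B_C$ is the $\Omega(G)$-part of the column $B^+_C$. Thus a relation $\sum t_C(\mathbf1-B_C)=c\mathbf1$ with $\sum t_C=0$ becomes $\sum t_CB_C=-c\mathbf1$. The trivial-orbit row of every column of $B^+$ is $1$, so this upgrades to
\[
        \sum_{C\in\cM(G)}t_CB^+_C=-c\,B^+_{\{1\}}+(\text{correction in the trivial row}).
\]
Here $B^+_{\{1\}}$ is the column of the trivial subgroup, which is the all-ones vector on $\Omega^+(G)$. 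Comparing trivial-row entries gives $\sum t_C=-c$, so $c=0$. The relation then becomes $\sum t_CB^+_C+0\cdot B^+_{\{1\}}=0$, and nonsingularity of $B^+$ from Theorem~\ref{thm:incidence} forces every $t_C=0$.

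The dimension statement then follows, since $s_G$ affinely independent points span a simplex of dimension $s_G-1$. The main obstacle is the bookkeeping in the penultimate step: correctly tracking the trivial row and the trivial-subgroup column so that the scalar $c$ is killed. I expect this to be resolved by the displayed block form of $B^+$. The needed fact is that the trivial subgroup is distinct from every $C\in\cM(G)$, so the column set $\cM(G)\cup\{1\}$ consists of distinct columns of the nonsingular matrix $B^+$.
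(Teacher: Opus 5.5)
Your reduction to $\sum_{C\in\cM(G)}t_CB_C=-c\mathbf1$ on $\Omega(G)$, with $\sum_Ct_C=0$, is correct. So is the last step once $c=0$ is known. The gap is the step that is supposed to kill $c$.

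After lifting to $\Omega^+(G)$, the vector $\sum_Ct_CB^+_C$ has trivial-row entry $\sum_Ct_C=0$, whereas $-cB^+_{\{1\}}$ has trivial-row entry $-c$. So your ``correction in the trivial row'' is exactly $c$. Comparing trivial-row entries gives the tautology $0=-c+c$, not $\sum_Ct_C=-c$. The honest identity is
\[
\sum_Ct_CB^+_C+cB^+_{\{1\}}=c\,e_{\mathrm{triv}},
\]
where $e_{\mathrm{triv}}$ is the basis vector at the trivial orbit. This is not a linear relation among the columns of $B^+$, so nonsingularity of $B^+$ says nothing about $c$.

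This is not mere bookkeeping. Ruling out $c\neq0$ is the entire content of the lemma beyond Theorem~\ref{thm:incidence}. The trivial row cannot detect it, since it only records $\sum_Ct_C$, which you already assumed vanishes. Relatedly, your first proposed sufficient condition, that the $A_C$ together with $\mathbf1$ are linearly independent, is false. For $G=(C_\ell)^r$ every row of $A_G$ has $\ell^{r-1}$ ones, so $\mathbf1=\ell^{1-r}\sum_{C\in\cM(G)}A_C$. The constraint $\sum_Ct_C=0$ therefore has to enter in an essential way.

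The missing ingredient is a functional that sees dimensions rather than the trivial row. Pair $\sum_Ct_CA_C=c\mathbf1$ with the weight vector $(|\omega|)_{\omega\in\Omega(G)}$. For every $C\in\cC(G)$,
\[
\sum_\omega|\omega|a_{\omega,C}=\#\{\chi\in\widehat G:\chi|_C\neq1\}=n-[G:C]=a_C.
\]
This is the identity behind the compatibility of \eqref{eq:conductor-discriminant-orbits} with \eqref{eq:disc-inertia}. Moreover $a_C=a_0$ for every $C\in\cM(G)$, while $\sum_\omega|\omega|=n-1$. Hence $a_0\sum_Ct_C=c(n-1)$. Since $\sum_Ct_C=0$ and $n\geq2$, this forces $c=0$, and then $t=0$ because the columns of $A_G$ are independent. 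In your $B^+$ language, this amounts to evaluating the virtual character $\sum_Ct_C\rho_C=-c(\rho_{\{1\}}-1)$ at the identity, which gives $0=-c(n-1)$.

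With this repair your route becomes a uniform argument. The paper instead splits into two cases. When $G[\ell]\neq G$, a nontrivial character trivial on $G[\ell]$ gives a zero row of $B_G$ and forces $c=0$. When $G=(C_\ell)^r$, $\mathbf1$ is an eigenvector of $A_G$, and the sum-zero condition kills $c$.
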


\begin{proof}
Affine independence means that the only relation
\[
        \sum_{C\in\cM(G)}r_Cv_C=0
\]
whose coefficients satisfy
\[
        \sum_{C\in\cM(G)}r_C=0
\]
is the trivial relation. Suppose such a relation is given, and write $\mathbf r=(r_C)_{C\in\cM(G)}$.  By definition $v_C=\pi(A_C)$, and the minimal columns of $A_G$ form the matrix $B_G$.  Hence
\[
        \pi(B_G\mathbf r)=0.
\]
The kernel of $\pi:\R^{\Omega(G)}\to E_G$ is exactly the line $\R\mathbf1$.  There is therefore a scalar $c\in\R$ such that
\begin{equation}\label{eq:BGr-c1}
        B_G\mathbf r=c\mathbf1.
\end{equation}
We consider separately the cases $G[\ell]\neq G$ and $G[\ell]=G$.

Assume first that $G$ is not elementary abelian of exponent $\ell$.  Then $G[\ell]\neq G$.  Indeed, if every element of $G$ were killed by $\ell$, the finite abelian group $G$ would be a vector space over $\F_\ell$ and hence isomorphic to $(C_\ell)^r$.  Thus the quotient $G/G[\ell]$ is a nontrivial finite abelian group.  It has a nontrivial complex character; inflating such a character to $G$ gives a character $\chi$ which is trivial on $G[\ell]$ and hence on every subgroup $C\in\cM(G)$.  The rational Galois orbit of $\chi$ therefore determines a row of $B_G$ all of whose entries are zero.  Reading this row in \eqref{eq:BGr-c1} gives $c=0$.

We are left with $B_G\mathbf r=0$.  The columns of $B_G$ are a subset of the columns of the invertible square matrix $A_G$.  Every subset of the columns of an invertible matrix is linearly independent, so $\mathbf r=0$.

Now suppose that $G=(C_\ell)^r$.  In this case every nontrivial cyclic subgroup has order $\ell$, so $B_G=A_G$.  A nontrivial character has kernel a hyperplane $H\subset\F_\ell^r$.  There are
\[
        \frac{\ell^r-1}{\ell-1}
\]
lines in $\F_\ell^r$ and
\[
        \frac{\ell^{r-1}-1}{\ell-1}
\]
lines in $H$.  Therefore exactly
\[
        \frac{\ell^r-\ell^{r-1}}{\ell-1}=\ell^{r-1}
\]
lines are not contained in $H$.  By the definition of $A_G$, every row therefore contains $\ell^{r-1}$ entries equal to $1$.  Hence
\[
        A_G\mathbf1=\ell^{r-1}\mathbf1.
\]
Since $A_G$ is invertible,
\[
        A_G^{-1}\mathbf1=\ell^{1-r}\mathbf1.
\]
Equation \eqref{eq:BGr-c1} now becomes $A_G\mathbf r=c\mathbf1$, and applying $A_G^{-1}$ gives
\[      \mathbf r=c\ell^{1-r}\mathbf1.
\]
But the coefficients of an affine relation also satisfy $\sum_Cr_C=0$.  A constant vector has zero coordinate sum only when it is zero.  Therefore $c=0$ and $\mathbf r=0$.

In both cases the only affine relation is trivial.  Thus the points $v_C$ are affinely independent, and the convex hull of $s_G$ such points has dimension $s_G-1$.
\end{proof}

To state the distribution theorem, let $\mathscr F_G^{\rm mark}(X)$ denote the set of marked tame $G$-extensions $(K,\iota)$ with $|\disc(K)|\leq X$, and let $\mathscr F_G(X)$ denote the corresponding set of unmarked fields. The group $\Aut(G)$ acts freely on $\mathscr F_G^{\rm mark}(X)$ by changing the marking, and the quotient is $\mathscr F_G(X)$.

\begin{theorem}\label{thm:general-shape-distribution}
Let $\ell$ be the smallest prime divisor of $|G|$, and let $\alpha_G$ be as in \eqref{eq:a0-alpha}. There is an explicit constant $C_G^{\rm mark}>0$ such that
\begin{equation}\label{eq:total-G-count-marked}
        |\mathscr F_G^{\rm mark}(X)|
        \sim C_G^{\rm mark}X^{\alpha_G}(\log X)^{s_G-1}.
\end{equation}
Moreover, for every bounded continuous function $F$ on $E_G$,
\begin{align}\label{eq:shape-limit-marked}
&\frac1{|\mathscr F_G^{\rm mark}(X)|}
\sum_{(K,\iota)\in\mathscr F_G^{\rm mark}(X)}
F\left(\frac{a_0}{\log X}\lambda(K,\iota)\right)\notag\\
&\qquad\longrightarrow
(s_G-1)!\int_{\Delta_{s_G-1}}
F\left(\sum_{C\in\cM(G)}t_Cv_C\right)d\mathbf t.
\end{align}
Thus the marked rescaled logarithmic conductor parameters become uniformly distributed on $\mathscr P_G$.

For unmarked fields one has
\begin{equation}\label{eq:total-G-count}
        |\mathscr F_G(X)|
        \sim C_GX^{\alpha_G}(\log X)^{s_G-1},
       \quad \text{where}\quad C_G=\frac{C_G^{\rm mark}}{|\Aut(G)|}.
\end{equation}
For unmarked fields, the rescaled canonical classes $a_0\overline\lambda(K)/\log X$ are distributed according to the pushforward of normalized Lebesgue measure on $\mathscr P_G$ under the quotient map $E_G\to E_G/\Aut(G)$. Equivalently, \eqref{eq:shape-limit-marked} also holds for unmarked fields whenever $F$ is $\Aut(G)$-invariant.
\end{theorem}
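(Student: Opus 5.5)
By Lemma~\ref{lem:tame-parametrization}, counting marked fields is the same as counting admissible tuples $\mathbf q=(q_C)_{C\in\cC(G)}$ with weight $w(\mathbf q)$ and the generation condition \eqref{eq:generation-condition}. By Lemma~\ref{lem:tame-discriminant}, the discriminant bound reads $\prod_C q_C^{a_C}\le X$. I would split each tuple into its minimal part $\mathbf q^{\min}=(q_C)_{C\in\cM(G)}$ and its nonminimal part $\mathbf q'=(q_C)_{C\notin\cM(G)}$. Put $Q'=\prod_{C\notin\cM}q_C$ and $D'=\prod_{C\notin\cM}q_C^{a_C}$. For fixed $\mathbf q'$ the condition becomes $\prod_{C\in\cM}q_C\le (X/D')^{\alpha_G}=:Y$. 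Here the minimal coordinates all satisfy $p\equiv1\pmod\ell$, carry weight $(\ell-1)^{\omega}$, and must be coprime to $Q'$. This is exactly the setting of Lemma~\ref{lem:minimal-counting} with $s=s_G$ and $Q=Q'$. The generation condition depends only on which $q_C$ exceed $1$. I would first treat tuples where every $q_C$, $C\in\cM$, exceeds $1$, and then show that the tuples with some minimal $q_C=1$ contribute $O(Y(\log Y)^{s_G-2})$, by applying Lemma~\ref{lem:box-counting} with $s_G-1$ coordinates. So up to negligible error the generation condition is a condition on $\mathbf q'$ alone, or is automatic when $G[\ell]=G$. One should also observe that it fails only on a finite union of such lower-dimensional families.

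The main term is then
\[
  \sum_{\mathbf q'} w(\mathbf q')\,\frac{c_{\ell,s_G}(Q')}{(s_G-1)!}\,\frac{X^{\alpha_G}}{D'^{\alpha_G}}\bigl(\log (X/D')^{\alpha_G}\bigr)^{s_G-1}.
\]
To justify summing the asymptotic over $\mathbf q'$, I would use dominated convergence. The uniform bound of Lemma~\ref{lem:box-counting} gives a majorant $w(\mathbf q')D'^{-\alpha_G}(\log 2X)^{s_G-1}$. This majorant is summable because $a_C\alpha_G>1$ for every nonminimal $C$, so $\sum_{\mathbf q'}w(\mathbf q')\prod_C q_C^{-a_C\alpha_G}$ converges: its Euler product has local terms $1+O(p^{-a_C\alpha_G})$. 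Pointwise, each term tends to $c_{\ell,s_G}(Q')D'^{-\alpha_G}/(s_G-1)!$ after dividing by $X^{\alpha_G}(\log X)^{s_G-1}$, with the factor $\alpha_G^{s_G-1}$ absorbed into the constant. This yields \eqref{eq:total-G-count-marked} with
\[
  C_G^{\rm mark}=\frac{\alpha_G^{s_G-1}}{(s_G-1)!}\sum_{\mathbf q'\ \text{generating}}w(\mathbf q')c_{\ell,s_G}(Q')D'^{-\alpha_G}.
\]
Positivity of $C_G^{\rm mark}$ follows from \eqref{eq:cQ-uniform-bound} and the existence of at least one generating $\mathbf q'$; by Dirichlet's theorem one can pick primes with inertia in a generating set of cyclic subgroups.

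For the distribution, \eqref{eq:log-shape} gives
\[
  \frac{a_0}{\log X}\lambda=\sum_{C\in\cM}\frac{\log q_C}{\alpha_G^{-1}\log X}\,v_C+\sum_{C\notin\cM}\frac{a_0\log q_C}{\log X}\,v_C.
\]
For each fixed $\mathbf q'$ the second sum tends to $0$, and $\log Y/(\alpha_G\log X)\to1$. Since $F$ is bounded and continuous, I would apply \eqref{eq:simplex-count} with test function $\mathbf t\mapsto F(\sum t_Cv_C)$. A small uniform-continuity adjustment handles the factor $\log Y/\log X$ and the vanishing perturbation. To make the error $o_{F,Q'}$ harmless, I would truncate at $D'\le B$: the tail $D'>B$ contributes at most $\epsilon$ times the main term, uniformly, by the majorant above. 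Summing the finitely many remaining $\mathbf q'$ gives \eqref{eq:shape-limit-marked}.

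The unmarked statements follow from Lemma~\ref{lem:tame-parametrization}. The family $\mathscr F_G^{\rm mark}(X)$ is $\Aut(G)$-stable, and the action is free, so one divides by $|\Aut(G)|$. For $\Aut(G)$-invariant $F$, the sum over markings of a field equals $|\Aut(G)|$ times its value on that field. I expect the main obstacle to be the interchange of limit and sum over $\mathbf q'$, because the Tauberian error in Lemma~\ref{lem:minimal-counting} is not uniform in $Q$. The truncation plus the uniform upper bound of Lemma~\ref{lem:box-counting} is the device I would rely on, with the convergence of $\sum w(\mathbf q')D'^{-\alpha_G}$ as the key input.
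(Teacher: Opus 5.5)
Your proposal is correct and follows essentially the same route as the paper. You fix the higher inertia coordinates, apply Lemma~\ref{lem:minimal-counting} with $Q=Q_{>}$, discard tuples with a missing minimal coordinate via Lemma~\ref{lem:box-counting}, and pass to the sum over higher tuples using the uniform majorant $w_{>}D_{>}^{-\alpha_G}$; your dominated-convergence formulation is equivalent to the paper's truncation at $D_{>}\leq M$ followed by $X\to\infty$ and then $M\to\infty$. The only detail you leave implicit is the case $s_G=1$: there Lemma~\ref{lem:box-counting} with $s_G-1=0$ coordinates does not apply, but the single tuple with the minimal coordinate equal to $1$ contributes only $O(1)$, as the paper notes.
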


\begin{proof}
Write
\[
        \cC(G)=\cM(G)\sqcup\cC_{>}(G),
\]
where $\cC_{>}(G)$ consists of the cyclic subgroups of order strictly larger than $\ell$.  Fix for the moment an admissible tuple in the higher coordinates,
\[
        \mathbf q_{>}=(q_C)_{C\in\cC_{>}(G)}.
\]
We use the notation
\begin{equation}\label{eq:higher-notation}
        Q_{>}=
        \prod_{C\in\cC_{>}(G)}q_C,
        \qquad
        D_{>}=
        \prod_{C\in\cC_{>}(G)}q_C^{a_C}\quad \text{and}\quad
        w_{>}=
        \prod_{C\in\cC_{>}(G)}
        \varphi(d_C)^{\omega(q_C)}.
\end{equation}
Here $Q_{>}$ is the product of the primes in the higher coordinates, $D_{>}$ is their contribution to the discriminant, and $w_{>}$ counts the corresponding choices of local surjections. Pairwise coprimality requires the minimal coordinates to be coprime to $Q_{>}$.

For every $C\in\cM(G)$ one has $a_C=a_0$.  Thus the discriminant formula of Lemma~\ref{lem:tame-discriminant} becomes
\[
        D_{>}
        \left(\prod_{C\in\cM(G)}q_C\right)^{a_0}
        \leq X.
\]
Since $\alpha_G=1/a_0$, this is equivalent to
\begin{equation}\label{eq:Y-higher}
        \prod_{C\in\cM(G)}q_C
        \leq Y_{\mathbf q_{>}}\quad \text{and}\quad
        Y_{\mathbf q_{>}}
        :=\left(\frac{X}{D_{>}}\right)^{\alpha_G}.
\end{equation}
If $D_{>}>X$ there is no contribution.  Otherwise $Y_{\mathbf q_{>}}\geq1$.

The minimal coordinates have a uniform arithmetic description.  Every prime occurring in them is congruent to $1$ modulo $\ell$, every local surjection has multiplicity
\[
        \varphi(\ell)=\ell-1,
\]
and the primes must avoid $Q_{>}$.

The collection of all order-$\ell$ subgroups of $G$ generates $G[\ell]$.  Consequently a necessary condition for the fixed higher tuple to occur in the leading term is
\begin{equation}\label{eq:higher-generation}
        \left\langle
        G[\ell],
        C:q_C>1,\ C\in\cC_{>}(G)
        \right\rangle=G.
\end{equation}
If this condition fails, adjoining all the minimal inertia subgroups to the active higher subgroups still gives a proper subgroup of $G$. Thus no choice of minimal coordinates makes the map in Lemma~\ref{lem:tame-parametrization} surjective.

Assume now that \eqref{eq:higher-generation} holds.  If every minimal coordinate is nontrivial, then every subgroup in $\cM(G)$ occurs as an inertia image.  These subgroups generate $G[\ell]$, and together with the active higher subgroups they generate $G$ by \eqref{eq:higher-generation}.  Thus every such tuple gives a surjective global map.

We must check that allowing some minimal coordinate to equal $1$ does not change the leading term.  Fix $C_0\in\cM(G)$ and impose $q_{C_0}=1$.  Dropping the generation condition gives an upper bound, and the remaining $s_G-1$ minimal coordinates satisfy the same product constraint.  Lemma~\ref{lem:box-counting}, applied with $s_G-1$ coordinates, gives
\[
        O_G\left(
        Y_{\mathbf q_{>}}
        (\log 2Y_{\mathbf q_{>}})^{s_G-2}
        \right)
\]
when $s_G\geq2$.  There are only $s_G$ choices of a missing coordinate, so the same estimate holds for the union of all tuples with at least one missing minimal coordinate.  If $s_G=1$, the only tuple with the unique minimal coordinate absent is $q_C=1$, and its contribution is $O(1)$, which is again lower order than the main term $\asymp Y_{\mathbf q_{>}}$. Therefore, for a fixed higher tuple, imposing the generation condition changes the minimal-coordinate count by $o\bigl(Y_{\mathbf q_{>}}(\log Y_{\mathbf q_{>}})^{s_G-1}\bigr)$.

Lemma~\ref{lem:minimal-counting} now gives, for a fixed higher tuple satisfying \eqref{eq:higher-generation},
\begin{align}\label{eq:fixed-higher-main-detailed}
N_{\mathbf q_{>}}^{\rm mark}(X)
&=
w_{>}
\frac{c_{\ell,s_G}(Q_{>})}{(s_G-1)!}
Y_{\mathbf q_{>}}
(\log Y_{\mathbf q_{>}})^{s_G-1}
\bigl(1+o_{\mathbf q_{>}}(1)\bigr).
\end{align}
For a fixed higher tuple,
\[
        \log Y_{\mathbf q_{>}}
        =\alpha_G\log X-\alpha_G\log D_{>}
        =\alpha_G\log X+O_{\mathbf q_{>}}(1),
\]
and
\[
        Y_{\mathbf q_{>}}
        =X^{\alpha_G}D_{>}^{-\alpha_G}.
\]
Hence \eqref{eq:fixed-higher-main-detailed} is equivalent to
\begin{align}\label{eq:fixed-higher-main}
N_{\mathbf q_{>}}^{\rm mark}(X)
&=
\frac{w_{>}c_{\ell,s_G}(Q_{>})}
{(s_G-1)!D_{>}^{\alpha_G}}
X^{\alpha_G}(\alpha_G\log X)^{s_G-1}
\bigl(1+o_{\mathbf q_{>}}(1)\bigr).
\end{align}

The fixed-tuple asymptotic cannot be summed over infinitely many tuples without a uniform tail estimate, so we truncate first.  Fix $M\geq1$ and sum only over admissible higher tuples satisfying \eqref{eq:higher-generation} and
\[
        D_{>}\leq M.
\]
There are only finitely many such tuples: every prime dividing $Q_{>}$ divides the integer $D_{>}$, so only primes at most $M$ may occur.  The error terms in \eqref{eq:fixed-higher-main} can therefore be summed term by term over the truncated collection.

We next prove that the series defining the leading constant converges absolutely. If $C\in\cC_{>}(G)$, then $a_C>a_0$, and therefore
\begin{equation}\label{eq:strict-higher-exponent}
        \alpha_Ga_C=\frac{a_C}{a_0}>1.
\end{equation}
Drop all coprimality and generation conditions.  The resulting majorant factors over the finitely many higher cyclic subgroups:
\begin{align*}
        \sum_{\mathbf q_{>}}
        \frac{w_{>}}{D_{>}^{\alpha_G}}
        &\leq
        \prod_{C\in\cC_{>}(G)}
        \prod_{p\equiv1\;({\rm mod}\,d_C)}
        \left(1+
        \frac{\varphi(d_C)}{p^{\alpha_Ga_C}}
        \right).
\end{align*}
Each Euler product converges by \eqref{eq:strict-higher-exponent}.  Hence
\begin{equation}\label{eq:higher-series-converges}
        \sum_{\mathbf q_{>}}
        \frac{w_{>}}{D_{>}^{\alpha_G}}<\infty.
\end{equation}
Lemma~\ref{lem:FQ-factorization} also gives the uniform bound
\[
        c_{\ell,s_G}(Q_{>})\leq c_{\ell,s_G}(1).
\]
Thus the same series remains convergent after inserting the factor $c_{\ell,s_G}(Q_{>})$.

We now control the arithmetic tail uniformly in $X$.  For a fixed higher tuple, Lemma~\ref{lem:box-counting} gives
\begin{align*}
N_{\mathbf q_{>}}^{\rm mark}(X)
&\ll_G
w_{>}
Y_{\mathbf q_{>}}
(\log 2Y_{\mathbf q_{>}})^{s_G-1}\\
&\ll_G
X^{\alpha_G}(\log 2X)^{s_G-1}
\frac{w_{>}}{D_{>}^{\alpha_G}}.
\end{align*}
Consequently the contribution of all higher tuples with $D_{>}>M$ is at most
\begin{equation}\label{eq:higher-tail-count}
        \ll_G
        X^{\alpha_G}(\log 2X)^{s_G-1}
        \sum_{D_{>}>M}
        \frac{w_{>}}{D_{>}^{\alpha_G}}.
\end{equation}
After division by $X^{\alpha_G}(\log X)^{s_G-1}$, the right-hand side tends to zero as $M\to\infty$, uniformly for large $X$, by \eqref{eq:higher-series-converges}.  We may therefore first let $X\to\infty$ for the finite truncated collection and then let $M\to\infty$.

This proves \eqref{eq:total-G-count-marked} with
\begin{equation}\label{eq:CG}
        C_G^{\rm mark}=
        \frac{\alpha_G^{s_G-1}}
        {(s_G-1)!}
        \sum_{\mathbf q_{>}}^{*}
        \frac{w_{>}c_{\ell,s_G}(Q_{>})}
        {D_{>}^{\alpha_G}},
\end{equation}
where the star means that the higher tuple is admissible and satisfies \eqref{eq:higher-generation}.  The series is absolutely convergent by the preceding argument.

The constant is positive.  If $G=G[\ell]$, the empty higher tuple satisfies the generation condition and already gives a positive term.  Otherwise choose finitely many elements of $G$ whose nontrivial images generate $G/G[\ell]$ and let $C$ be the cyclic subgroup generated by each chosen element.  Every such $C$ has order strictly larger than $\ell$.  By Dirichlet's theorem on primes in arithmetic progressions, for each of these cyclic subgroups we may choose a distinct prime
\[
        p\equiv1\pmod{|C|}.
\]
Putting that prime into the corresponding coordinate produces an admissible higher tuple satisfying \eqref{eq:higher-generation}, and its summand in \eqref{eq:CG} is positive.

For $C\in\cM(G)$ write
\[
        u_C=\log q_C.
\]
For a fixed higher tuple, the decomposition \eqref{eq:log-shape} becomes
\begin{equation}\label{eq:lambda-split}
        \lambda(K,\iota)
        =\sum_{C\in\cM(G)}u_Cv_C
        +\sum_{C\in\cC_{>}(G)}(\log q_C)v_C.
\end{equation}
Set
\[
        T_{>}:=\log Y_{\mathbf q_{>}}
        =\alpha_G\log X-\alpha_G\log D_{>}.
\]
Under the product constraint, $0\leq u_C\leq T_{>}$.  Since $a_0=1/\alpha_G$,
\[
        \frac{a_0}{\log X}u_C
        =\frac{u_C}{\alpha_G\log X}.
\]
For fixed $\mathbf q_{>}$,
\begin{align*}
\left|
\frac{u_C}{\alpha_G\log X}-\frac{u_C}{T_{>}}
\right|
&\leq
T_{>}
\left|
\frac1{\alpha_G\log X}-\frac1{T_{>}}
\right|\\
&=
\frac{\alpha_G\log D_{>}}{\alpha_G\log X}
=O_{\mathbf q_{>}}\left(\frac1{\log X}\right).
\end{align*}
The higher-coordinate term in \eqref{eq:lambda-split}, after multiplication by $a_0/\log X$, is also $O_{\mathbf q_{>}}(1/\log X)$.  Hence, uniformly over all minimal tuples satisfying the product constraint,
\begin{equation}\label{eq:shape-uniform-approx}
        \frac{a_0}{\log X}\lambda(K,\iota)
        =\sum_{C\in\cM(G)}
        \frac{u_C}{T_{>}}v_C+o_{\mathbf q_{>}}(1).
\end{equation}

Let $F$ be bounded and continuous on $E_G$.  The vectors $\left(\frac{u_C}{T_{>}}\right)_{C\in\cM(G)}$
lie in the compact simplex $\sum_C t_C\leq1$.  The main terms on the right of \eqref{eq:shape-uniform-approx} therefore lie in a fixed compact subset of $E_G$.  Uniform continuity of $F$ on a slightly larger compact neighbourhood, together with \eqref{eq:shape-uniform-approx}, shows that replacing the true argument of $F$ by
\[
        \sum_C\frac{u_C}{T_{>}}v_C
\]
changes the weighted sum by
\[
        o_{\mathbf q_{>}}\left(
        Y_{\mathbf q_{>}}T_{>}^{s_G-1}
        \right).
\]

\hypersetup{linkcolor=black,citecolor=black,urlcolor=black}For a fixed higher tuple, the tuples excluded by the generation condition contribute at most $\|F\|_\infty$ times the error estimated in Step~2, so their weighted contribution is still negligible. We may therefore apply the second assertion of Lemma~\ref{lem:minimal-counting} to the continuous function
\[
        (t_C)_C\longmapsto
        F\left(\sum_Ct_Cv_C\right).
\]
The contribution of the fixed higher tuple equals its arithmetic main factor in \eqref{eq:fixed-higher-main}, multiplied by the following expression, with an additive error of size $o_{F,\mathbf q_{>}}(X^{\alpha_G}(\log X)^{s_G-1})$:
\begin{equation}\label{eq:shape-simplex-integral}
        (s_G-1)!
        \int_{\Delta_{s_G-1}}
        F\left(\sum_{C\in\cM(G)}t_Cv_C\right)d\mathbf t.
\end{equation}
This expression gives the contribution relative to the unweighted main term. More explicitly, before normalization the factor supplied by Lemma~\ref{lem:minimal-counting} is
\[
        \int_{\Delta_{s_G-1}}
        F\left(\sum_Ct_Cv_C\right)d\mathbf t
\]
in place of $1/(s_G-1)!$.

As in the counting argument, first sum over $D_{>}\leq M$.  For the tail, the absolute value of the shape-weighted contribution is at most $\|F\|_\infty$ times the unweighted contribution.  The uniform estimate \eqref{eq:higher-tail-count} therefore applies without change.  We may let $X\to\infty$ and then $M\to\infty$.  Dividing by the total marked count causes the absolutely convergent arithmetic series in \eqref{eq:CG} to cancel, and yields exactly \eqref{eq:shape-limit-marked}.

By Lemma~\ref{lem:simplex-independent}, the affine map
\[
        \Delta_{s_G-1}\longrightarrow\mathscr P_G 
\]
defined by $        (t_C)_C\longmapsto\sum_Ct_Cv_C$ is a bijection.  An affine bijection sends Lebesgue measure to a constant multiple of Lebesgue measure on the affine span.  Since $(s_G-1)!d\mathbf t$ has total mass one, the limiting measure is precisely normalized uniform Lebesgue measure on $\mathscr P_G$.

The group $\Aut(G)$ acts freely on the set of markings of each field, and every unmarked field with Galois group isomorphic to $G$ has exactly $|\Aut(G)|$ markings.  Therefore
\[
        |\mathscr F_G^{\rm mark}(X)|
        =|\Aut(G)|\,|\mathscr F_G(X)|,
\]
which gives \eqref{eq:total-G-count} and
\[
        C_G=\frac{C_G^{\rm mark}}{|\Aut(G)|}.
\]
If $F$ is $\Aut(G)$-invariant, its value is unchanged when the marking is replaced by $\alpha\circ\iota$.  The marked numerator is then exactly $|\Aut(G)|$ times the corresponding unmarked numerator.  Dividing numerator and denominator by this common factor gives the same limit for unmarked fields.  Thus the rescaled classes $a_0\overline\lambda(K)/\log X$ in $E_G/\Aut(G)$ have the pushforward of the simplex measure as their limiting distribution.
\end{proof}

The simplex in the distribution theorem arises from the product
constraint on the inertia coordinates with the smallest discriminant
exponent. Each subgroup $C\in\cM(G)$ contributes the factor
$q_C^{a_0}$ to the discriminant. After fixing the remaining inertia
coordinates, the discriminant bound therefore becomes
$\prod_{C\in\cM(G)}q_C\leq Y$, where
$Y=(X/D_{>})^{1/a_0}$ and $D_{>}$ is the discriminant contribution
from those remaining coordinates. The normalized logarithms
$t_C=\log q_C/\log Y$ are nonnegative and satisfy
$\sum_Ct_C\leq1$.

Under the weighted counting measure, however,
the contribution from tuples with
$\prod_Cq_C\leq Y^{1-\varepsilon}$ is negligible for every fixed
$\varepsilon>0$, so the limiting measure is supported on the face
$\sum_Ct_C=1$. Indeed, let $S_Q(U)$ denote the total weight of the
allowed tuples satisfying $\prod_Cq_C\leq U$. For fixed $Q$,
Lemma~\ref{lem:minimal-counting} gives
\[
        S_Q(U)\sim
        \frac{c_{\ell,s}(Q)}{(s-1)!}
        U(\log U)^{s-1}.
\]
Thus, for every fixed $0<\varepsilon<1$, the proportion of
the total weight coming from tuples with
$\prod_Cq_C\leq Y^{1-\varepsilon}$ satisfies
\[
        \frac{S_Q(Y^{1-\varepsilon})}{S_Q(Y)}
        \sim
        (1-\varepsilon)^{s-1}Y^{-\varepsilon}
        \longrightarrow0.
\]
Since
\[
        \sum_Ct_C=\frac{\log(\prod_Cq_C)}{\log Y},
\]
these are exactly the tuples for which
$\sum_Ct_C\leq1-\varepsilon$. Their contribution therefore
becomes negligible for every fixed $\varepsilon>0$.
On the other hand, the product bound ensures that
$\sum_Ct_C\leq1$ for every tuple. Hence the sum of the
normalized logarithms tends to one in probability.

The mixed-moment calculation in
Lemma~\ref{lem:minimal-counting} proves that this measure is the
uniform probability measure on the standard simplex. The estimates
in Step~3 of the proof of
Theorem~\ref{thm:general-shape-distribution} show that the higher
inertia coordinates contribute negligibly after division by
$\log X$. Step~4 then transfers the preceding limit to the
rescaled logarithmic conductor parameters through the map
$(t_C)_C\mapsto\sum_Ct_Cv_C$. Its image is
$\mathscr P_G=\operatorname{conv}\{v_C:C\in\cM(G)\}$, and
Lemma~\ref{lem:simplex-independent} proves that the vertices are
affinely independent. Thus this map identifies the standard simplex
with $\mathscr P_G$ and carries its uniform probability measure
to the limiting measure in the distribution theorem.

\bibliographystyle{alpha}
\bibliography{references}

\end{document}